\newtheorem{theorem}{Theorem}[section]
\newtheorem*{theorem*}{Theorem}
\newtheorem{lemma}[theorem]{Lemma}
\newtheorem{corollary}[theorem]{Corollary}
\newtheorem{proposition}[theorem]{Proposition}
\theoremstyle{definition}
\newtheorem{definition}[theorem]{Definition}
\newtheorem*{definition*}{Definition}
\newtheorem{remark}[theorem]{\sc Remark}
\newtheorem{example}[theorem]{\sc Example}
\newtheorem{construction}[theorem]{\sc Construction}
\newtheorem{assumptions}{\sc Assumptions}
\newcommand{\nocontentsline}[3]{}
\newcommand{\tocless}[2]{\bgroup\let\addcontentsline=\nocontentsline#1{#2}\egroup}
\title{Properadic coformality of spheres}
\author{Coline Emprin}
\address{Coline Emprin, Department of Mathematics, Stockholm University, Albano hus 1, 106 91 \indent Stockholm, Sweden}
\email{\noindent \href{email:coline.emprin@math.su.se}{coline.emprin@math.su.se}}
\author{Alex Takeda}
\address{Alex Takeda, Department of Mathematics, Uppsala University,
Regementsv\"agen 10, 752 37 \indent Uppsala, Sweden} 
\email{\href{mailto:alex.takeda@math.uu.se}{alex.takeda@math.uu.se}}
\date{\today}
\subjclass[2020]{18M85, 16E40, 55P35, 55S35}
\keywords{Pre-Calabi--Yau algebras, formality, based loop spaces, properads}
\definecolor{red}{RGB}{230,97,0}
\definecolor{blue}{RGB}{93,58,155}
\definecolor{Chocolat}{rgb}{0.36, 0.2, 0.09}
\definecolor{BleuTresFonce}{rgb}{0.215, 0.215, 0.36}
\definecolor{BleuMinuit}{RGB}{0, 51, 102}
\definecolor{bordeau}{rgb}{0.5,0,0}
\definecolor{turquoise}{RGB}{6, 62, 62}
\definecolor{rose}{RGB}{235, 62, 124}
\definecolor{pink}{RGB}{225, 219, 240}
\newcommand{\g}{\ensuremath{\mathfrak{g}}}
\newcommand{\antishriek}{\text{\raisebox{\depth}{\textexclamdown}}}
\newcommand{\h}{\ensuremath{\mathfrak{h}}}
\newcommand{\QQ}{\ensuremath{\mathbb{Q}}}
\newcommand{\C}{\ensuremath{\mathcal{C}}}
\newcommand{\End}{\ensuremath{\mathrm{End}}}
\newcommand{\Hom}{\ensuremath{\mathrm{Hom}}}
\renewcommand{\S}{\ensuremath{\mathbb{S}}}
\newcommand{\id}{\ensuremath{\mathrm{id}}}
\newcommand{\Z}{\mathbb{Z}}
\newcommand{\R}{\mathbb{R}}
\newcommand{\calC}{\mathcal{C}}
\newcommand{\calF}{\mathcal{F}}
\newcommand{\calG}{\mathcal{G}}
\newcommand{\calL}{\mathcal{L}}
\newcommand{\calP}{\mathcal{P}}
\newcommand{\calQ}{\mathcal{Q}}
\newcommand{\calT}{\mathcal{T}}
\newcommand{\calV}{\mathcal{V}}
\newcommand{\calY}{\mathcal{Y}}
\newcommand{\Imm}{\mathop{\mathrm{Im}}\nolimits}
\newcommand{\wt}{\mathop{\mathrm{wt}}\nolimits}
\newcommand{\gr}{\mathop{\mathrm{gr}}\nolimits}
\newcommand{\verteq}{\rotatebox{90}{$\;\;=\;\;$}}
\DeclareMathSymbol{\antishrieksymbol}{\mathord}{operators}{'74}
	\DeclareRobustCommand{\antishriek}{{\mathpalette\anti@shriek\relax}}
	\newcommand\anti@shriek[2]{%
		\raisebox{\depth}{$\m@th#1\antishrieksymbol$}%
	}
\newcommand{\antish}{\antishriek}
\tikzset{>={Stealth[scale=1.2]}}
\tikzset{->-/.style={decoration={
			markings,
			mark=at position #1 with {\arrow{>}}},postaction={decorate}}}
\tikzset{-w-/.style={decoration={
			markings,
			mark=at position #1 with {\arrow{Stealth[fill=white,scale=1.4]}}},postaction={decorate}}}
\tikzset{->-/.default=0.65}
\tikzset{-w-/.default=0.65}
\tikzstyle{bullet}=[circle,fill=black,inner sep=0.5mm]
\tikzstyle{circ}=[circle,draw=black,fill=white,inner sep=0.5mm]
\tikzstyle{vertex}=[circle,draw=black,thick,inner sep=0.5mm]
\tikzstyle{dot}=[draw,circle,fill=black,minimum size=0.5mm,inner sep = 0mm, outer sep = 0mm]
\tikzset{darrow/.style={double distance = 4pt,>={Implies},->},
	darrowthin/.style={double equal sign distance,>={Implies},->},
	tarrow/.style={-,preaction={draw,darrow}},
	qarrow/.style={preaction={draw,darrow,shorten >=0pt},shorten >=1pt,-,double,double
		distance=0.2pt}}
\newcommand{\tikzfig}[1]{\begin{tikzpicture}[auto,baseline={([yshift=-.5ex]current bounding box.center)}]#1\end{tikzpicture}}
\begin{document}

\begin{abstract}
    We define a properad $\calY^{(n)}_\infty$ that encodes $n$-pre-Calabi--Yau algebras with vanishing copairing. These algebras include chains on the based loop space of any space $X$ endowed with a fundamental class $[X]$ such that $(X,[X])$ satisfies Poincaré duality of degree $n \geqslant 1$ with local system coefficients, such as an oriented manifold. Extending the notion of coformality of spaces, we define coformality of such a pair $(X,[X])$ in terms of properadic formality of $\calY^{(n)}_\infty$-algebra structures on $C_*(\Omega X)$. Using a refined version of properadic Kaledin classes, we establish the intrinsic coformality of all spheres in characteristic zero.
\end{abstract}
	
\maketitle

\makeatletter
\def\@tocline#1#2#3#4#5#6#7{\relax
	\ifnum #1>\c@tocdepth %
	\else
	\par \addpenalty\@secpenalty\addvspace{#2}%
	\begingroup \hyphenpenalty\@M
	\@ifempty{#4}{%
		\@tempdima\csname r@tocindent\number#1\endcsname\relax
	}{%
		\@tempdima#4\relax
	}%
	\parindent\z@ \leftskip#3\relax \advance\leftskip\@tempdima\relax
	\rightskip\@pnumwidth plus4em \parfillskip-\@pnumwidth
	#5\leavevmode\hskip-\@tempdima
	\ifcase #1
	\or\or \hskip 1em \or \hskip 2em \else \hskip 3em \fi%
	#6\nobreak\relax
	\hfill\hbox to\@pnumwidth{\@tocpagenum{#7}}\par%
	\nobreak
	\endgroup
	\fi}

\newcommand{\enableopenany}{%
	\@openrightfalse%
}
\makeatother
	
	\setcounter{tocdepth}{1}
	\tableofcontents

\section{\textcolor{bordeau}{Introduction}}
\noindent The notions of formality and coformality originate from rational homotopy theory, which studies spaces up to rational homotopy equivalence.
Two seminal papers on the subject, \cite{Sul77} and \cite{quillen1969rational}, study the rational homotopy type of a given simply connected topological space $X$ through the approach of minimal models, where one associates a minimal model to $X$, that is,
\begin{enumerate}
    \item a differential graded (dg) commutative algebra $S_X$, in Sullivan's approach,
    \item a dg Lie algebra $\lambda_X$, in Quillen's approach,
\end{enumerate}
such that two such spaces have the same rational homotopy type if and only if their minimal models are isomorphic, in either approach. The space $X$ is then called \emph{formal} if there is a quasi-isomorphism of dg commutative algebras $S_X \simeq H^* (X; \mathbb{Q})$, and \emph{coformal} if there is a quasi-isomorphism of dg Lie algebras $\lambda_X \simeq \pi_*(X) \otimes \mathbb{Q}$. These conditions imply that the rational homotopy type of $X$ is determined by its cohomology ring together with the cup product, when $X$ is formal, or by its rational homotopy groups together with the Whitehead bracket, when $X$ is coformal. There is a formulation of Koszul duality such that these two phenomena, formality and coformality, become dual notions \cite{berglund2014koszul}. The existence of suitable geometric structures sometimes constrains a space to be formal and/or coformal; a classical example is the fact that any compact K\"ahler manifold is formal \cite{DGMS75}.

One can phrase both notions, formality and coformality, in terms of the formality of $A_\infty$-algebras, that is, formality of a homotopy operadic algebra structure. Let us recall that more general framework, referring to \cite[Section~2]{Kaledin} for precise statements. We fix a commutative ring $R$ and consider a morphism of operads of the form
\[ \Omega \calC \to \calP, \]
where $\calC$ is a reduced weight-graded cooperad with a canonical twisting morphism to the operad
\[ \calP = \left. \calT(s^{-1}\calC^{(1)}) \middle/ d_{\Omega\calC}(s^{-1}\calC^{(2)}) \right., \]
that is, the operad freely generated by the weight one part of $\calC$, modulo relations coming from the differential of its weight two part. One particular instance of this setting is the resolution $\Omega(\calP^\antish) \to \calP$ of a Koszul operad $\calP$, in which case this morphism is a quasi-isomorphism. However, one does not need to assume this, and it is possible to work with more general $\calC$. Given any such setting $\Omega\calC \to \calP$, a $\Omega\calC$-algebra structure $\varphi$ on a chain complex of $R$-modules $A$ induces a $\calP$-algebra structure $\varphi_*$ on its homology $H = H_*(A)$, which in turn can be seen as a $\Omega\calC$-structure by pulling back.

\medskip
\noindent \textbf{\cref{def:formality}.} \cite[Definition~2.5]{Kaledin}
    A dg $\Omega\calC$-algebra $(A,\varphi)$ is said to be \emph{formal} if there is a zig-zag of quasi-isomorphisms of $\Omega\calC$-algebras
    \[ (A, \varphi) \overset{\sim}{\longleftarrow} \cdot \overset{\sim}{\longrightarrow} \cdots \overset{\sim}{\longleftarrow} \cdot \overset{\sim}{\longrightarrow} (H(A), \varphi_*) \ . \]
    A graded $\calP$-algebra $(H,\varphi_*)$ is said to be \emph{intrinsically formal} if, for any chain complex $A$ with $H_*(A)=H$ and $\Omega\calC$-algebra structure $\varphi$ on $A$ inducing $\varphi_*$ on $H$, the $\Omega\calC$-algebra $(A,\varphi)$ is formal.
\medskip

\noindent To relate this to the setting of rational homotopy theory, one sets $R = \QQ$ and takes the map $\Omega\calC \to \calP$ to be the resolution of the associative operad by the $A_\infty$ operad. Rephrasing the notion of formality, a space $X$ is formal if and only if its dg algebra of singular cochains $C^*(X, \QQ)$, with the cup product, is formal as an $A_\infty$-algebra. As for coformality, a simply connected space $X$ is coformal if and only if the dg algebra $C_*(\Omega X,\QQ)$ of chains in its based (Moore) loop space, with multiplication induced by concatenation of loops, is formal as an $A_\infty$-algebra, by a result of Saleh \cite{Sal17}. In some cases, just the homology-level structures already force formality or coformality: one says that a simply connected space $X$ is intrinsically formal (resp. intrinsically coformal) when $H^*(X,\QQ)$ (resp. $H_*(\Omega X,\QQ)$) is intrinsically formal as a graded associative algebra.

\medskip \medskip

\noindent The cooperads $\calC$ in \cref{def:formality} can be replaced by more general types of algebraic structures, such as codioperads and coproperads. In this paper, we propose to use a certain type of properadic algebra structure, in order to articulate the definition of coformality of a pair $(X,[X])$ of a space endowed with a fundamental class. This type of properadic algebra structure, which we call a $\calY^{(n)}_\infty$-algebra, is defined by a choice of integer $n$, and is given by an extension of $A_\infty$-algebra structures. In the case of our interest, we will study  $\calY^{(n)}_\infty$-algebra structures on chain complexes of the form $A = C_*(\Omega X,R)$, which can be constructed by choosing fundamental class $[X]$ of degree $n$.

The notion of a $\calY^{(n)}_\infty$-algebra structure is a close relative of the notion of a $n$ pre-Calabi--Yau algebra, or $\calV^{(n)}_\infty$-algebra, in the language of Poirier and Tradler \cite{poirier2019koszuality}. These structures are defined by a quadratic dioperad, that is, by operations with multiple inputs and outputs that are required to satisfy quadratic relations involving composition along a single input/output.
This dioperad
\[ \calV^{(n)}_\infty = \Omega  \left( \calV^{(n)\antish} \right) \]
is the cofibrant resolution of a Koszul dioperad $\calV^{(n)}$, encoding associative algebras with a compatible copairing.

The structures relevant to our setting arise from pairs $(X,[X])$ endowed with a degree $n$ fundamental class. We now describe how such a choice of fundamental class gets encoded at the level of properadic algebra structures on the complex of chains on the based loop space $A = C_*(\Omega X,R)$. To be specific, will use the definition of Poincar\'e duality \emph{with local system coefficients}, that is, we require the cap product
\[ \frown [X] \colon H^*(X,\calL) \to H_{n-*}(X,\calL) \]
to be an isomorphism for all local systems $\calL$. We call such a pair $(X,[X])$ a \emph{local Poincar\'e duality pair}. The results of \cite{holstein2024koszul} and \cite{KTV2} imply that there is a non-empty set of $n$-pre-Calabi--Yau/$\calV^{(n)}_\infty$-algebra structures $\varphi$ on $A = C_*(\Omega X,R)$ that are \emph{compatible} with $[X]$, in a sense that we make precise in \cref{def:compatibilityPreCY}.

Making a parallel with the notion of coformality of a space $X$, this suggests that one could try to formulate the notion of coformality of a local Poincar\'e duality pair $(X,[X])$ by substituting $\calV^{(n)}_\infty$ for $A_\infty$ in the setting of \cref{def:formality}. This approach does not yield a very useful notion of coformality, as shown by the following result. \medskip

\noindent \textbf{\cref{thm:notVinftyFormal}.} \textit{When $n \ge 1$, none of the $\calV^{(n)}_\infty$-algebra structures $\varphi$ on $A$ that are compatible with a degree $n$ fundamental class $[X]$ are formal}. \medskip

\noindent In this paper, we will argue that a better notion of coformality of such a pair can be obtained by looking at a slightly different dioperad. Our first observation is that, on algebras that are supported in non-negative homological degree, such as $A = C_*(\Omega X,R)$, the copairing $m_{(2)}^{0,0}$ vanishes automatically when $n \geqslant 1$. Inspired by this observation, we define a coproperad $\calC$ with a quotient map
\[ \calV^{(n)}_\infty \twoheadrightarrow \Omega \calC \]
sending the element corresponding to the copairing $m_{(2)}^{0,0}$ to zero. In other words, a $\Omega \calC$-algebra is an $n$-pre-Calabi--Yau algebra with vanishing copairing. 

We then identify this codioperad $\calC$ as the quadratic dual of a certain dioperad that we call $\calY^{(n)}$, and which we present in \cref{def:YnDioperad}. Denoting $\calY^{(n)}_\infty = \Omega(\calY^{(n)})^\antish$, we have the following characterization of the algebraic structures encoded by this dg dioperad. \medskip

\noindent \textbf{\cref{prop:isomorphismCodioperads}.} 
\textit{There is an isomorphism of codioperads $\calC \cong \calY^{(n)\antish}$, inducing a bijection between $\calY^{(n)}_\infty$-structures and pre-CY-structures with vanishing copairing.
}\medskip

\noindent Though phrased in the setting of dioperads, our constructions can be equivalently made in the language of properads. There is a universal construction $F$ that produces a properad from any dioperad, see \cite{Merkulov_2009}. We prove in \cref{prop:contractible} that the graded properad $F \calY^{(n)}$ has a quadratic dual that is a codioperad, meaning that the following notions agree:
\medskip
\begin{center}
    \emph{dioperadic $\calY^{(n)}_\infty$-algebra} $\; \longleftrightarrow \;$ \emph{properadic $\calY^{(n)}_\infty$-algebra}.
\end{center}
\medskip
\noindent Returning to the case of interest, when $A = C_*(\Omega X,R)$ and $R$ is a $\QQ$-algebra, upon picking a fundamental class $[X]$ of degree $n \ge 1$, the automatic vanishing of the copairing implies that $[X]$ defines a non-empty set of $\calY^{(n)}_\infty$-algebra structures that are compatible with $[X]$, in the sense of \cref{def:compatibilityYinfinity}. Each such structure defines a $\calY^{(n)}$-algebra structure on the homology $H_*(A) = H_*(\Omega X,R)$; we will likewise say that these are the $\calY^{(n)}$-algebra structures on $H_*(A)$ that are compatible with $[X]$. We propose to define the notions of coformality and intrinsic coformality of pairs $(X,[X])$ in the following terms.

\medskip
\noindent \textbf{Definition \ref{def:coformality}.}
    Let $(X,[X])$ be a local Poincar\'e duality pair of degree $n \geqslant 1$.  We say that the pair $(X,[X])$ is
    \begin{itemize}
        \item \emph{coformal} over $R$ when all $\calY^{(n)}_\infty$-algebra structures on $A = C_*(\Omega X,R)$ that are compatible with $[X]$ are formal as $\calY^{(n)}_\infty$-algebras, and
        \item \emph{intrinsically coformal} over $R$ when all $\calY^{(n)}$-algebra structures on $H_*(A) = H_*(\Omega X,R)$ that are compatible with $[X]$ are intrinsically formal as $\calY^{(n)}$-algebras.
    \end{itemize} 
\medskip

\noindent It follows from the definition above, together with \cite{Sal17}, that  coformality of a pair $(X,[X])$ implies coformality in the usual sense of rational homotopy theory, when $X$ is simply connected.
In order to study the formality of these $\calY^{(n)}_\infty$-algebra structures, we will use \emph{properadic Kaledin classes} or obstruction sequences for the deformation theory of properadic algebras developed by the first-named author in \cite{Kaledin,CE24b}. In \cref{sec:intermediateObstruction}, we explain how these obstruction classes get refined in the presence of a second filtration. In \cref{sec:formalityLoopSpaces}, we apply this formalism to study the case when $X$ is a sphere, leading to the following result. \medskip

\noindent \textbf{Theorem \ref{thm:sphereIntrinsicCoformality}.} \textit{For any $n \geqslant 1$, the pair $(S^n,[S^n])$ is \emph{intrinsically coformal}.
}\medskip

\noindent We currently do not know any examples of local Poincar\'e duality pairs $(X,[X])$ that are not coformal in the sense of \cref{def:coformality}, but where $X$ is a coformal space. In other words, examples where $C_*(\Omega X,R)$ is formal as an $A_\infty$-algebra but not as a $\calY^{(n)}_\infty$-algebra.

\medskip \medskip

\noindent We conclude this introduction by mentioning one motivation for defining coformality of a Poincar\'e pair in terms of $\calY^{(n)}_\infty$-algebra structures, coming from string topology. Given any path-connected space $X$, there is a quasi-isomorphism of complexes between chains on its free loop space $C_*(LX,R)$ and Hochschild chains on the algebra $A= C_*(\Omega X,R)$, described in \cite{Goo85}. The Hochschild chain complex of any algebra carries an additional differential $B$ of homological degree $+1$, known as the Connes differential. Under the equivalence above, the resulting mixed complex structure corresponds, up to homotopy, to the one coming from the natural $S^1$-action on $LX$. When $X$ is furthermore endowed with a fundamental class of dimension $n$, we get a quasi-isomorphism 
\[ CH_*(C_*(\Omega X, R)) \cong CH^{n-*}(C_*(\Omega X, R)), \]
that we can use to transfer the Gerstenhaber algebra structure from Hochschild cohomology to Hochschild homology. Combined with the operation $B$, one gets a BV-algebra structure on $HH_*(A)$, see \cite{Tr02}, which agrees with the string topology BV-algebra structure when $X$ is an oriented smooth $n$-manifold \cite{Malm}. 

This BV structure depends on more than the $A_\infty$-algebra structure on $A$, and cannot be recovered just from the Poincar\'e isomorphism at the homology level together with the cup product on cohomology; for example, a naive attempt to reconstruct the BV structure on the free loop space homology $H_*(\Lambda S^2,\mathbb{F}_2)$ starting from a strict Frobenius algebra structure on $H^*(S^2,\mathbb{F}_2)$ fails to recover the correct BV structure \cite{Men09,PoiTra23}. The definition proposed in this paper is an attempt at identifying the type of algebraic formality that would allow one to make computations at the homology level, in the style of the algebraic models proposed in \cite{rivera2023algebraic,rivera2025string}. When a certain oriented smooth manifold $(X,[X])$ is coformal in the sense of \cref{def:coformality}, we expect that the string topology operations on $X$ can be recovered just from the $\calY^{(n)}$-algebra structure on $H_*(\Omega X,R)$. The fact that spheres are formal in this sense should then be seen as the fundamental reason why the examples calculated in \cite{rivera2023algebraic} agree with the geometrically-defined loop product and coproduct for these spaces.

\medskip

\noindent \textbf{Acknowledgments:} The authors thank Alexander Berglund, Vladimir Dotsenko, Sheel Ganatra, Ezra Getzler, Geoffroy Horel, Anton Khoroshkin, Sergei Merkulov, Manuel Rivera, Bruno Vallette and Nathalie Wahl for helpful conversations. We would also like to thank the anonymous referees of this paper for their suggestions and corrections. A.T. would like to thank Uppsala University for the wonderful working environment provided. This work was supported by the Knut and Alice Wallenberg foundation, the project ANR-20-CE40-0016 HighAGT and the \'Ecole Normale Sup\'erieure.

\medskip

\noindent \textbf{Notation and conventions}. 
Throughout this paper, $R$ denotes a commutative $\QQ$-algebra, and we work with $\Z$-graded $R$-modules. We denote by $sA$ the suspension of a graded $R$-module $A$,
and use the abbreviation `dg' for the words `differential graded'. We use the notations and sign conventions of \cite{PHC} for properads; in particular, note that the degree convention for quadratic duals there disagrees with the one used in \cite{poirier2019koszuality}, for instance. Finally, when writing the arity of operations in a properad, we will use the notation (outgoing arity; incoming arity).

\section{\textcolor{bordeau}{Pre-Calabi--Yau algebras with vanishing copairing }}\label{sec:preCY}

\noindent In this section, we begin by recalling the notion of a pre-Calabi--Yau algebra and then focus on the specific case of pre-Calabi--Yau algebras with vanishing copairing.

\subsection{Pre-Calabi--Yau structures}
We start by recalling the definition of a pre-Calabi--Yau algebra structure on a graded $R$-module $A$, see \cite[Section~3]{KTV1} for more details. When $A$ is a degree-wise finite-dimensional graded vector space, this notion appears (up to signs) in \cite{TZ07} under the name of \emph{$V_\infty$-algebra} and in \cite{Sei12} under the name of \emph{boundary algebra}. For all $\ell \geqslant 1$, let us consider the space of $\ell$-higher Hochschild cochains defined by
\[ CH^*_{(\ell)}(A) \coloneqq \prod_{k_1,\dots,k_{\ell} \geqslant 0} \Hom_R \left(sA^{\otimes k_1}\otimes\dots\otimes sA^{\otimes k_\ell},A^{\otimes\ell} \right). \] This graded $R$-module is equipped with a $\Z/\ell$-action given by rotating blocks of inputs and outputs. Let us fix an integer $n$, which we call the \emph{Calabi--Yau dimension}.  We denote by 
\[CH^*_{(\ell)}(A)^{\left(\Z/\ell,n\right)}\]
the isotypic component where the generator acts by the Koszul sign (with inputs and outputs seen as elements of $sA$) times an extra factor $(-1)^{(n-1)(\ell-1)}$. Adding up all those complexes with appropriate shifts gives the \emph{tangent complex}
\[ CH^*_{[n]}(A) \coloneqq \prod_{\ell \geqslant 1} s^{(n-2)(\ell-1)}CH^*_{(\ell)}(A)^{\left(\Z/\ell,n\right)}. \] Every element in this complex decomposes as 
$m=m_{(1)}+ m_{(2)}+ m_{(3)} + \cdots$, where $m_{(\ell)}$ is a cyclically anti/symmetric collection of maps 
\[ m_{(\ell)}^{k_1, \dots, k_{\ell}} \colon sA^{\otimes k_1}\otimes\dots\otimes sA^{\otimes k_\ell} \longrightarrow A^{\otimes\ell} \]
which we  depict by a tree or by a vertex on the plane to emphasize the cyclic group action:
\[
    \begin{tikzpicture}[scale=0.45,baseline=(n.base)]
        \node (n) at (0,0.5) {};
        \coordinate (A) at (0,1);
        \draw[thin]
        (A) ++(0,-1) node[below] {$\scriptstyle{1}$} --
        (A) -- ++(-.6,1.5)
        (A) -- ++(-.3,1.5)
        (A) -- ++(.3,1.5)
        (A)++(-.1,1.5) node[above] {${\scriptstyle{k_1}}$};
        \coordinate (A) at (1.5,1);
        \draw[thin]
        (A) ++(0,-1) node[below] {$\scriptstyle{2}$} --
        (A) -- ++(-.6,1.5)
        (A) -- ++(.3,1.5)
        (A)++(-.1,1.5) node[above] {${\scriptstyle{k_2}}$};
        \draw (3.75,0) node[below] {$\scriptstyle{\cdots}$};
        \draw (3.75,1.5) node[above] {$\scriptstyle{\cdots}$};
        \coordinate (A) at (6,1);
        \draw[thin]
        (A) ++(0,-1) node[below] {$\scriptstyle{\ell}$} --
        (A) -- ++(-.6,1.5)
        (A) -- ++(-.3,1.5)
        (A) -- ++(.3,1.5)
        (A)++(-.1,1.5) node[above] {${\scriptstyle{k_{\ell}}}$};
        \draw[fill=white] (-0.3,0.5) rectangle (6.3,1);
    \end{tikzpicture}	
\qquad  ; \qquad 
\begin{tikzpicture}[scale=0.8,baseline=(n.base)]
        \node [vertex] (m) at (0,0) {$m$};
        \node (b1) at (0,-1.5) {$1$};
        \node (b2) at (-1.5,0.9) {$2$};
        \node at (0.6,0.9) {$\cdots$};
        \node (b3) at (1.5,0.9) {$\ell$};
        \draw [-w-] (m) to (b1);
        \draw [->-] (m) to (b2);
        \draw [->-] (m) to (b3);
        \draw [->-] (-.7,-1) to (m);
        \draw [->-] (-1,-0.5) to (m);
        \draw [->-] (-1.2,0) to (m);
        \draw [->-] (-0.8,1) to (m);
        \draw [->-] (-0.4,1) to (m);
        \draw [->-] (0.7,-1) to (m);
        \draw [->-] (1,-0.5) to (m);
        \draw [->-] (1.2,0) to (m);
        \draw [decorate,decoration={brace,amplitude=5pt}] (-1,-1.2) -- (-1.5,0) node[midway,auto,xshift=-0.1cm]{$k_1$};
        \draw [decorate,decoration={brace,amplitude=5pt}] (-1,1.2) -- (0,1.2) node[midway,auto,yshift=0.2cm]{$k_2$};
        \draw [decorate,decoration={brace,amplitude=5pt,mirror}] (1,-1.2) -- (1.5,0) node[midway,auto,swap,xshift=0.2cm]{$k_\ell$};
    \end{tikzpicture} \qquad \ . 
\]\normalsize  
In the case of a vertex, we denote the first output with a white arrowhead label, and up to sign the $\Z/\ell$-action is given by moving that label around.

\begin{proposition}[{\cite[Proposition~10]{KTV1}}]
    For any $n \in \Z$ and $\ell_1,\ell_2 \geqslant 1$, there is a binary operation called the \emph{necklace product}
    \[ \underset{\mathrm{nec}}{\circ} \colon CH^*_{(\ell_1)}(A)^{\left(\Z/\ell_1,n\right)} \otimes CH^*_{(\ell_2)}(A)^{\left(\Z/\ell_2,n\right)} \to s^{-1}CH^*_{(\ell_1+\ell_2-1)}(A)^{\left(\Z/(\ell_1+\ell_2-1),n\right)}, \]
    which turns the tangent complex into a Lie-admissible algebra  
    \[\mathfrak{g}^{(n)}_{A} \coloneqq \left(s CH^*_{[n]}(A) \ , \underset{\mathrm{nec}}{\circ} \ \right). \]
\end{proposition}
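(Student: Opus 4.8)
The plan is to give $\underset{\mathrm{nec}}{\circ}$ by an explicit formula, verify that it is well defined on the cyclic isotypic components and has the stated degree shift, and then deduce Lie-admissibility from a sign-careful inspection of the associator. Concretely, picture an element of $CH^*_{(\ell_1)}(A)$ of a fixed multi-arity as a disc whose boundary circle carries $\ell_1$ outputs, with $k_i$ inputs sitting on the arc between the $i$-th and $(i{+}1)$-st output, and similarly for $CH^*_{(\ell_2)}(A)$; I would define $f\underset{\mathrm{nec}}{\circ}g$ as the signed sum, over every input slot of $f$ and every output of $g$, of the decorated disc obtained by cutting the circle of $g$ at the chosen output, cutting the chosen arc of $f$ at the chosen input slot, gluing that $A$-output of $g$ into that $sA$-input via $A\cong s^{-1}sA$, and reading off the new disc — which now has $\ell_1+\ell_2-1$ outputs. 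The sign is the Koszul sign from moving the tensor factors of $g$ past those of $f$, together with the conventional suspension factors. Extending bilinearly over all $\ell_1,\ell_2$ gives the operation on $CH^*_{[n]}(A)$, and hence on $\mathfrak{g}^{(n)}_A$; the shifts $s^{(n-2)(\ell-1)}$ in the tangent complex are chosen precisely so that this bookkeeping is additive in $\ell$ and the induced bracket acquires the expected degree, the single $s^{-1}$ recording the suspension consumed in the gluing. (One can also try to realize $\mathfrak{g}^{(n)}_A$ as a convolution Lie-admissible algebra $\Hom(\mathcal{C},\End_A)$ for a suitable coproperad $\mathcal{C}$ and quote a general principle, but the explicit route is more self-contained and is the one I would carry out.)

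For well-definedness the only real point is that $f\underset{\mathrm{nec}}{\circ}g$ lands in the $\bigl(\Z/(\ell_1+\ell_2-1),n\bigr)$-isotypic component. This is where summing over \emph{all} insertion positions is essential: advancing the distinguished (white-arrowhead) output of the result by one step permutes the summands, invoking the $\Z/\ell_1$-equivariance of $f$ when the label passes an output that came from $f$ and the $\Z/\ell_2$-equivariance of $g$ when it passes one that came from $g$; the two cyclic signs multiply, and $(n-1)(\ell_1-1)+(n-1)(\ell_2-1)=(n-1)(\ell_1+\ell_2-2)$ is exactly the cyclic sign required in arity $\ell_1+\ell_2-1$. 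That the output lies in $s^{-1}CH^*_{(\ell_1+\ell_2-1)}(A)$ is then immediate from the single suspension spent in feeding an $A$-output into an $sA$-input.

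For Lie-admissibility it suffices to show that the total Koszul-signed antisymmetrization over $\S_3$ of the associator $\mathrm{as}(f,g,h)=(f\underset{\mathrm{nec}}{\circ}g)\underset{\mathrm{nec}}{\circ}h-f\underset{\mathrm{nec}}{\circ}(g\underset{\mathrm{nec}}{\circ}h)$ vanishes, since this is equivalent to graded Jacobi for the (anti)commutator bracket. Expanding both composites into sums of composed discs, the summands of $(f\circ g)\circ h$ are those where $h$ is spliced onto an arc of $f$ ($g$ and $h$ both plugged into $f$) and those where $h$ is spliced onto an arc of $g$ ($h$ plugged into $g$ plugged into $f$); the summands of $f\circ(g\circ h)$ are this second "path" type together with ones where one output of $h$ is plugged into $f$ and another output of $h$ is plugged into $g$. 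The path terms match and cancel in the associator, leaving $\mathrm{as}(f,g,h)$ as the difference of two shapes: "$g$ and $h$ both into $f$" and "$h$ into both $f$ and $g$". Now each of these two shapes is invariant, up to the appropriate Koszul sign, under the transposition exchanging its two free arguments ($g\leftrightarrow h$ for the first, $f\leftrightarrow g$ for the second — here using that one sums over all positions around the necklaces). Consequently, in $\sum_{\sigma\in\S_3}\pm\,\mathrm{as}(x_{\sigma 1},x_{\sigma 2},x_{\sigma 3})$ the first-shape contributions cancel in pairs and likewise the second-shape ones, so the sum vanishes. Since in general neither shape is symmetric before antisymmetrization, the product is Lie-admissible but not pre-Lie — in contrast with the $\ell=1$ part, which reduces to the Gerstenhaber $\circ_i$-product and is honestly pre-Lie.

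\textbf{Main obstacle.} Everything lies in the signs. The suspension isomorphisms on inputs and outputs, the cyclic factors $(-1)^{(n-1)(\ell-1)}$, and the Koszul rule all have to be propagated simultaneously through the relabelling bijections above. The crux is to check that, under the bijection matching a "$g,h$ into $f$" term of $(f\circ g)\circ h$ with a "$h,g$ into $f$" term of $(f\circ h)\circ g$, the accumulated sign is exactly $-(-1)^{|g||h|}$; that the analogous statement holds for the "into both" shape; and that the path terms of $(f\circ g)\circ h$ and of $f\circ(g\circ h)$ carry equal signs. With the conventions of \cite{PHC} pinned down these are finite verifications, but they are the whole substance of the proposition.
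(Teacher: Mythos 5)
This proposition is imported verbatim from \cite[Proposition~10]{KTV1}; the paper supplies no proof of its own, so your proposal can only be measured against the argument in that reference, and it does follow the same strategy. Your construction of $\underset{\mathrm{nec}}{\circ}$ by cutting and gluing discs, the equivariance check obtained by letting the distinguished output travel through the $f$-blocks and the $g$-block (with the sign count $(n-1)(\ell_1-1)+(n-1)(\ell_2-1)=(n-1)(\ell_1+\ell_2-2)$), and the Lie-admissibility argument --- the path terms cancel inside the associator, and each of the two surviving shapes (``$g,h$ both into $f$'' and ``$h$ into both $f$ and $g$'') is graded-symmetric in two of its three slots, so the signed $\S_3$-antisymmetrization vanishes --- is exactly the structure of the proof in \emph{op.\ cit.}, and the observation that the product is Lie-admissible but not pre-Lie (in contrast with the $\ell=1$ Gerstenhaber part) is correct. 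Two caveats. First, you defer all sign and slot bookkeeping (including the assignment of the seam angles created when the $g$-disc is spliced into an angle of $f$, which is needed for the path terms of the two associations to match term by term); you are right that this is where the actual content lies, so as written the proposal is a correct skeleton rather than a complete proof. Second, summing over \emph{all} outputs of $g$ rather than gluing a single distinguished output rescales each bilinear component by $\ell_2$; this is harmless over a $\QQ$-algebra and does not break Lie-admissibility (both associations still produce the sum over the same four gluing data in the path terms), but it makes your operation differ from the one in \cite{KTV1} by arity-dependent scalars, which would matter for the subsequent identification of $\mathfrak{g}^{(n)}_{(A,\delta)}$ with the convolution algebra of $\calV^{(n)\antish}$ and in the characteristic-two arguments later in the paper. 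Pin the convention to a single distinguished output of $g$ if you want to reproduce the cited operation on the nose.
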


\begin{remark}\label{Hochschild}
    The Hochschild cochain complex of $A$ is a subcomplex of $CH^*_{[n]}(A)$, for any $n$, corresponding to the summand $\ell =1$. The skew symmetrization of the necklace product
    \[ \left[m,m' \right]_{\mathrm{nec}} \coloneqq  m \underset{\mathrm{nec}}{\circ} m' - (-1)^{(|m|-1)(|m'|-1)} m' \underset{\mathrm{nec}}{\circ} m, \]
    defines a Lie bracket extending the Gerstenhaber bracket.  
\end{remark}

\noindent This leads to the following definition due to \cite{poirier2019koszuality} and \cite[Definition~23]{KTV1}.

\begin{definition}
    An \emph{$n$-pre-Calabi--Yau} algebra structure on $A$ is a Maurer--Cartan element \[m \in \mathrm{MC}\left(\mathfrak{g}^{(n)}_{A}\right)\] whose component with zero inputs and one output vanishes, i.e. \[m \underset{\mathrm{nec}}{\circ} m = 0 \quad \mbox{and} \quad  m_{(1)}^0 = 0 \ . \] 
\end{definition}

\begin{remark}
    The component $m_{(1)}$ of an $n$-pre-Calabi--Yau algebra is an $A_\infty$-algebra structure on $A$. Similarly, $m_{(2)}$ is a noncommutative Poisson bivector up to a $[m_{(1)},-]$-exact term since \[ m_{(2)} \underset{\mathrm{nec}}{\circ} m_{(2)} = [m_{(1)},m_{(3)}] \ .\]  
\end{remark}

\noindent It will sometimes be convenient to fix the differential on $A$, $\delta \coloneqq m^{1}_{(1)} \ , $ and look at pre-CY structures extending it. The dg Lie-admissible algebra characterizing those structures is the following one.

\begin{definition}\label{def:dgLieConvolution}
    For any differential $\delta$ on $A$, we define a dg Lie-admissible algebra
    \[ \mathfrak{g}^{(n)}_{(A,\delta)} = \left(s\left(\prod_{r \geqslant 2} \Hom(sA^{\otimes r},A) \times \prod_{\ell \geqslant 2} s^{(n-2)(\ell-1)}CH^*_{(\ell)}(A)^{\left(\Z/\ell,n\right)}\right), \partial, \underset{\mathrm{nec}}{\circ} \right), \]
    where the differential $\partial$ is induced by the internal differential $\delta$ on $A$. An $n$-pre-CY structure extending $\delta$ is a Maurer--Cartan element \[m \in \mathrm{MC}\left(\mathfrak{g}^{(n)}_{(A,\delta)} \right) \ .\] 
\end{definition}

\begin{remark}
    If one drops the requirement that the component with zero inputs and one output vanishes, one gets a definition of a \emph{curved} pre-CY algebra.  This more general notion has been recently shown by Leray and Vallette \cite{LV22} to be equivalent to a curved homotopy version of the notion of double Poisson algebra of \cite{VdB08}. 
\end{remark}

\subsection{The dioperad $\calV_\infty$} Pre-CY algebras were originally named $\calV_\infty$-algebras in \cite{TZ07}; as this name suggests, pre-CY algebras appear as a homotopical version of $\calV$-algebras, which are associative algebras with a compatible copairing. This perspective is explained in detail in \cite{poirier2019koszuality}, using the formalism of Koszul duality for dioperads. 

\begin{definition}[{\cite[Definition~3.1]{PoiTra23}}] For every integer $n$, the $\calV^{(n)}$-dioperad is defined by the following presentation, with two generators $\mu$ and $\nu$ in degree $0$ and $-n$ respectively,
    \[ \calV^{(n)} \coloneqq \frac{\calT\left(\mu =
    \begin{tikzpicture}[baseline=0ex,scale=0.2]
        \draw (0,2) node[above] {$\scriptstyle{1}$}
        -- (2,1);
        \draw (4,2) node[above] {$\scriptstyle{2}$}
        -- (2,1) -- (2,0) node[below] {$\scriptstyle{1}$};
        \draw[fill=white] (2,1) circle (10pt);
    \end{tikzpicture}
    ~ ; ~ \nu = 
    \begin{tikzpicture}[baseline=0ex,scale=0.2]
        \draw (0,0)  node[below] {$\scriptstyle{1}$} -- (0,0.5);
        \draw (2,0) node[below] {$\scriptstyle{2}$} -- (2,0.5);
        \draw[fill=white] (-0.3,0.5) rectangle (2.3,1);
    \end{tikzpicture} ~ = ~
    \begin{tikzpicture}[baseline=0ex,scale=0.2]
        \draw (0,0)  node[below] {$\scriptstyle{2}$} -- (0,0.5);
        \draw (2,0) node[below] {$\scriptstyle{1}$} -- (2,0.5);
        \draw[fill=white] (-0.3,0.5) rectangle (2.3,1);
    \end{tikzpicture}
    \right)}
    {\left(\begin{tikzpicture}[baseline=0.5ex,scale=0.2]
        \draw (0,4) node[above] {$\scriptstyle{1}$} --(4,0)
        -- (4,-1) node[below] {$\scriptstyle{1}$};
        \draw (4,4) node[above] {$\scriptstyle{2}$} -- (2,2);
        \draw (8,4) node[above] {$\scriptstyle{3}$}-- (4,0);
        \draw[fill=white] (2,2) circle (10pt);
        \draw[fill=white] (4,0) circle (10pt);
    \end{tikzpicture}
    -
    \begin{tikzpicture}[baseline=0.5ex,scale=0.2]
        \draw (0,4) node[above] {$\scriptstyle{1}$}
        -- (4,0) -- (4,-1) node[below] {$\scriptstyle{1}$};
        \draw (4,4) node[above] {$\scriptstyle{2}$} -- (6,2);
        \draw (8,4) node[above] {$\scriptstyle{3}$} -- (4,0);
        \draw[fill=white] (6,2) circle (10pt);
        \draw[fill=white] (4,0) circle (10pt);
    \end{tikzpicture}
    ~;~
    \begin{tikzpicture}[scale=0.2,baseline=1ex]
        \draw (0,4) node[above] {$\scriptstyle{1}$} -- (0,2) -- (1,0.5) -- (1,-0.5) node[below] {$\scriptstyle{1}$};
        \draw (4,-0.5) node[below] {$\scriptstyle{2}$} -- (4,2) ;
        \draw (2,2) -- (1,0.5);
        \draw[fill=white] (1,0.5) circle (10pt);
        \draw[fill=white] (1.7,2) rectangle (4.3,2.5);
    \end{tikzpicture}
    ~ - ~
    \begin{tikzpicture}[scale=0.2,baseline=1ex]
        \draw (0,4) node[above] {$\scriptstyle{1}$} -- (0,2) -- (-1,0.5) -- (-1,-0.5) node[below] {$\scriptstyle{2}$};
        \draw (-4,-0.5) node[below] {$\scriptstyle{1}$} -- (-4,2) ;
        \draw (-2,2) -- (-1,0.5);
        \draw[fill=white] (-1,0.5) circle (10pt);
        \draw[fill=white] (-1.7,2) rectangle (-4.3,2.5);
    \end{tikzpicture} \right)} \ .\]
\end{definition}

\noindent A $\calV^{(n)}$-algebra structure is thus determined by a binary product $\mu$ and a symmetric copairing $\nu$ of degree $-n$, satisfying
\[ \mu(x,\nu') \otimes \nu'' = (-1)^{n|x|} \nu' \otimes \mu(\nu'',x) \]
for all $x \in A$, where we used the Sweedler notation ``$\nu = \nu' \otimes \nu''$''. The results of Poirier and Tradler can be rephrased in the following way.

\begin{theorem}[{\cite[Theorem~1.2 and Proposition~3.4]{poirier2019koszuality}}] \leavevmode

\begin{enumerate}
    \item The dioperad $\calV^{(n)}$ is Koszul and we denote $\calV^{(n)}_\infty \coloneqq \Omega \calV^{(n) \antish}$. 
    \item  For any chain complex $(A,\delta)$, there is an isomorphism of dg Lie-admissible algebras
    \[ \mathfrak{g}^{(n)}_{\left(A,\delta \right)} \cong \left(s\Hom_{\S}\left(\calV^{(n)\antish}, \End_A \right), \partial, \star \right) \]
    between the dg Lie-admissible algebra of \cref{def:dgLieConvolution} and the convolution algebra governing $\calV^{(n)}_\infty$-algebra structures on $(A,\delta)$.
\end{enumerate}
\end{theorem}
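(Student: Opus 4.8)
\emph{Overview.} The statement is a reformulation of \cite[Theorem~1.2, Proposition~3.4]{poirier2019koszuality} in the conventions of \cite{PHC} and \cite{KTV1}, so the plan is to reconstruct it along those lines: first prove Koszulness and extract an explicit description of the Koszul dual codioperad $\calV^{(n)\antish}$, then read off the convolution dg Lie-admissible algebra and match it with $\mathfrak{g}^{(n)}_{(A,\delta)}$.

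\emph{Part (1): Koszulness.} The presentation of $\calV^{(n)}$ is quadratic (each relation involves exactly two generators among $\mu,\nu$), so one may attack Koszulness either by proving acyclicity of the Koszul complex $\calV^{(n)\antish}\circ\calV^{(n)}$, or by exhibiting a quadratic Gröbner (equivalently, Poincaré--Birkhoff--Witt) basis for the dioperad. For the latter I would order $\mu>\nu$, orient associativity towards right-combed planar $\mu$-trees, and orient the $\mu$--$\nu$ compatibility so as to push a $\nu$-vertex in a fixed direction along its strand past any $\mu$-vertex; one then checks that the monomials (genus-zero graphs in $\mu,\nu$) that are irreducible for this rewriting system are exactly the \emph{necklace graphs} --- a cyclic arrangement of $\ell$ right-combed $\mu$-corollas glued along a single $\nu$-cycle built from $\ell-1$ copies of $\nu$ --- and that the three- and four-vertex critical pairs are confluent. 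This simultaneously proves $\calV^{(n)}$ Koszul and identifies the underlying $\S$-bimodule of $\calV^{(n)\antish}$ with the span of necklace graphs, equipped with the degree shift $s^{(n-2)(\ell-1)}$ coming from the $\ell-1$ vertices $\nu$ of degree $-n$ together with the operadic suspension in $(-)^{\antish}$, and with the $\Z/\ell$-action in which the generator acts through the Koszul sign of the reordered $sA$-slots times $(-1)^{(n-1)(\ell-1)}$, this last factor being produced by the symmetry of $\nu$ interacting with the suspensions.

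\emph{Part (2): the convolution algebra.} Given this description, the underlying graded module of $\Hom_{\S}(\calV^{(n)\antish},\End_A)$ decomposes, summand by summand over necklace shapes, as $\prod_{\ell\geqslant 1} s^{(n-2)(\ell-1)}CH^*_{(\ell)}(A)^{(\Z/\ell,n)}$, which is exactly the module underlying $\mathfrak{g}^{(n)}_{(A,\delta)}$ of \cref{def:dgLieConvolution} (the restrictions to $r\geqslant 2$ and $\ell\geqslant 2$ reflect that $\calV^{(n)\antish}$ is reduced and that $\Omega$ is built on the coaugmentation coideal). Under this identification I would verify two things. First, the convolution differential $\partial=[d_{\End_A},-]$ depends only on the internal differential $d_{\End_A}$ induced by $\delta$ --- since $\calV^{(n)\antish}$ carries no internal differential --- and so coincides with the $\partial$ of \cref{def:dgLieConvolution}. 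Second, the convolution product $\star$, induced by the codioperadic decomposition on $\calV^{(n)\antish}$ and composition in $\End_A$, equals the necklace product $\underset{\mathrm{nec}}{\circ}$; it is enough to compare the two binary operations on cogenerators, since both $\star$ and $\underset{\mathrm{nec}}{\circ}$ are determined by how a necklace graph decomposes into two necklace subgraphs, and the single two-vertex necklace decompositions of $\calV^{(n)}$ record precisely which strand of one vertex is plugged into which slot of the other --- the combinatorics of the necklace product of \cite[Proposition~10]{KTV1}.

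\emph{Main obstacle.} The real work, and the step I expect to be delicate, is the sign bookkeeping: reconciling the Koszul-sign and suspension conventions for quadratic duals of \cite{PHC} --- which, as the paper notes, differ from those of \cite{poirier2019koszuality} --- with the signs built into the necklace product and the isotypic twist $(-1)^{(n-1)(\ell-1)}$ of \cite{KTV1}. One must track the Koszul signs from permuting $sA$- and $A^{\otimes}$-slots across the $\nu$-cycle, the signs from the degree $-n$ of each $\nu$, and the operadic suspension signs in $\Omega\big((-)^{\antish}\big)$, and confirm their product is the sign in the formula for $\underset{\mathrm{nec}}{\circ}$. A clean way to organise this is to fix once and for all a planar representative for each necklace graph and each two-vertex decomposition, carry out the computation for $\ell_1=\ell_2=1$ and for a single insertion of a $\ell_2=1$ vertex into a general $\ell_1$-necklace (these generate everything under cyclic symmetry and coassociativity), and then invoke $\Z/\ell$-equivariance to conclude.
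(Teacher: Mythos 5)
This statement is imported verbatim from the literature: the paper offers no proof, only the citation to Poirier--Tradler, so there is no internal argument to measure yours against. On its own terms, your outline has the right architecture --- establish Koszulness of the quadratic presentation, identify $\calV^{(n)\antish}$ with shifted, cyclically twisted necklace-type elements, then match the convolution product against $\underset{\mathrm{nec}}{\circ}$ summand by summand --- and your Part (2), including the remarks on reducedness and the coaugmentation coideal explaining the restrictions $r\geqslant 2$, $\ell \geqslant 2$ in \cref{def:dgLieConvolution}, is essentially correct bookkeeping once Part (1) is in hand.

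The genuine gap is in Part (1). You invoke a quadratic Gr\"obner/PBW criterion ``for the dioperad'' as though it were off-the-shelf, but that technology (Dotsenko--Khoroshkin shuffle bases, rewriting confluence implying Koszulness) is established for operads, not for dioperads; for dioperads one must instead use Gan's distributive-law criterion, an explicit contraction of the Koszul complex, or develop the rewriting theory from scratch. Worse, the confluence check you defer to a parenthetical is exactly where the difficulty lives: the $\mu$--$\nu$ relation composes an output of the arity-$(2;0)$ generator into an input of $\mu$ in two planar positions, and the critical pairs involving two copairings and a product have no operadic analogue --- that this combinatorics is delicate is corroborated by the fact, used later in this very paper, that the associated properad $F\calV^{(n)}$ fails to be Koszul contractible. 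There is also a smaller slip of logic: the irreducible monomials of a rewriting system on $\calV^{(n)}$ form a basis of $\calV^{(n)}$ itself, not of $\calV^{(n)\antish}$; the necklace basis of the dual comes from the complementary (dual) presentation, so your sentence identifies the wrong object even though the conclusion about $\calV^{(n)\antish}$ is the correct one. None of this is fatal --- the cited reference supplies the missing proof --- but as a self-contained argument your proposal asserts, rather than establishes, the one nontrivial claim in the theorem.
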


\subsection{Pre-CY algebras with vanishing copairing}

In this paper, we will restrict our attention to the following specific type of pre-CY structure.

\begin{definition}
    An $n$-pre-CY algebra structure \emph{with vanishing copairing} on $A$ is an $n$-pre-CY structure on $A$ such that its component with zero inputs and two outputs vanishes, i.e. 
    \[m^{0,0}_{(2)}=0 \ .\] Such a structure is given by operations
    \[\begin{array}{lllllll}
        m^{i\geqslant 1}_{(1)} & \colon &  sA^{\otimes i} \to s^2A & \quad \quad \quad& && \medskip \\
        m^{1,0}_{(2)} &\colon &  sA \to s^n A^{\otimes 2}& \quad \quad \quad& m^{1,1}_{(2)}&\colon & sA \otimes sA \to s^n A^{\otimes 2} \quad \dots \medskip \\
        m^{0,0,0}_{(3)} &\colon & R \to s^{2n-2} A^{\otimes 3} & & m^{1,0,0}_{(3)} &\colon & sA \to s^{2n-2}A^{\otimes 3}\quad \dots
    \end{array}\] 
    The other operations of the same arities are determined by symmetry. For instance, we have 
    \[m^{0,1}_{(2)} = (-1)^{n-1} \tau \circ m^{1,0}_{(2)} \ ,\]
    where $\tau$ exchanges factors, with the Koszul sign given by seeing them as elements of $sA$. 
\end{definition}

\begin{example}\label{prop:connectiveVanishing}
    If $A$ is connective, that is supported in non-negative homological degree, and $n \geqslant 1$, then any $n$-pre-CY structure on $A$ has vanishing copairing.
\end{example}

\noindent In the $\calV_\infty$ perspective, we have the following equivalent characterization. Let us denote by $\calC$ the graded $\S$-bimodule given by the quotient $\calV^{(n)\antish} \twoheadrightarrow \C$
that kills the component of arity $(2;0)$.
\begin{lemma}
    The codioperad structure of $\calV^{(n)\antish}$ induces a codioperad structure on $\C$.
\end{lemma}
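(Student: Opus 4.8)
The plan is to prove that the kernel $I$ of the projection $\calV^{(n)\antish} \twoheadrightarrow \C$ is a co-ideal of the codioperad $\calV^{(n)\antish}$: once this is established, $\C$ acquires a unique codioperad structure for which the projection is a morphism of codioperads, and the co-augmentation, the counit, coassociativity, and conilpotence are all inherited from $\calV^{(n)\antish}$ by surjectivity of the projection (in particular $\C$ is again a coaugmented conilpotent codioperad, so that the cobar construction can later be applied to it). By construction $I$ is the arity $(2;0)$ component of $\calV^{(n)\antish}$; being a full arity component it is automatically a sub-$\S$-bimodule, and it is killed by the co-augmentation $\calV^{(n)\antish} \to \mathbf{1}$ since the latter is concentrated in arity $(1;1)\neq(2;0)$. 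Hence the only thing left to check is that the decomposition map of $\calV^{(n)\antish}$ sends $I$ into the part of $\calV^{(n)\antish} \boxtimes \calV^{(n)\antish}$ having a tensor factor in $I$.

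To analyse this, I would first use that $\calV^{(n)}$ is a quadratic dioperad whose space of generators is concentrated in arities $(1;2)$ and $(2;0)$ (occupied by $\mu$ and $\nu$), so that its Koszul dual $\calV^{(n)\antish}$ is a sub-codioperad of the cofree codioperad on the cogenerators; in particular it is graded by \emph{weight}, the number of internal vertices of the underlying genus-zero graphs, with weight-zero part the unit $\S$-bimodule $\mathbf{1}$ and weight-one part the cogenerators $s E$. An elementary arity count then determines the arity $(2;0)$ part: a genus-zero graph with $a$ vertices labelled by $\mu$ and $b$ labelled by $\nu$ has $a+b-1$ internal edges, hence $2a-(a+b-1)$ free inputs and $(a+2b)-(a+b-1)$ free outputs; setting these equal to $0$ and $2$ respectively forces $b=1$ and $a=0$. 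Thus the arity $(2;0)$ part of the cofree codioperad, and a fortiori $I=\calV^{(n)\antish}(2;0)$, is spanned over $\S_2$ by the single $\nu$-corolla and is concentrated in weight one.

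Next I would use that the reduced decomposition $\overline{\Delta}$ of $\calV^{(n)\antish}$ splits a weight-$w$ element into two pieces of weights $w_1,w_2\geqslant 1$ with $w_1+w_2=w$; it therefore vanishes identically on weight-one elements, so $\overline{\Delta}(I)=0$. Consequently the full decomposition of an element of $I$ reduces to its two trivial terms (that element tensored, in either order, with units of the $\S$-bimodule), each of which has a tensor factor in $I$; this is exactly the co-ideal condition. It follows formally that the composite $\calV^{(n)\antish} \xrightarrow{\Delta} \calV^{(n)\antish}\boxtimes\calV^{(n)\antish} \to \C\boxtimes\C$ vanishes on $I$, hence factors through $\C$ and defines the sought decomposition on $\C$, completing the proof.

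I expect the main obstacle to be bookkeeping rather than mathematics: making precise, with the sign and suspension conventions of \cite{PHC}, both the weight grading on the Koszul dual codioperad and the exact shape of the trivial terms appearing in the non-reduced decomposition, and making sure the arity count genuinely rules out every genus-zero graph of arity $(2;0)$ other than the $\nu$-corolla, which relies crucially on there being no generator of arity $(1;0)$ or $(1;1)$.
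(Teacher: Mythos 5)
Your argument is correct, but it reaches the coideal condition by a genuinely different route than the paper. The paper's proof is a direct arity bookkeeping on the infinitesimal decomposition: the only way to split an arity $(2;0)$ element into two factors neither of which lies in the kernel would require a factor of arity $(1;0)$, and that component of $\calV^{(n)\antish}$ vanishes, so every term of the decomposition already has a tensor factor in the kernel. You instead identify the kernel itself: your tree count (free inputs $= a-b+1$, free outputs $= b+1$ for a genus-zero graph with $a$ copies of $\mu$ and $b$ copies of $\nu$) is correct and forces the arity $(2;0)$ component of the cofree codioperad, hence of $\calV^{(n)\antish}$, to be the weight-one $\nu$-corolla, so the reduced decomposition vanishes on it for weight reasons. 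Your version proves slightly more --- the kernel consists of primitives, $\overline{\Delta}(I)=0$, rather than merely satisfying the coideal condition --- at the cost of having to pin down the $(2;0)$ component explicitly; the paper's version needs only the vanishing of the $(1;0)$ component and no knowledge of what actually lives in arity $(2;0)$. Both arguments establish the lemma.
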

\begin{proof}
    This follows from the fact that the infinitesimal decomposition morphism of $\calV^{(n)\antish}$ is compatible with the quotient. The only term that could cause a problem is the decomposition of an arity $(2;0)$ operation into an arity $(2;1)$ operation and an arity $(1;0)$ operation; this does not occur since the arity $(1;0)$ component of $\calV^{(n) \antish}$ vanishes.
\end{proof}

\begin{proposition}
    There is a natural bijection between the set of $n$-pre-CY algebra structures with vanishing copairing on a chain complex $(A,\delta)$ and the set of $\Omega \C$-algebra structures on the same chain complex. In other words, $n$-pre-CY structures with vanishing copairing are controlled by the dg Lie algebra
    \[\left(s\Hom_{\S}(\calC, \End_A), \partial, [,] \right).\]
\end{proposition}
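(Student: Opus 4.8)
The plan is to deduce the proposition from the preceding lemma together with the general Koszul-duality machinery for dioperads, as set up in \cite{PHC} and \cite{poirier2019koszuality}. The starting point is the isomorphism of dg Lie-admissible algebras
\[ \mathfrak{g}^{(n)}_{(A,\delta)} \cong \left(s\Hom_{\S}\left(\calV^{(n)\antish}, \End_A\right), \partial, \star\right) \]
from \cite[Theorem~1.2 and Proposition~3.4]{poirier2019koszuality}, under which Maurer--Cartan elements correspond precisely to $\Omega\calV^{(n)\antish} = \calV^{(n)}_\infty$-algebra structures extending $\delta$. I would first observe that the quotient map of $\S$-bimodules $\calV^{(n)\antish} \twoheadrightarrow C$ induces, by precomposition, an inclusion of graded $R$-modules
\[ s\Hom_{\S}(C, \End_A) \hooklongrightarrow s\Hom_{\S}\left(\calV^{(n)\antish}, \End_A\right), \]
whose image is exactly the subspace of convolution elements that vanish on the arity $(2;0)$ component. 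Under the above isomorphism, this is precisely the condition $m^{0,0}_{(2)} = 0$, i.e. the vanishing-copairing condition.

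The next step is to check that this inclusion is compatible with both the differential $\partial$ and the Lie-admissible product $\star$, so that $s\Hom_{\S}(C,\End_A)$ is a dg Lie-admissible \emph{subalgebra}. Compatibility with $\partial$ is automatic since $\partial$ is induced by $\delta$ on $\End_A$ and does not change arity. Compatibility with $\star$ is exactly the content of the preceding lemma: the product $\star$ is built from the infinitesimal decomposition map of $\calV^{(n)\antish}$, and the lemma says this map descends to $C$; dually, the subspace of elements vanishing on the arity $(2;0)$ part is closed under $\star$, because the only way $\star$ could produce a nonzero arity $(2;0)$ output from inputs supported in other arities would require decomposing an arity $(2;0)$ cooperation into an arity $(2;1)$ and an arity $(1;0)$ piece, and the arity $(1;0)$ component of $\calV^{(n)\antish}$ vanishes. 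Hence $\left(s\Hom_{\S}(C,\End_A), \partial, \star\right)$ is a well-defined dg Lie-admissible algebra and the inclusion above is a morphism of such.

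Finally, since $\Omega C$ is by definition the cobar construction on the codioperad $C$, its algebra structures on $(A,\delta)$ are governed by the convolution dg Lie-admissible algebra $\left(s\Hom_{\S}(C,\End_A),\partial,\star\right)$, and $\Omega C$-algebra structures correspond bijectively to its Maurer--Cartan elements. Under the inclusion into $\mathfrak{g}^{(n)}_{(A,\delta)}$ these are identified with the Maurer--Cartan elements $m$ of $\mathfrak{g}^{(n)}_{(A,\delta)}$ satisfying $m^{0,0}_{(2)} = 0$, i.e. with $n$-pre-CY structures with vanishing copairing extending $\delta$. Letting $\delta$ vary gives the stated bijection for all $n$-pre-CY structures with vanishing copairing, and the final sentence of the proposition is just the identification of the controlling dg Lie-admissible algebra.

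The only genuinely delicate point is verifying that the subspace $s\Hom_{\S}(C,\End_A)$ is closed under $\star$ — equivalently, that the necklace product of two convolution elements with vanishing arity $(2;0)$ part again has vanishing arity $(2;0)$ part. This is a purely combinatorial check on which cooperations of $\calV^{(n)\antish}$ can appear in the relevant graft, and it is dual to the lemma already proved; I would phrase it exactly that way rather than re-examining the necklace product formula directly.
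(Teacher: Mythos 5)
Your proposal is correct and is essentially the paper's argument seen from the dual side: the paper applies $\Omega$ to the surjection of codioperads $\calV^{(n)\antish} \twoheadrightarrow \C$ to get a surjection of dg dioperads $\calV^{(n)}_\infty \twoheadrightarrow \Omega \C$, which exhibits $\Omega\C$-algebras as exactly the $\calV^{(n)}_\infty$-algebras with vanishing copairing, whereas you precompose with the same surjection to realize $s\Hom_{\S}(\C,\End_A)$ as a dg Lie-admissible subalgebra of $\mathfrak{g}^{(n)}_{(A,\delta)}$ and match Maurer--Cartan elements. Your extra check that the arity-$(2;0)$-vanishing subspace is closed under $\star$ is just the convolution-algebra shadow of the preceding lemma (the only dangerous decomposition involves the vanishing arity $(1;0)$ component), so the argument goes through.
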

\begin{proof}
    The  surjection $\calV^{(n)\antish} \twoheadrightarrow C$ induces a surjection of dg dioperads \[ \Omega \calV^{(n)^\antish} \twoheadrightarrow \Omega \calC \ .\] This exhibits the set of $\Omega \calC$-algebra structures as the subset of $\calV^{(n)}_\infty$-algebra structures with vanishing copairing.
\end{proof}

\subsection{The dioperad $\calY^{(n)}$}
We now characterize the dioperad $\calC$ as the quadratic dual of a certain dioperad. For that, we look at the lowest-arity non-trivial operations of a pre-CY algebra with vanishing copairing.
We denote by
\[ V = R \cdot \big(\id + (231) + (312)\big) \oplus R \cdot \big((132)+ (321) + (213) \big) \subset R[\S_3] \]
the rank two subspace of the regular representation on which cyclic permutations act trivially.
\begin{definition}\label{def:YnDioperad}
    For each integer $n$, we define \[\calY^{(n)} = \calT(E)/ (R) \] to be the quadratic dioperad generated by
    \[\begin{array}{ccccccc}        
        E   = &E(1;2)   & \oplus    & E(2;1) & \oplus & E(3;0)\\
            &R[\S_2]\cdot \mu  & \oplus    & R[\S_2]\cdot \alpha & \oplus & V \cdot \beta
    \end{array}\]
    where $|\mu| = 0$, $|\alpha| = -n +1$, and $|\beta| = -2n + 2$. We depict these generators as follows \tiny
    \[
    \tikzfig{
        \node[vertex] (mu) at (0,0) {$\mu$};
        \draw [->-] (-0.5,0.5) to (mu);
        \draw [->-] (0.5,0.5)to (mu);
        \draw [->-] (mu) to (0,-0.7);
    } \quad ; \quad \tikzfig{
        \node[vertex] (a) at (0,0) {$\alpha$};
        \draw [-w-] (a) to (-0.5,-0.5);
        \draw [->-] (a) to (0.5,-0.5);
        \draw [->-] (0,0.7) to (a);
    } \quad ; \quad \tikzfig{
        \node[vertex] (b) at (0,0) {$\beta$};
        \draw [->-] (b) to (-0.5,-0.5);
        \draw [->-] (b) to (0.5,-0.5);
        \draw [-w-] (b) to (0,0.7);
    } = \tikzfig{
        \node[vertex] (b) at (0,0) {$\beta$};
        \draw [-w-] (b) to (-0.5,-0.5);
        \draw [->-] (b) to (0.5,-0.5);
        \draw [->-] (b) to (0,0.7);
    } = \tikzfig{
        \node[vertex] (b) at (0,0) {$\beta$};
        \draw [->-] (b) to (-0.5,-0.5);
        \draw [-w-] (b) to (0.5,-0.5);
        \draw [->-] (b) to (0,0.7);
    }\] \normalsize
    we can express the submodule of relations $R$ as generated by the elements 
    \medskip
    
    \tiny
    \begin{align*}
        &\tikzfig{
            \node [vertex] (bot) at (0,-0.2) {$\mu$};
            \node [vertex] (top) at (-0.5,0.5) {$\mu$};
            \draw [->-] (-1,1) to (top);
            \draw [->-] (0,1) to (top);
            \draw [->-] (top) to (bot);
            \draw [->-] (1,1) to (bot);
            \draw [->-] (bot) to (0,-1);
        } ~ - ~ \tikzfig{
            \node [vertex] (bot) at (0,-0.2) {$\mu$};
            \node [vertex] (top) at (0.5,0.5) {$\mu$};
            \draw [->-] (-1,1) to (bot);
            \draw [->-] (0,1) to (top);
            \draw [->-] (top) to (bot);
            \draw [->-] (1,1) to (top);
            \draw [->-] (bot) to (0,-1);
        } ~ \quad ; \quad \tikzfig{
            \node [vertex] (top) at (0,0.5) {$\mu$};
            \node [vertex] (bot) at (0,-0.5) {$\alpha$};
            \draw [->-] (-1,1) to (top);
            \draw [->-] (top) to (bot);
            \draw [->-] (1,1) to (top);
            \draw [-w-] (bot) to (-1,-1);
            \draw [->-] (bot) to (1,-1);
        } ~ - ~ \tikzfig{
            \node [vertex] (top) at (0.5,0) {$\alpha$};
            \node [vertex] (bot) at (-0.5,0) {$\mu$};
            \draw [->-] (-1,1) to (bot);
            \draw [-w-] (top) to (bot);
            \draw [->-] (1,1) to (top);
            \draw [->-] (bot) to (-1,-1);
            \draw [->-] (top) to (1,-1);
        } ~ - ~ \tikzfig{
            \node [vertex] (top) at (-0.5,0) {$\alpha$};
            \node [vertex] (bot) at (0.5,0) {$\mu$};
            \draw [->-] (-1,1) to (top);
            \draw [->-] (top) to (bot);
            \draw [->-] (1,1) to (bot);
            \draw [-w-] (top) to (-1,-1);
            \draw [->-] (bot) to (1,-1);
        } ~;\\
        &\tikzfig{
            \node [vertex] (left) at (-0.5,0) {$\alpha$};
            \node [vertex] (right) at (0.5,0) {$\mu$};
            \draw [->-] (0,1) to (left);
            \draw [->-] (0,-1) to (right);
            \draw [-w-] (left) to (-1.2,0);
            \draw [->-] (left) to (right);
            \draw [->-] (right) to (1.2,0);
        } ~ - ~ \tikzfig{
            \node [vertex] (left) at (-0.5,0) {$\mu$};
            \node [vertex] (right) at (0.5,0) {$\alpha$};
            \draw [->-] (0,1) to (right);
            \draw [->-] (0,-1) to (left);
            \draw [->-] (left) to (-1.2,0);
            \draw [-w-] (right) to (left);
            \draw [->-] (right) to (1.2,0);
        } ~ + (-1)^n \left(\tikzfig{
            \node [vertex] (left) at (-0.5,0) {$\alpha$};
            \node [vertex] (right) at (0.5,0) {$\mu$};
            \draw [->-] (0,1) to (right);
            \draw [->-] (0,-1) to (left);
            \draw [->-] (left) to (-1.2,0);
            \draw [-w-] (left) to (right);
            \draw [->-] (right) to (1.2,0);
        } ~ - ~ \tikzfig{
            \node [vertex] (left) at (-0.5,0) {$\mu$};
            \node [vertex] (right) at (0.5,0) {$\alpha$};
            \draw [->-] (0,1) to (left);
            \draw [->-] (0,-1) to (right);
            \draw [->-] (left) to (-1.2,0);
            \draw [->-] (right) to (left);
            \draw [-w-] (right) to (1.2,0);
        }\right) ~; \\
        &\tikzfig{
            \node [vertex] (top) at (0,0.4) {$\mu$};
            \node [vertex] (bot) at (0,-0.6) {$\beta$};
            \draw [-w-] (bot) to (top);
            \draw [->-] (bot) to (-1.2,-1);
            \draw [->-] (bot) to (1.2,-1);
            \draw [->-] (-1.2,0.4) to (top);
            \draw [->-] (top) to (0,1);
        } ~ - \qquad \tikzfig{
            \node [vertex] (top) at (0,0.1) {$\beta$};
            \node [vertex] (bot) at (-0.7,-0.5) {$\mu$};
            \draw [-w-] (top) to (0,1);
            \draw [->-] (bot) to (-1.2,-1);
            \draw [->-] (top) to (1.2,-1);
            \draw [->-] (-1.2,0.4) to (bot);
            \draw [->-] (top) to (bot);
        } ~ + ~ \tikzfig{
            \node [vertex] (top) at (0,0.4) {$\alpha$};
            \node [vertex] (bot) at (0,-0.6) {$\alpha$};
            \node at (0.4,0.4) {$\scriptstyle{1}$};
            \node at (0.4,-0.4) {$\scriptstyle{2}$};
            \draw [-w-] (top) to (bot);
            \draw [-w-] (bot) to (-1.2,-1);
            \draw [->-] (bot) to (1.2,-1);
            \draw [->-] (-1.2,0.4) to (top);
            \draw [->-] (top) to (0,1);
        } ~ - ~ \tikzfig{
            \node [vertex] (top) at (0,0.1) {$\alpha$};
            \node [vertex] (bot) at (-0.7,-0.5) {$\alpha$};
            \node at (0.4,0.1) {$\scriptstyle{1}$};
            \node at (-0.3,-0.5) {$\scriptstyle{2}$};
            \draw [->-] (top) to (0,1);
            \draw [-w-] (bot) to (-1.2,-1);
            \draw [-w-] (top) to (1.2,-1);
            \draw [->-] (-1.2,0.4) to (bot);
            \draw [->-] (bot) to (top);
        } ~; \\ \\
        &\tikzfig{
            \node [vertex] (top) at (0,0.5) {$\alpha$};
            \node [vertex] (bot) at (0,-0.5) {$\beta$};
            \draw [-w-] (bot) to (top);
            \draw [->-] (top) to (-1,1);
            \draw [-w-] (top) to (1,1);
            \draw [->-] (bot) to (-1,-1);
            \draw [->-] (bot) to (1,-1);
        } ~ - ~ \tikzfig{
            \node [vertex] (left) at (-0.5,0) {$\alpha$};
            \node [vertex] (right) at (0.5,0) {$\beta$};
            \draw [-w-] (right) to (left);
            \draw [-w-] (left) to (-1,1);
            \draw [->-] (left) to (-1,-1);
            \draw [->-] (right) to (1,-1);
            \draw [->-] (right) to (1,1);
        } ~ + (-1)^n \left( ~ \tikzfig{
            \node [vertex] (top) at (0,0.5) {$\beta$};
            \node [vertex] (bot) at (0,-0.5) {$\alpha$};
            \draw [-w-] (top) to (bot);
            \draw [->-] (top) to (-1,1);
            \draw [->-] (top) to (1,1);
            \draw [-w-] (bot) to (-1,-1);
            \draw [->-] (bot) to (1,-1);
        } ~ - ~ \tikzfig{
            \node [vertex] (left) at (-0.5,0) {$\beta$};
            \node [vertex] (right) at (0.5,0) {$\alpha$};
            \draw [-w-] (right) to (left);
            \draw [->-] (left) to (-1,1);
            \draw [->-] (left) to (-1,-1);
            \draw [-w-] (right) to (1,-1);
            \draw [->-] (right) to (1,1);
        } ~ \right) ~, 
    \end{align*}\normalsize
\medskip
    
\noindent where we use the labels in the last two terms of the third line to indicate in which order we input the $\alpha$ operations before evaluating.
\end{definition}

\noindent  In more conventional notation, where one insists in having inputs on top and outputs on the bottom, we can write
\[ \calY^{(n)} = \left.\calT\left(
\begin{tikzpicture}[baseline=0ex,scale=0.2]
    \draw (0,2) node[above] {$\scriptstyle{1}$}
    -- (2,1);
   \draw (4,2) node[above] {$\scriptstyle{2}$}
   -- (2,1) -- (2,0) node[below] {$\scriptstyle{1}$};
   \draw[fill=white] (2,1) circle (10pt);
\end{tikzpicture}
~ ; ~
\begin{tikzpicture}[baseline=0ex,scale=0.2]
   \draw (0,0)  node[below] {$\scriptstyle{1}$} -- (0,0.5);
   \draw (2,0) node[below] {$\scriptstyle{2}$} -- (2,0.5);
   \draw[fill=white] (-0.3,0.5) rectangle (2.3,1);
  \draw (1,2) node[above] {$\scriptstyle{1}$} -- (1,1);
\end{tikzpicture}
~ ; ~
\begin{tikzpicture}[baseline=0ex,scale=0.2]
   \draw (0,0)  node[below] {$\scriptstyle{1}$} -- (0,0.5);
   \draw (2,0) node[below] {$\scriptstyle{2}$} --  (2,0.5);
   \draw (4,0) node[below] {$\scriptstyle{3}$} -- (4,0.5);
   \draw[fill=gray] (-0.3,0.5) rectangle (4.3,1);
\end{tikzpicture}
~ = ~
\begin{tikzpicture}[baseline=0ex,scale=0.2]
   \draw (0,0)  node[below] {$\scriptstyle{2}$} -- (0,0.5);
   \draw (2,0) node[below] {$\scriptstyle{3}$} --  (2,0.5);
   \draw (4,0) node[below] {$\scriptstyle{1}$} -- (4,0.5);
   \draw[fill=gray] (-0.3,0.5) rectangle (4.3,1);
\end{tikzpicture}
~ = ~
\begin{tikzpicture}[baseline=0ex,scale=0.2]
   \draw (0,0)  node[below] {$\scriptstyle{3}$} -- (0,0.5);
   \draw (2,0) node[below] {$\scriptstyle{1}$} --  (2,0.5);
   \draw (4,0) node[below] {$\scriptstyle{2}$} -- (4,0.5);
   \draw[fill=gray] (-0.3,0.5) rectangle (4.3,1);
\end{tikzpicture}
\right)
\right/ (R) \]
with $R$ generated by the elements
\begin{align*} 
&\begin{tikzpicture}[baseline=0.5ex,scale=0.2]
  \draw (0,4) node[above] {$\scriptstyle{1}$} --(4,0)
   -- (4,-1) node[below] {$\scriptstyle{1}$};
   \draw (4,4) node[above] {$\scriptstyle{2}$} -- (2,2);
   \draw (8,4) node[above] {$\scriptstyle{3}$}-- (4,0);
   \draw[fill=white] (2,2) circle (10pt);
   \draw[fill=white] (4,0) circle (10pt);
\end{tikzpicture}
-
\begin{tikzpicture}[baseline=0.5ex,scale=0.2]
   \draw (0,4) node[above] {$\scriptstyle{1}$}
   -- (4,0) -- (4,-1) node[below] {$\scriptstyle{1}$};
   \draw (4,4) node[above] {$\scriptstyle{2}$} -- (6,2);
   \draw (8,4) node[above] {$\scriptstyle{3}$} -- (4,0);
   \draw[fill=white] (6,2) circle (10pt);
   \draw[fill=white] (4,0) circle (10pt);
\end{tikzpicture}
~;~~
\begin{tikzpicture}[scale=0.2,baseline=0ex]
   \draw (0.3,3.5) node[above] {$\scriptstyle{1}$} --(2,2.5);
   \draw (3.7,3.5) node[above] {$\scriptstyle{2}$} -- (2,2.5);
   \draw (2,2) -- (2,0);
   \draw (0.3,0) -- (0.3,-1.5) node[below] {$\scriptstyle{1}$};
   \draw (3.7,0) -- (3.7,-1.5) node[below] {$\scriptstyle{2}$};
   \draw[fill=white] (2,2.5) circle (10pt);
   \draw[fill=white] (0,-0.5) rectangle (4,0);
\end{tikzpicture}
~ - ~
\begin{tikzpicture}[scale=0.2,baseline=1ex]
   \draw (0,4) node[above] {$\scriptstyle{1}$} -- (0,2) -- (1,0.5) -- (1,-0.5) node[below] {$\scriptstyle{1}$};
   \draw (4,-0.5) node[below] {$\scriptstyle{2}$} -- (4,2) ;
   \draw (3,4) node[above] {$\scriptstyle{2}$} -- (3,2.5); 
   \draw (2,2) -- (1,0.5);
   \draw[fill=white] (1,0.5) circle (10pt);
   \draw[fill=white] (1.7,2) rectangle (4.3,2.5);
\end{tikzpicture}
~ - ~
\begin{tikzpicture}[scale=0.2,baseline=1ex]
   \draw (0,4) node[above] {$\scriptstyle{2}$} -- (0,2) -- (-1,0.5) -- (-1,-0.5) node[below] {$\scriptstyle{2}$};
   \draw (-4,-0.5) node[below] {$\scriptstyle{1}$} -- (-4,2) ;
   \draw (-3,4) node[above] {$\scriptstyle{1}$} -- (-3,2.5); 
   \draw (-2,2) -- (-1,0.5);
   \draw[fill=white] (-1,0.5) circle (10pt);
   \draw[fill=white] (-1.7,2) rectangle (-4.3,2.5);
\end{tikzpicture} ~;\\
&\begin{tikzpicture}[scale=0.2,baseline=1ex]
   \draw (0,4) node[above] {$\scriptstyle{2}$} -- (0,2) -- (-2,0) -- (-1,-1) -- (-1,-2) node[below] {$\scriptstyle{2}$};
   \draw (-4,-2) node[below] {$\scriptstyle{1}$} -- (-4,2.5) ;
    \draw (-3,4) node[above] {$\scriptstyle{1}$} -- (-3,3); 
   \draw (-2,3) -- (-2,2) -- (0,0) -- (-1,-1);
   \draw[fill=white] (-1,-1) circle (10pt);
   \draw[fill=white] (-1.7,2.5) rectangle (-4.3,3);
\end{tikzpicture} 
~ - ~
\begin{tikzpicture}[scale=0.2,baseline=1ex]
   \draw (0,4) node[above] {$\scriptstyle{1}$} -- (0,3) -- (3,1) -- (3,0.5);
   \draw (4,-2) node[below] {$\scriptstyle{2}$} -- (4,0) ;
   \draw (3,4) node[above] {$\scriptstyle{2}$} -- (3,3) -- (0,1) -- (0,0) -- (1,-1) -- (1,-2) node[below] {$\scriptstyle{1}$}; 
   \draw (2,0) -- (1,-1);
   \draw[fill=white] (1,-1) circle (10pt);
   \draw[fill=white] (1.7,0) rectangle (4.3,0.5);
\end{tikzpicture} ~ + (-1)^n ~
\begin{tikzpicture}[scale=0.2,baseline=1ex]
   \draw (0,4) node[above] {$\scriptstyle{1}$} -- (0,2) -- (2,0) -- (1,-1) -- (1,-2) node[below] {$\scriptstyle{1}$};
   \draw (4,-2) node[below] {$\scriptstyle{2}$} -- (4,2.5) ;
   \draw (3,4) node[above] {$\scriptstyle{2}$} -- (3,3); 
   \draw (2,3) -- (2,2) -- (0,0) -- (1,-1);
   \draw[fill=white] (1,-1) circle (10pt);
   \draw[fill=white] (1.7,2.5) rectangle (4.3,3);
\end{tikzpicture} ~ - (-1)^n~
\begin{tikzpicture}[scale=0.2,baseline=1ex]
   \draw (0,4) node[above] {$\scriptstyle{2}$} -- (0,3) -- (-3,1) -- (-3,0.5);
   \draw (-4,-2) node[below] {$\scriptstyle{1}$} -- (-4,0) ;
   \draw (-3,4) node[above] {$\scriptstyle{1}$} -- (-3,3) -- (0,1) -- (0,0) -- (-1,-1) -- (-1,-2) node[below] {$\scriptstyle{2}$}; 
   \draw (-2,0) -- (-1,-1);
   \draw[fill=white] (-1,-1) circle (10pt);
   \draw[fill=white] (-1.7,0) rectangle (-4.3,0.5);
\end{tikzpicture}~;\\
&\begin{tikzpicture}[baseline=-1ex,scale=0.2]
   \draw (-1,-3) node[below] {$\scriptstyle{1}$} -- (-1,-2);
   \draw (2,-3) node[below] {$\scriptstyle{2}$} --  (2,1.5);
   \draw (4,-3) node[below] {$\scriptstyle{3}$} -- (4,1.5);
   \draw (-2,2) node[above] {$\scriptstyle{1}$} -- (-2,1) -- (0,-1) -- (-1,-2);
   \draw (0,1.5) -- (0,1) -- (-2,-1) -- (-1,-2);
   \draw[fill=gray] (-0.3,1.5) rectangle (4.3,2);
   \draw[fill=white] (-1,-2) circle (10pt);
\end{tikzpicture} ~ - ~ 
\begin{tikzpicture}[baseline=-1ex,scale=0.2]
   \draw (2,1.5) -- (2,1) -- (0,-1) -- (0,-3) node[below] {$\scriptstyle{2}$};
   \draw (4,-3) node[below] {$\scriptstyle{3}$} -- (4,1.5);
   \draw (-2,2) node[above] {$\scriptstyle{1}$} -- (-2,1) -- (0,-1);
   \draw (0,1.5) -- (0,1) -- (-2,-1) -- (-2,-3) node[below] {$\scriptstyle{1}$};
   \draw[fill=gray] (-0.3,1.5) rectangle (4.3,2);
   \draw[fill=white] (0,-1) circle (10pt);
\end{tikzpicture}
~ + ~
\begin{tikzpicture}[baseline=-1ex,scale=0.2]
   \draw (2,2) node[above] {$\scriptstyle{1}$} -- (2,0.5);
   \draw (0,0.5) -- (0,-1);
   \draw (4,0) -- (4,-3) node[below] {$\scriptstyle{3}$}; 
   \draw (1,-1.5) to (1,-3) node[below] {$\scriptstyle{2}$};
   \draw (-2,-1.5) to (-2,-3) node[below] {$\scriptstyle{1}$};
   \draw[fill=white] (-0.3,0) rectangle (4.3,0.5);
   \draw[fill=white] (-2.3,-1.5) rectangle (1.3,-1);
\end{tikzpicture}
~ -(-1)^{n-1} ~
\begin{tikzpicture}[baseline=-1ex,scale=0.2]
   \draw (-2,2) node[above] {$\scriptstyle{1}$} -- (-2,0.5);
   \draw (0,0.5) -- (0,-1);
   \draw (-4,0) -- (-4,-3) node[below] {$\scriptstyle{1}$}; 
   \draw (-1,-1.5) to (-1,-3) node[below] {$\scriptstyle{2}$};
   \draw (2,-1.5) to (2,-3) node[below] {$\scriptstyle{3}$};
   \draw[fill=white] (0.3,0) rectangle (-4.3,0.5);
   \draw[fill=white] (2.3,-1.5) rectangle (-1.3,-1);
\end{tikzpicture}~;\\
&\begin{tikzpicture}[baseline=-1ex,scale=0.2]
   \draw (-2,-3) node[below] {$\scriptstyle{1}$} -- (-2,-1.5);
   \draw (0,-3) node[below] {$\scriptstyle{2}$} -- (0,-1.5);
   \draw (2,-3) node[below] {$\scriptstyle{3}$} --  (2,1.5);
   \draw (4,-3) node[below] {$\scriptstyle{4}$} -- (4,1.5);
   \draw (0,1.5) -- (0,1) -- (-1,0) -- (-1,-1);
   \draw[fill=gray] (-0.3,1.5) rectangle (4.3,2);
   \draw[fill=white] (-2.3,-1.5) rectangle (0.3,-1);
\end{tikzpicture} ~ - ~ 
\begin{tikzpicture}[baseline=-1ex,scale=0.2]
   \draw (-2,-3) node[below] {$\scriptstyle{2}$} -- (-2,-1.5);
    \draw (0,-3) node[below] {$\scriptstyle{3}$} -- (0,-1.5);
   \draw (2,-3) node[below] {$\scriptstyle{4}$} --  (2,1.5);
   \draw (4,-3) node[below] {$\scriptstyle{1}$} -- (4,1.5);
   \draw (0,1.5) -- (0,1) -- (-1,0) -- (-1,-1);
   \draw[fill=gray] (-0.3,1.5) rectangle (4.3,2);
   \draw[fill=white] (-2.3,-1.5) rectangle (0.3,-1);
\end{tikzpicture} ~ +(-1)^n ~
\begin{tikzpicture}[baseline=-1ex,scale=0.2]
   \draw (-2,-3) node[below] {$\scriptstyle{3}$} -- (-2,-1.5);
   \draw (0,-3) node[below] {$\scriptstyle{4}$} -- (0,-1.5);
   \draw (2,-3) node[below] {$\scriptstyle{1}$} --  (2,1.5);
   \draw (4,-3) node[below] {$\scriptstyle{2}$} -- (4,1.5);
   \draw (0,1.5) -- (0,1) -- (-1,0) -- (-1,-1);
   \draw[fill=gray] (-0.3,1.5) rectangle (4.3,2);
   \draw[fill=white] (-2.3,-1.5) rectangle (0.3,-1);
\end{tikzpicture} ~  -(-1)^n ~
\begin{tikzpicture}[baseline=-1ex,scale=0.2]
   \draw (-2,-3) node[below] {$\scriptstyle{4}$} -- (-2,-1.5);
   \draw (0,-3) node[below] {$\scriptstyle{1}$} -- (0,-1.5);
   \draw (2,-3) node[below] {$\scriptstyle{2}$} --  (2,1.5);
   \draw (4,-3) node[below] {$\scriptstyle{3}$} -- (4,1.5);
   \draw (0,1.5) -- (0,1) -- (-1,0) -- (-1,-1);
   \draw[fill=gray] (-0.3,1.5) rectangle (4.3,2);
   \draw[fill=white] (-2.3,-1.5) rectangle (0.3,-1);
\end{tikzpicture}.
\end{align*}

\begin{proposition}\label{prop:YimpliesPreCY}
    If $(\mu,\alpha,\beta)$ is a $\calY^{(n)}$-algebra structure on $A$ with vanishing differential, then
    \[ m^2_{(1)} = \pm \mu, \quad m^{1,0}_{(2)} = (-1)^{n-1} \tau \circ m^{0,1}_{(2)} = \pm \alpha, \quad m^{0,0,0}_{(3)} = \pm \beta, \]
    for an appropriate choice of signs, with all the other structure maps set to zero, defines a pre-CY structure with vanishing copairing on $A$.
\end{proposition}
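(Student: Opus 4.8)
The plan is to verify directly that the triple $(\mu,\alpha,\beta)$, reinterpreted as the structure maps $m^2_{(1)}$, $m^{1,0}_{(2)}$, $m^{0,0,0}_{(3)}$ (with the correct Koszul signs), satisfies the two conditions defining an $n$-pre-CY structure with vanishing copairing: the Maurer--Cartan equation $m \underset{\mathrm{nec}}{\circ} m = 0$ together with $m_{(1)}^0 = 0$ and $m^{0,0}_{(2)} = 0$. The latter two vanishing conditions are immediate since the only nonzero components of $m$ are in arities $(2;1)$, $(1;2)$, and $(3;0)$ by construction, and none of these is the arity $(1;1)$ or arity $(2;0)$ component. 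So the entire content is the Maurer--Cartan equation.

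The strategy is to decompose the equation $m \underset{\mathrm{nec}}{\circ} m = 0$ according to the arity $(\ell; k)$ of its output, where $\ell$ counts the number of output blocks. Since $m$ has components only in $\ell \in \{1,2,3\}$ and the necklace product of two components with block-counts $\ell_1,\ell_2$ lands in block-count $\ell_1 + \ell_2 - 1$, the possible total block-counts appearing in $m \underset{\mathrm{nec}}{\circ} m$ are $\ell \in \{1,2,3,4,5\}$; but since we also want vanishing copairing, the relevant equations are exactly those in block-counts $\ell = 1, 2, 3, 4$ — and these correspond termwise to the four families of quadratic relations $R$ listed in \cref{def:YnDioperad}. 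Concretely: the $\ell=1$ part is $m^2_{(1)} \underset{\mathrm{nec}}{\circ} m^2_{(1)} = 0$, which is precisely associativity of $\mu$, matching the first relation; the $\ell=2$ part involves $m_{(1)} \circ m_{(2)}$ and $m_{(2)} \circ m_{(1)}$ terms and should reproduce the second and third relations (the $A_\infty$-bimodule-type compatibility of $\alpha$ with $\mu$, and the cyclic symmetry relation with the $(-1)^n$ signs); the $\ell=3$ part involves $m_{(1)} \circ m_{(3)}$, $m_{(3)} \circ m_{(1)}$, and $m_{(2)} \circ m_{(2)}$, matching the fourth relation (the one with two $\beta$'s... rather two $\alpha$'s composing, $\beta$ interacting with $\mu$); and the $\ell=4$ part is $m_{(2)} \circ m_{(3)}$ plus $m_{(3)} \circ m_{(2)}$, matching the last relation. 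One checks that no other necklace products contribute: e.g.\ $m_{(3)} \circ m_{(3)}$ would land in $\ell = 5$ but requires an input leg on one $m_{(3)}$, and since both $m_{(1)}^0$ and higher zero-input components beyond $\beta = m^{0,0,0}_{(3)}$ vanish, or rather since $\beta$ has zero inputs, $m_{(3)} \circ m_{(3)}$ vanishes for arity reasons (plugging a $3$-output-no-input operation into another leaves nothing to plug).

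The main obstacle — and the only real work — is fixing the signs. One must choose the identifications $m^2_{(1)} = \pm\mu$, $m^{1,0}_{(2)} = \pm\alpha$, $m^{0,0,0}_{(3)} = \pm\beta$ and the induced sign in $m^{0,1}_{(2)} = (-1)^{n-1}\tau\circ m^{1,0}_{(2)}$ so that, after translating the planar-tree pictures of the relations $R$ into the necklace-product formalism of \cite{KTV1} (keeping careful track of the suspension shifts $s^{(n-2)(\ell-1)}$ and the Koszul signs coming from treating inputs and outputs as elements of $sA$), each family of relations becomes exactly a graded component of $m\underset{\mathrm{nec}}{\circ} m = 0$. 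I expect this to be a finite but delicate bookkeeping exercise: the degrees are arranged so that $|\mu| = 0$, $|\alpha| = -n+1$, $|\beta| = -2n+2$ match the shifted degrees $|m^2_{(1)}| $, $|m^{1,0}_{(2)}|$, $|m^{0,0,0}_{(3)}|$ in the tangent complex after the shifts in \cref{def:dgLieConvolution}, which is the consistency check that makes the sign choice possible. Once the dictionary between the graphical relations and the necklace product is set up (this is essentially the content of the definition of $\calY^{(n)}$ having been designed with these relations in mind), the proof reduces to observing that the defining relations of $\calY^{(n)}$ were reverse-engineered from the arity-$(\ell;k)$ components of the pre-CY Maurer--Cartan equation with vanishing copairing, so the verification is a matter of matching terms one relation family at a time.
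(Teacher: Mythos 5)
Your proposal is correct and takes essentially the same route as the paper: both decompose the Maurer--Cartan equation $m\underset{\mathrm{nec}}{\circ}m=0$ by output arity, observe that with vanishing differential the resulting system closes on the three components $m^2_{(1)}$, $m^{1,0}_{(2)}$, $m^{0,0,0}_{(3)}$ without involving higher operations, and reduce the remainder to a sign-convention check identifying these equations with the defining relations of $\calY^{(n)}$. The paper's proof is simply a terser statement of the same bookkeeping you lay out.
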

\begin{proof}
    Follows from the fact that if the differential of $A$ vanishes the pre-CY structure equation $[m,m]=0$ decomposes into a closed set of equations for $m^2_{(1)}, m^{1,0}_{(2)}$ and $m^{0,0,0}_{(3)}$, in the sense that they do not involve any of the higher operations. By changing sign conventions between e.g. \cite{KTV1} and the conventions above, we check that these equations agree with the $\calY^{(n)}$-algebra structure equations.
\end{proof}

\noindent Let us consider the quadratic dual dioperad of $\calY^{(n)}$. This can be given explicitly by using the description of \cite[Corollary~7.12]{Vallette_2007}, keeping only the relations of genus zero. One obtains the following presentation \[\calY^{(n)!} = \calT \left(E^\vee \right)/ (R^\perp_0 ) \] where the generators are
    \[\begin{array}{lllllll}        
        E^\vee   = &E^\vee(1;2)   & \oplus    & E^\vee(2;1) & \oplus & E^\vee(3;0)\\
            &R[\S_2]\cdot \nu  & \oplus    & R[\S_2]\cdot \omega & \oplus & V\otimes R_\mathrm{sgn} \cdot \psi
    \end{array}\]
    and the $\S$-bimodule of relations $R^\perp_0$ (``orthogonal complement in genus zero'') is generated by 
    
    \medskip
    \tiny 
    $ (1) \qquad \tikzfig{
        \node [vertex] (bot) at (0,-0.2) {$\nu$};
        \node [vertex] (top) at (-0.5,0.5) {$\nu$};
        \draw [->-] (-1,1) to (top);
        \draw [->-] (0,1) to (top);
        \draw [->-] (top) to (bot);
        \draw [->-] (1,1) to (bot);
        \draw [->-] (bot) to (0,-1);
    } ~ + ~ \tikzfig{
        \node [vertex] (bot) at (0,-0.2) {$\nu$};
        \node [vertex] (top) at (0.5,0.5) {$\nu$};
        \draw [->-] (-1,1) to (bot);
        \draw [->-] (0,1) to (top);
        \draw [->-] (top) to (bot);
        \draw [->-] (1,1) to (top);
        \draw [->-] (bot) to (0,-1);
    } \qquad \qquad   \qquad (2)  \qquad \tikzfig{
        \node [vertex] (top) at (0,0.5) {$\nu$};
        \node [vertex] (bot) at (0,-0.5) {$\omega$};
        \draw [->-] (-0.6,1) to (top);
        \draw [->-] (top) to (bot);
        \draw [->-] (0.6,1) to (top);
        \draw [-w-] (bot) to (-0.6,-1);
        \draw [->-] (bot) to (0.6,-1);
    } ~ + (-1)^{n-1} ~\tikzfig{
        \node [vertex] (top) at (0.6,0) {$\omega$};
        \node [vertex] (bot) at (-0.6,0) {$\nu$};
        \draw [->-] (-0.6,1) to (bot);
        \draw [-w-] (top) to (bot);
        \draw [->-] (0.6,1) to (top);
        \draw [->-] (bot) to (-0.6,-1);
        \draw [->-] (top) to (0.6,-1);
    }  $ 
    
     $ (3) \qquad  \tikzfig{
        \node [vertex] (top) at (0.6,0) {$\omega$};
        \node [vertex] (bot) at (-0.6,0) {$\nu$};
        \draw [->-] (-0.6,1) to (bot);
        \draw [-w-] (top) to (bot);
        \draw [->-] (0.6,1) to (top);
        \draw [->-] (bot) to (-0.6,-1);
        \draw [->-] (top) to (0.6,-1);
    } ~ - ~ \tikzfig{
        \node [vertex] (top) at (-0.6,0) {$\omega$};
        \node [vertex] (bot) at (0.6,0) {$\nu$};
        \draw [->-] (-0.6,1) to (top);
        \draw [->-] (top) to (bot);
        \draw [->-] (0.6,1) to (bot);
        \draw [-w-] (top) to (-0.6,-1);
        \draw [->-] (bot) to (0.6,-1);
    }\qquad  \qquad \qquad \qquad \; \qquad(4)  \qquad  \tikzfig{
        \node [vertex] (left) at (-0.5,0) {$\omega$};
        \node [vertex] (right) at (0.5,0) {$\nu$};
        \draw [->-] (0,1) to (left);
        \draw [->-] (0,-1) to (right);
        \draw [-w-] (left) to (-1.2,0);
        \draw [->-] (left) to (right);
        \draw [->-] (right) to (1.2,0);
    } ~ + ~ \tikzfig{
        \node [vertex] (left) at (-0.5,0) {$\nu$};
        \node [vertex] (right) at (0.5,0) {$\omega$};
        \draw [->-] (0,1) to (right);
        \draw [->-] (0,-1) to (left);
        \draw [->-] (left) to (-1.2,0);
        \draw [-w-] (right) to (left);
        \draw [->-] (right) to (1.2,0);
    } $   

    $  (5) \qquad    \tikzfig{
        \node [vertex] (left) at (-0.5,0) {$\nu$};
        \node [vertex] (right) at (0.5,0) {$\omega$};
        \draw [->-] (0,1) to (right);
        \draw [->-] (0,-1) to (left);
        \draw [->-] (left) to (-1.2,0);
        \draw [-w-] (right) to (left);
        \draw [->-] (right) to (1.2,0);
    } ~ + (-1)^n \tikzfig{
        \node [vertex] (left) at (-0.5,0) {$\omega$};
        \node [vertex] (right) at (0.5,0) {$\nu$};
        \draw [->-] (0,1) to (right);
        \draw [->-] (0,-1) to (left);
        \draw [->-] (left) to (-1.2,0);
        \draw [-w-] (left) to (right);
        \draw [->-] (right) to (1.2,0);
    } \; \qquad (6)  \qquad   \tikzfig{
        \node [vertex] (top) at (0,0.4) {$\psi$};
        \node [vertex] (bot) at (0,-0.6) {$\nu$};
        \draw [-w-] (bot) to (top);
        \draw [->-] (bot) to (-1.2,-1);
        \draw [->-] (bot) to (1.2,-1);
        \draw [->-] (-1.2,0.4) to (top);
        \draw [->-] (top) to (0,1);
    } ~ + \qquad \tikzfig{
        \node [vertex] (top) at (0,0.1) {$\psi$};
        \node [vertex] (bot) at (-0.7,-0.5) {$\nu$};
        \draw [-w-] (top) to (0,1);
        \draw [->-] (bot) to (-1.2,-1);
        \draw [->-] (top) to (1.2,-1);
        \draw [->-] (-1.2,0.4) to (bot);
        \draw [->-] (top) to (bot);
    } $
    
     $  (7) \qquad  \tikzfig{
        \node [vertex] (top) at (0,0.1) {$\psi$};
        \node [vertex] (bot) at (-0.7,-0.5) {$\nu$};
        \draw [-w-] (top) to (0,1);
        \draw [->-] (bot) to (-1.2,-1);
        \draw [->-] (top) to (1.2,-1);
        \draw [->-] (-1.2,0.4) to (bot);
        \draw [->-] (top) to (bot);
    } ~ + (-1)^n ~ \tikzfig{
        \node [vertex] (top) at (0,0.4) {$\omega$};
        \node [vertex] (bot) at (0,-0.6) {$\omega$};
        \node at (0.4,0.4) {$\scriptstyle{1}$};
        \node at (0.4,-0.4) {$\scriptstyle{2}$};
        \draw [-w-] (top) to (bot);
        \draw [-w-] (bot) to (-1.2,-1);
        \draw [->-] (bot) to (1.2,-1);
        \draw [->-] (-1.2,0.4) to (top);
        \draw [->-] (top) to (0,1);
    }  \qquad (8)  \qquad   \tikzfig{
        \node [vertex] (top) at (0,0.4) {$\omega$};
        \node [vertex] (bot) at (0,-0.6) {$\omega$};
        \node at (0.4,0.4) {$\scriptstyle{1}$};
        \node at (0.4,-0.4) {$\scriptstyle{2}$};
        \draw [-w-] (top) to (bot);
        \draw [-w-] (bot) to (-1.2,-1);
        \draw [->-] (bot) to (1.2,-1);
        \draw [->-] (-1.2,0.4) to (top);
        \draw [->-] (top) to (0,1);
    } ~ + (-1)^{n-1} ~ \tikzfig{
        \node [vertex] (top) at (0,0.1) {$\omega$};
        \node [vertex] (bot) at (-0.7,-0.5) {$\omega$};
        \node at (0.4,0.1) {$\scriptstyle{1}$};
        \node at (-0.3,-0.5) {$\scriptstyle{2}$};
        \draw [->-] (top) to (0,1);
        \draw [-w-] (bot) to (-1.2,-1);
        \draw [-w-] (top) to (1.2,-1);
        \draw [->-] (-1.2,0.4) to (bot);
        \draw [->-] (bot) to (top);
    } $
    
 $ (9) \qquad
    \tikzfig{
        \node [vertex] (top) at (0,0.5) {$\omega$};
        \node [vertex] (bot) at (0,-0.5) {$\psi$};
        \draw [-w-] (bot) to (top);
        \draw [->-] (top) to (-1,1);
        \draw [-w-] (top) to (1,1);
        \draw [->-] (bot) to (-1,-1);
        \draw [->-] (bot) to (1,-1);
    } ~ + (-1)^{n-1} ~ \tikzfig{
        \node [vertex] (left) at (-0.5,0) {$\omega$};
        \node [vertex] (right) at (0.5,0) {$\psi$};
        \draw [-w-] (right) to (left);
        \draw [-w-] (left) to (-1,1);
        \draw [->-] (left) to (-1,-1);
        \draw [->-] (right) to (1,-1);
        \draw [->-] (right) to (1,1);
    } ~. $

\normalsize

\begin{proposition}\label{prop:subdioperad}
    There is an injective map of dioperads
    \[ i \colon \calY^{(n)!} \hookrightarrow \calV^{(n)!} \]
    whose image is exactly the subdioperad spanned by operations of arity different from $(2;0)$.
\end{proposition}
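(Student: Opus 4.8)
The plan is to construct $i$ explicitly on generators and then prove, in order, that it is well defined, that it surjects onto the subdioperad $S$ of operations of arity $\neq (2;0)$, and that it is injective.

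First I would record the quadratic presentation of $\calV^{(n)!}$, obtained by the same procedure as above for $\calY^{(n)!}$: apply \cite[Corollary~7.12]{Vallette_2007} to the Poirier--Tradler presentation of $\calV^{(n)}$ and keep only the genus-zero relations. This gives $\calV^{(n)!}$ with two generators --- a product $\mu^\vee$ in arity $(1;2)$ and a generator $\nu^\vee$ in arity $(2;0)$ --- and quadratic relations in arities $(1;3)$ (dual associativity) and $(2;1)$ (dual to the $\mu$--$\nu$ compatibility). Inside $\calV^{(n)!}$, let $\bar\omega$ be the arity-$(2;1)$ element obtained by feeding one output leg of $\nu^\vee$ into an input of $\mu^\vee$, and $\bar\psi$ the arity-$(3;0)$ element obtained by capping both inputs of $\mu^\vee$ with copies of $\nu^\vee$. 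I would then set
$$ i(\nu) = \mu^\vee, \qquad i(\omega) = \bar\omega, \qquad i(\psi) = \bar\psi $$
up to normalisation and extend $\S$-equivariantly; the only compatibility to check here is that $\bar\psi$ is invariant under cyclically rotating its three output legs, so that it spans a copy of $V\otimes R_{\mathrm{sgn}}$ matching $E^\vee(3;0)$, and this follows from the arity-$(2;1)$ relations of $\calV^{(n)!}$.

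Second, I would check that $i$ kills each of the nine relations $R^\perp_0$, so that it descends to a morphism of dioperads $\calY^{(n)!}\to\calV^{(n)!}$. Under the identification $\nu\mapsto\mu^\vee$, relation $(1)$ is literally the arity-$(1;3)$ relation of $\calV^{(n)!}$; relations $(2)$--$(5)$ become identities among $\mu^\vee$ and $\bar\omega$, and relations $(6)$--$(9)$ identities among $\bar\omega$ and $\bar\psi$, each of which lies in the ideal generated by the two families of quadratic relations of $\calV^{(n)!}$ --- a finite sign check, run exactly as in the derivation of the presentation of $\calY^{(n)!}$. For the image: a half-edge count shows that a connected genus-zero composite of $a$ copies of $\mu^\vee$ and $b$ copies of $\nu^\vee$ has arity $(b+1\,;\,a-b+1)$. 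Consequently the arity-$(2;0)$ component of $\calV^{(n)!}$ is spanned by $\nu^\vee$, the span $S$ of all operations of arity $\neq(2;0)$ is closed under composition (a composite of two operations has at least two internal vertices, hence is not $\nu^\vee$), and $\mu^\vee,\bar\omega,\bar\psi$ are the only composites containing a single $\mu^\vee$-vertex. An induction on $b$, at each step absorbing into some $\mu^\vee$-vertex the one or two copies of $\nu^\vee$ feeding its inputs --- legitimate because the genus-zero condition forbids a single $\nu^\vee$ feeding both inputs of the same $\mu^\vee$ --- then shows that every operation of $\calV^{(n)!}$ of arity $\neq(2;0)$ is a composite of $\mu^\vee,\bar\omega,\bar\psi$. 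Hence $i$ is surjective onto $S$.

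The last point, which I expect to be the genuine obstacle, is injectivity. Since $\calV^{(n)}$ is Koszul by \cite[Theorem~1.2]{poirier2019koszuality}, the generating series of $\calV^{(n)!}$, and hence of $S$ (remove the summand spanned by $\nu^\vee$), is computable; combined with the surjection just built, injectivity of $i$ is equivalent to the generating series of $\calY^{(n)!}$ being no larger than that of $S$. I would obtain this by extracting a monomial (PBW-type) basis of $\calT(E^\vee)/(R^\perp_0)$ for an admissible order with $\nu>\omega>\psi$, using the rewriting rules encoded by relations $(1)$--$(9)$, and matching its leading monomials with the composites of $\mu^\vee,\bar\omega,\bar\psi$ produced by the absorption argument; equivalently, if the Koszulness of $\calY^{(n)}$ is available (or proved in parallel), the series of $\calY^{(n)!}$ is pinned down and one merely checks the two agree. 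The forced equality of dimensions then upgrades the surjection $i\colon\calY^{(n)!}\twoheadrightarrow S$ to the asserted isomorphism onto the subdioperad of operations of arity $\neq(2;0)$.
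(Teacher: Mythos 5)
Your construction of $i$ is exactly the paper's: the two generators of $\calV^{(n)!}$ (called $\pi$ in arity $(1;2)$ and $\gamma$ in arity $(2;0)$ there, following \cite[Proposition~3.7]{LV22}) are used in the same way, with $\nu\mapsto\pi$, $\omega\mapsto$ one output of $\gamma$ plugged into $\pi$, and $\psi\mapsto\pi$ with both inputs capped by $\gamma$'s; the well-definedness is likewise dispatched as a finite check on the relations. Your half-edge count showing that a connected genus-zero composite of $a$ copies of $\pi$ and $b$ copies of $\gamma$ has arity $(b+1;a-b+1)$, hence that arity $(2;0)$ forces a single $\gamma$-vertex, together with the absorption induction, is correct and actually more explicit about surjectivity onto $S$ than what the paper writes (just be aware that a single $\gamma$ may feed inputs of two \emph{different} $\pi$'s; one then absorbs it into one of them as an $\bar\omega$ and composes the remaining free output into the other).

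The divergence — and the real issue — is injectivity, which you correctly flag as the crux but do not carry out. Deferring to "extract a PBW-type basis and compare generating series" leaves precisely the hard combinatorial content unproved, and your fallback of invoking Koszulness of $\calY^{(n)}$ is not available off the shelf (it is nowhere established independently; in the paper it is downstream of this very proposition). The paper closes the gap with a concrete normal form that is exactly what your rewriting argument would have to produce: every monomial of $\calY^{(n)!}$ is a directed tree with trivalent vertices, which can be embedded in the disc with every vertex in standard orientation; reading the inputs and outputs around the boundary circle assigns to it a sequence $S=\left(\sigma_1,(\tau_1,\dots,\tau_{k_1}),\sigma_2,\dots\right)$, and relations $(1)$--$(9)$ say precisely that any two trees with the same boundary sequence coincide up to sign, since any two trivalent trees are related by homotoping through tetravalent vertices and each such move is one of the relations. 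The sequences $S$ therefore index a spanning set, which $i$ sends bijectively onto the standard-shape basis of $\calV^{(n)!}$ of \cite[Equation~26]{LV22} minus the arity-$(2;0)$ element; linear independence and injectivity then follow simultaneously, with no generating-series computation needed. If you prefer your formulation, this planar argument is the confluence/normal-form verification you would still have to supply; as written, the proof is incomplete at that step.
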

\begin{proof}
    We use the description of $\calV^{(n)!}$ from \cite[Proposition~3.7]{LV22}. It has two generators, $\pi$ in arity $(1;2)$ and $\gamma$ in arity $(2;0)$. We give the map $i$ by describing it on generators: \tiny
    \[
    \tikzfig{
        \node[vertex] (nu) at (0,0) {$\nu$};
        \draw [->-] (-0.5,0.5) to (nu);
        \draw [->-] (0.5,0.5)to (nu);
        \draw [->-] (nu) to (0,-0.7);
    } ~ \mapsto~
    \tikzfig{
        \node[vertex] (nu) at (0,0) {$\pi$};
        \draw [->-] (-0.5,0.5) to (nu);
        \draw [->-] (0.5,0.5)to (nu);
        \draw [->-] (nu) to (0,-0.7);
    } ~ \qquad ; \qquad ~
    \tikzfig{
        \node[vertex] (a) at (0,0) {$\omega$};
        \draw [-w-] (a) to (-0.5,-0.5);
        \draw [->-] (a) to (0.5,-0.5);
        \draw [->-] (0,0.7) to (a);
    } ~ \mapsto~
    \tikzfig{
        \node [vertex] (gamma) at (-0.5,0) {$\gamma$};
        \node [vertex] (pi) at (0.5,0) {$\pi$};
        \draw [-w-] (gamma) to (-1.2,-0.5);
        \draw [->-] (gamma) to (pi);
        \draw [->-] (pi) to (1.2,-0.5);
        \draw [->-] (0,1) to (pi);
    } ~ \qquad ; \qquad ~
    \tikzfig{
        \node[vertex] (b) at (0,0) {$\psi$};
        \draw [->-] (b) to (-0.5,-0.5);
        \draw [->-] (b) to (0.5,-0.5);
        \draw [-w-] (b) to (0,0.7);
    } ~ \mapsto~
    \tikzfig{
        \node [vertex] (left) at (-0.5,0) {$\gamma$};
        \node [vertex] (right) at (0.5,0) {$\pi$};
        \node [vertex] (top) at (0,1) {$\gamma$};
        \draw [-w-] (left) to (-1.2,-0.5);
        \draw [->-] (left) to (right);
        \draw [->-] (right) to (1.2,-0.5);
        \draw [-w-] (top) to (right);
        \draw [->-] (top) to (0,1.8);
    }~.
    \]
    \normalsize
    One can check all the necessary relations to see that this gives a map of dioperads. As for the injectivity and the characterization of the image of $i$, we can prove both statements by giving a basis of $\calY^{(n)!}$ which maps bijectively to the appropriate subset of a basis of $\calV^{(n)!}$.
    
    For that, we rely on the planar symmetry that is made apparent by the diagrammatics. By definition, the dioperad $\calY^{(n)!}$ is spanned by \emph{directed trees} whose internal vertices all have total arity three, and have as arity $(1;2),(2;1)$ or $(3;0)$, labeled respectively with $\nu,\omega$ and $\psi$. Every such directed tree can be embedded in the disc, such that all these vertices are in the orientation shown above; to see this, just pick any embedding and for every vertex oriented incorrectly we exchange the two branches of the tree coming out of it, putting it in the desired orientation. Thus, for each such directed tree of arity $(\ell;N)$, we can read the inputs and outputs around the disc, uniquely giving a sequence
    \[ S = \left(\sigma_1, (\tau_1,\dots, \tau_{k_1}), \sigma_2, (\tau_{k_1+1},\dots,\tau_{k_1+k_2}),\dots,\sigma_\ell, (\tau_{k_1+\dots+k_{\ell-1}},\dots,\tau_{k_1+\dots+k_{\ell}}) \right)\]
    where $N = k_1 + \dots +k_\ell$. Here $\sigma$ is the permutation giving the ordering of outputs around the boundary circle and $\tau$ is the permutation of $N$ giving the ordering of inputs; there are $k_j$ inputs in between the $j$th output $j$ and the $(j+1)$th output. 
    
    We note now that the relations of the dioperad $\calY^{(n)!}$ say exactly that, up to sign, every such directed tree with fixed sequence $S$ gets identified; this is because every directed tree with trivalent internal vertices can be transformed into any other such directed tree by homotoping through tetravalent vertices, and the relations say exactly that these moves act by a sign. Therefore $\calY^{(n)!}$ has one basis element for each sequence $S$; for each basis element we can find a tree that is in standard shape, analogous to the shape given in \cite[Equation~26]{LV22}. We deduce that $i$ gives an identification between this basis and the subset of that basis for $(\calV_{(n)})^!$ that is missing exactly the arity $(2;0)$ element.
\end{proof}

\begin{remark}
    We note that the map $i$ is a map of dioperads, but not of \emph{quadratic dioperads}, in the sense that it does not come from a map of the generators for which the relations are quadratic. Thus it cannot be carried through quadratic duality; there is no map $\calV^{(n)} \to \calY^{(n)}$ which corresponds to it, which is obvious since any such map would kill the copairing in $\calV^{(n)}$ and therefore factor through the associative operad.
\end{remark}

\begin{proposition}\label{prop:isomorphismCodioperads}
    There is an isomorphism of codioperads $\calC \cong \calY^{(n)\antish}$, inducing a bijection between $\calY^{(n)}_\infty$-structures and pre-CY-structures with vanishing copairing.
\end{proposition}
\begin{proof}
    The isomorphism is given by the dual map $i^\vee$ to the map in \cref{prop:subdioperad}, which gives an isomorphism to the quotient $\calC$ in this finitely-generated setting. We can thus identify the quotients. Applying cobar, we get an isomorphism of codioperads between $\calY^{(n)}_\infty$ and $\Omega\calC$.
\end{proof}

\subsection{Dioperads vs properads}
So far in this section, the discussion has been entirely about dioperads and codioperads, that is, objects that encode (de)composition maps indexed by trees. We follow the discussion of the relations between dioperads and properads in \cite[Section~5.6]{Merkulov_2009}. There are adjoint functors
\[ F \colon \mathrm{Dioperads} \leftrightarrows \mathrm{Properads} \colon U \]
between the categories of dioperads and properads. The forgetful functor $U$ preserves the underlying $\S$-bimodule and remembers only the genus zero compositions. Its left adjoint $F$ freely adjoins higher genus compositions without any higher genus relations. As for codioperads and coproperads, every codioperad is also a coproperad: there is an inclusion
\[ I \colon \mathrm{Codioperads} \hookrightarrow \mathrm{Coproperads} \]
preserving the $\S$-bimodules. A coproperad $\calC$ is in the image of $I$ if and only if its decomposition map lies in the genus zero component. In that case, we will simply say that $\calC$ \emph{is a codioperad.}

In general, given a quadratic dioperad $\calQ$, the quadratic dual \emph{coproperad} $(F\calQ)^\antish$ will not be in the image of $I$, since its decomposition map will not only generate genus zero graphs. We can characterize when that \emph{is} the case by linear duality. For that, note that we have the following maps of dioperads
\[\begin{array}{ccccc}
    \calQ^! & \to   & UF(\calQ^!) & \to & U((F\calQ)^!) \\
    \verteq &   & \verteq &     & \verteq \\
    \calT(s^{-1} E^\vee)/\langle s^{-2} R^\perp_{0} \rangle & & \calG(s^{-1}E^\vee)/\langle s^{-2}R^\perp_{0} \rangle & & \calG(s^{-1}E^\vee)/\langle s^{-2}R^\perp \rangle
\end{array}\]
where $R^\perp_0$ is the `dioperadic' orthogonal complement to $R$, a subspace of $\calT(E)^{(2)}$, and $R^\perp$ is its `properadic' orthogonal complement, a subspace of $\calG(E)^{(2)}$. We rephrase the statements in  \cite{Merkulov_2009} in the following way:
\begin{proposition}\label{prop:contractible}
    Let $\calQ$ be a finitely-generated quadratic dioperad. Then $(F\calQ)^\antish$ is a codioperad if and only if the composition
    \[ \calQ^! \to UF(\calQ^!) \to U((F\calQ)^!) \]
    is an isomorphism of dioperads. In that case, the canonical map of dg properads
    \[ F(\Omega \calQ^\antish) \to \Omega(F\calQ)^\antish \]
    is an isomorphism.
\end{proposition}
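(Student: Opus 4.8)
The plan is to turn both conditions in the biconditional into one and the same dimension count, via linear duality in the finitely generated setting. Write $\calQ = \calT(E)/\langle R\rangle$ with $R \subseteq \calT(E)^{(2)}$, so that $F\calQ = \calG(E)/\langle R\rangle$ with $R$ now viewed inside $\calG(E)^{(2)}$. First I would record that the genus-zero part of the Koszul dual always embeds: from the standard ``intersection'' description, $\calQ^\antish(w)$ is cut out inside $\calT(sE)^{(w)}$ by requiring every $2$-vertex sub-configuration to lie in $s^2R$, and $(F\calQ)^\antish(w)$ is cut out the same way inside $\calG(sE)^{(w)}$. Since the only $2$-vertex sub-configurations of a directed tree are themselves trees and $s^2R$ already lies in $\calT(sE)^{(2)}$, a tree-supported element of $\calG(sE)^{(w)}$ lies in $(F\calQ)^\antish(w)$ if and only if it lies in $\calQ^\antish(w)$; thus $\calQ^\antish(w) = (F\calQ)^\antish(w) \cap \calT(sE)^{(w)}$, and this realizes $I\calQ^\antish$ as a sub-coproperad of $(F\calQ)^\antish$ (compatibility with the decomposition is automatic, since un-grafting a tree-supported element produces only trees).

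The key claim I would then establish is that $(F\calQ)^\antish$ is a codioperad if and only if the inclusion $I\calQ^\antish \hookrightarrow (F\calQ)^\antish$ is an isomorphism, that is, if and only if $\dim (F\calQ)^\antish(w) = \dim \calQ^\antish(w)$ in each arity for every $w$. For one direction, if $(F\calQ)^\antish$ had a nonzero component supported on a graph $\Gamma$ of positive genus, then iterating the decomposition until all vertices are separated recovers $\Gamma$, so the decomposition cannot factor through $\calT\big((F\calQ)^\antish\big)$; conversely, if $(F\calQ)^\antish$ is tree-supported then $(F\calQ)^\antish = (F\calQ)^\antish \cap \calT(sE) = I\calQ^\antish$ and its decomposition stays among trees, so it is a codioperad. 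Dualizing --- and this is where the finite generation hypothesis is used, through $\calG(E)^{(w)}$ being finite-dimensional in each arity --- we have $(F\calQ)^\antish(w)^\vee \cong (F\calQ)^!(w)$ and $\calQ^\antish(w)^\vee \cong \calQ^!(w)$, so the condition becomes $\dim (F\calQ)^!(w) = \dim \calQ^!(w)$ in each arity for every $w$.

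It then remains to see that this is equivalent to the composite $\calQ^! \to UF(\calQ^!) \to U((F\calQ)^!)$ being an isomorphism of dioperads. Using the presentations recalled before the statement, $UF(\calQ^!) = \calG(s^{-1}E^\vee)/\langle s^{-2}R^\perp_0\rangle$ and $U((F\calQ)^!) = \calG(s^{-1}E^\vee)/\langle s^{-2}R^\perp\rangle$ with $R^\perp_0 \subseteq R^\perp$; moreover $R^\perp/R^\perp_0$ is concentrated in the positive-genus part of $\calG(s^{-1}E^\vee)^{(2)}$ (because $R$ pairs trivially with positive-genus graphs), and $F$ introduces no genus-zero relations, so in both quotients the tree-supported classes form a copy of $\calQ^! = \calT(s^{-1}E^\vee)/\langle s^{-2}R^\perp_0\rangle$, and the composite is exactly the inclusion of this copy into $U((F\calQ)^!)$. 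In particular the composite is injective, and as all the $\S$-bimodules involved are finite-dimensional in each arity, it is an isomorphism if and only if $\dim (F\calQ)^!(w) = \dim \calQ^!(w)$ in each arity for every $w$ --- precisely the condition of the previous paragraph. This proves the biconditional.

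For the last statement, I would use that for any codioperad $\calC$ there is a natural isomorphism of dg properads $\Omega_{\mathrm{prop}}(I\calC) \cong F\big(\Omega_{\mathrm{diop}}\,\calC\big)$: both are the free properad $\calG(s^{-1}\bar\calC)$ equipped with the unique derivation differential whose restriction to the generators is the desuspension of the decomposition of $\calC$ --- which takes values among trees precisely because $\calC$ is a codioperad --- and the two derivations therefore agree. Taking $\calC = \calQ^\antish$ and composing with $\Omega$ applied to the inclusion $I\calQ^\antish \hookrightarrow (F\calQ)^\antish$ produces the canonical comparison map $F(\Omega\calQ^\antish) \to \Omega(F\calQ)^\antish$; when $(F\calQ)^\antish$ is a codioperad the inclusion is an isomorphism by the first part, hence so is its cobar. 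The main obstacle is really the conceptual step that ``$(F\calQ)^\antish$ is a codioperad'' is equivalent to its underlying $\S$-bimodule being tree-supported, together with checking that the intersection description of Koszul duals behaves well under restriction to trees; the remaining steps --- linear duality, the behaviour of $F$ on presentations, and the cobar computation --- are routine bookkeeping with the constructions of \cite{Merkulov_2009}.
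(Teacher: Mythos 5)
Your proposal is correct and follows essentially the same route as the paper's (very terse) proof: the biconditional is obtained by linear duality in each arity using finite generation, and the cobar statement by identifying $F(\Omega\calQ^\antish)$ with the properadic cobar construction on the codioperad $\calQ^\antish$ (which the paper delegates to \cite[Proposition~44]{Merkulov_2009} and you verify directly). Your write-up simply supplies the details --- the identification of $\calQ^\antish$ with the tree-supported part of $(F\calQ)^\antish$, the injectivity of the composite into $U((F\calQ)^!)$, and the dimension count --- that the paper leaves implicit.
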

\begin{proof}
    The first statement follows from finite generation by linear duality, and the second statement follows from applying \cite[Proposition~44]{Merkulov_2009} to calculate that the underlying dg $\S$-bimodule of $F(\Omega \calQ^\antish)$ is exactly given by the cobar construction of $\calQ^\antish$; under the assumption we get the equality to $\Omega((F\calQ)^\antish)$, which implies the equivalence of categories.
\end{proof}

\begin{corollary}
    If $\calQ$ is a finitely-generated quadratic dioperad such that $(F\calQ)^\antish$ is a codioperad, then given any chain complex $A$, there is an isomorphism
    \[ \g^{\calQ,\mathrm{diop}}_{(A)} = \left(\Hom_{\S \otimes \S^\mathrm{op}}(\calQ^\antish,\End_A), \partial, \star \right) \cong \left(\Hom_{\S \otimes \S^\mathrm{op}}((F\calQ)^\antish,\End_A), \partial, \star \right) = \g^{\calQ,\mathrm{prop}}_{(A)} \]
    between the dg Lie-admissible algebras controlling (dioperadic) $Q$-algebras and (properadic) $F\calQ$-algebras.
\end{corollary}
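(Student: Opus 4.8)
The statement is meant to be a purely formal consequence of \cref{prop:contractible}, so the plan is to unwind both convolution algebras and match them term by term. First I would record what the hypothesis buys us at the level of $\S$-bimodules. By \cref{prop:contractible}, requiring that $(F\calQ)^\antish$ be a codioperad is the same as requiring that $\calQ^! \to U((F\calQ)^!)$ be an isomorphism of dioperads; since $\calQ$ is finitely generated and $U$ preserves underlying $\S$-bimodules, dualizing this gives an isomorphism of coaugmented $\S$-bimodules $\calQ^\antish \cong (F\calQ)^\antish$. This is exactly the identification already used inside the proof of \cref{prop:contractible}, where the underlying dg $\S$-bimodule of $F(\Omega\calQ^\antish)$ is computed to be the cobar construction of $\calQ^\antish$ and, under the assumption, is identified with $\Omega(F\calQ)^\antish$ as a \emph{dg properad}. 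In particular that comparison intertwines the infinitesimal decomposition maps: the a priori coproperadic infinitesimal decomposition of $(F\calQ)^\antish$ is valued in the single-edge (genus-zero) two-vertex summand, and there it coincides with the infinitesimal decomposition of the codioperad $\calQ^\antish$.

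Next I would transport the isomorphism $\calQ^\antish \cong (F\calQ)^\antish$ along the contravariant functor $\Hom_{\S\otimes\S^\op}(-,\End_A)$, obtaining an isomorphism of graded $R$-modules between $\g^{Q,\mathrm{diop}}_{(A)}$ and $\g^{Q\mathrm{prop}}_{(A)}$, and then check compatibility with the two pieces of structure. For the differential $\partial$: on either side it is induced solely by the internal differential of $A$ acting through $\End_A$, since the Koszul dual (co)dioperad $\calQ^\antish$ and coproperad $(F\calQ)^\antish$ of a quadratic datum carry no internal differential; as the $\S$-bimodule isomorphism is the identity on $\End_A$, it intertwines the two differentials. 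For the product $\star$: given $f,g$, it is the convolution product obtained by precomposing $f$ and $g$ with the infinitesimal decomposition of the source and postcomposing with the infinitesimal composition $\gamma_{(1)}$ of $\End_A$ — where on the dioperadic side one uses the variant that only sees single-edge two-vertex graphs. By the first paragraph, under the identification the infinitesimal decomposition of $(F\calQ)^\antish$ lands in precisely that single-edge piece and agrees there with the one of $\calQ^\antish$, while $\gamma_{(1)}$ of $\End_A$ is literally the same map; hence $\star$ is carried to $\star$, and the isomorphism is one of dg Lie-admissible algebras.

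I expect the only point that is not pure bookkeeping to be this collapse of the coproperadic infinitesimal decomposition of $(F\calQ)^\antish$ onto the codioperadic one of $\calQ^\antish$ — but that collapse is exactly the content of the hypothesis ``$(F\calQ)^\antish$ is a codioperad'', already packaged for us by \cref{prop:contractible}. As a consistency check one can also argue directly on Maurer--Cartan loci: \cref{prop:contractible} provides an isomorphism of dg properads $F(\Omega\calQ^\antish)\cong\Omega(F\calQ)^\antish$, so the Maurer--Cartan elements of $\g^{Q\mathrm{prop}}_{(A)}$, which are the dg properad morphisms $\Omega(F\calQ)^\antish\to\End_A$, correspond under the adjunction $F\dashv U$ to the dg dioperad morphisms $\Omega\calQ^\antish\to U\End_A$, i.e.\ to dioperadic $Q$-algebra structures; the computation above upgrades this bijection of Maurer--Cartan sets to the asserted isomorphism of the ambient dg Lie-admissible algebras.
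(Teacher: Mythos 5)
Your argument is correct and is exactly the (omitted) proof the paper intends: the corollary is stated without a proof as an immediate consequence of \cref{prop:contractible}, and your unwinding of the two convolution algebras --- identifying the underlying $\S$-bimodules by linear duality and observing that the codioperad hypothesis forces the infinitesimal decomposition of $(F\calQ)^\antish$ to land in the single-edge two-vertex component, where it agrees with that of $\calQ^\antish$ --- is precisely the bookkeeping being elided. No gaps.
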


\noindent Following the argument at the end of \cite{poirier2019koszuality}, it is proven in \cite{LV22} that the dioperad $\calV^{(n)}$ \emph{does not} satisfy the condition of \cref{prop:contractible}; in other words, the notions of properadic (i.e. all-genus) $\calV^{(n)}_\infty$-algebra and dioperadic (i.e. genus zero) $\calV^{(n)}_\infty$-algebra are genuinely different. In contrast, there are two dioperads related to $\calV^{(n)}$ that are known to satisfy the condition of \cref{prop:contractible}:
\begin{itemize}
    \item[$\centerdot$] the dioperad $\mathrm{DPois}$ governing double Poisson algebras \cite{LV22}, and
    \item[$\centerdot$] the dioperad $\mathrm{BIB}^\lambda$ governing (shifted) balanced infinitesimal bialgebras \cite{quesney2023balanced}.
\end{itemize}

\noindent Our dioperad $\calY^{(n)}$ is very close to what is denoted in \emph{op.cit.} by $\mathrm{BIB}^{1-n}$; the only difference is that we also have a generator in arity $(3,0)$, whose presence modifies the relations between the other generators. In other words, there is a quotient of quadratic dioperads $\calY^{(n)} \twoheadrightarrow \mathrm{BIB}^{1-n}$ killing the generator of arity $(3,0)$. We use this fact to prove that the condition of \cref{prop:contractible} is also satisfied by $\calY^{(n)}$. Let us first give an explicit presentation of the quadratic dual properad.
\begin{proposition}
    The quadratic dual properad of $F\calY^{(n)}$ is given by 
    \[ \left(F \calY^{(n)}\right)^! = \calG(s^{-1} E^\vee)/\langle s^{-2} R^\perp \rangle \] 
    where $R^\perp$ is generated by $R^\perp_0$ together with all the quadratic higher genus elements, for which the following four elements
    \begin{align*}
        \tikzfig{
            \node [vertex] (left) at (-1,0) {$\omega$};
            \node [vertex] (right) at (0,0) {$\nu$};
            \draw [->-] (-1.8,0) to (left);
            \draw [-w-,bend right=60] (left) to (right);
            \draw [->-,bend left=60] (left) to (right);
            \draw [->-] (right) to (0.8,0);
        } ~ ; ~ 
        \tikzfig{
            \node [vertex] (left) at (-1,0) {$\omega$};
            \node [vertex] (right) at (1,0) {$\nu$};
            \draw [->-] (-1.8,0) to (left);
            \draw [-w-,bend right=60] (left) to (0,0);
            \draw [->-,bend left=60] (left) to (0,0);
            \draw [bend right=60] (0,0) to (right); 
            \draw [bend left=60] (0,0) to (right); 
            \draw [->-] (right) to (1.8,0);
        } ~ ; ~
        \tikzfig{
            \node [vertex] (left) at (-1,0) {$\psi$};
            \node [vertex] (right) at (0,0) {$\nu$};
            \draw [->-] (left) to (-1.8,0);
            \draw [-w-,bend right=60] (left) to (right);
            \draw [->-,bend left=60] (left) to (right);
            \draw [->-] (right) to (0.8,0);
        } ~ ; ~ 
        \tikzfig{
            \node [vertex] (left) at (-1,0) {$\psi$};
            \node [vertex] (right) at (1,0) {$\nu$};
            \draw [->-] (left) to (-1.8,0);
            \draw [-w-,bend right=60] (left) to (0,0);
            \draw [->-,bend left=60] (left) to (0,0);
            \draw [bend right=60] (0,0) to (right); 
            \draw [bend left=60] (0,0) to (right); 
            \draw [->-] (right) to (1.8,0);
        }
    \end{align*}
    are generators.
\end{proposition}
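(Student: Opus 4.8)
The plan is to combine the general description of the quadratic dual of a free properad — recorded in the display before \cref{prop:contractible} — with a direct inspection of the weight-two decorated graphs. Since $F$ freely adjoins higher-genus compositions without higher-genus relations, $F\calY^{(n)} = \calG(E)/\langle R\rangle$ is quadratic, its space of quadratic relations being $R$ itself, now viewed inside $\calG(E)^{(2)}$ through the genus-zero inclusion $\calT(E)^{(2)}\hookrightarrow\calG(E)^{(2)}$. Hence the display gives
\[ \bigl(F\calY^{(n)}\bigr)^! \;=\; \calG\bigl(s^{-1}E^\vee\bigr)\big/\bigl\langle s^{-2}R^\perp\bigr\rangle , \]
with $R^\perp\subseteq\calG(E)^{(2)}$ the orthogonal complement of $R$ for the canonical pairing against $\calG(E^\vee)^{(2)}$, and all that is left is to make $R^\perp$ explicit.

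First I would use that this pairing is diagonal over the set of underlying two-vertex connected graphs: it pairs the $\Gamma$-summand of $\calG(E)^{(2)}$ with the $\Gamma$-summand of $\calG(E^\vee)^{(2)}$ and vanishes across distinct $\Gamma$'s. As every generator of $\calY^{(n)}$ is trivalent, a connected two-vertex graph has either one internal edge (genus zero, four outer legs) or two (genus one, two outer legs), so $\calG(E)^{(2)} = \calT(E)^{(2)}\oplus\calG(E)^{(2)}_{g=1}$, and likewise for $E^\vee$. Since $R\subseteq\calT(E)^{(2)}$, diagonality forces
\[ R^\perp \;=\; R^\perp_0 \;\oplus\; \calG\bigl(s^{-1}E^\vee\bigr)^{(2)}_{g=1} , \]
where $R^\perp_0$ is the genus-zero orthogonal complement, i.e.\ the $\S$-bimodule of relations defining $\calY^{(n)!}$ recalled above. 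This proves the first half of the statement.

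It then remains to pin down an $\S$-bimodule generating set of $\calG(E^\vee)^{(2)}_{g=1}$. A connected two-vertex graph of genus one has its two vertices joined by exactly two edges; since properadic graphs are directed and acyclic, these two edges cannot point in opposite directions, so they both run from a vertex carrying at least two outputs to one carrying at least two inputs. Among $\nu,\omega,\psi$ only $\nu$ has two inputs, while $\omega$ and $\psi$ are the generators with at least two outputs, so — after tracking the remaining legs — the only shapes are an $\omega$ above a $\nu$ with both outputs of $\omega$ attached to $\nu$ (outer arity $(1;1)$) and a $\psi$ above a $\nu$ with two of its three outputs attached to $\nu$ (outer arity $(2;0)$); in particular no genus-two graph occurs. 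For each shape the decoration space — a subquotient of $E^\vee(2;1)\otimes E^\vee(1;2)$, resp.\ of $E^\vee(3;0)\otimes E^\vee(1;2)$, by the internal $\S_2$-symmetry of $\nu$, the $\S_2$-symmetry of $\omega$, and the cyclic, $R_{\mathrm{sgn}}$-twisted structure of $\psi$ — is two-dimensional, with a basis given by the two matchings of the pair of internal edges; choosing one matching per shape recovers the four displayed elements, whose $\S\times\S$-orbits then span $\calG(E^\vee)^{(2)}_{g=1}$.

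The main obstacle is this last step: the sign-careful bookkeeping that realizes the decoration space of each of the two surviving shapes as exactly two-dimensional and matches a basis with the displayed elements, where one must keep precise track of the degrees $|\omega| = n-2$, $|\psi| = 2n-3$ and of the $R_{\mathrm{sgn}}$-twist, which control how the symmetric-group actions (in particular the cyclic action of $\S_3$ on $\psi$) behave. Everything preceding this is formal: the definition of the quadratic dual properad together with the elementary genus decomposition of the weight-two part.
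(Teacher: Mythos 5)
Your argument is correct and follows essentially the same route as the paper's (much terser) proof: since $R$ lies entirely in genus zero, orthogonality against the graph-by-graph pairing forces $R^\perp$ to contain the whole genus-one quadratic component, and the only admissible two-vertex genus-one shapes are $\omega$ over $\nu$ and $\psi$ over $\nu$ (each with its two edge-matchings), which account for the four displayed generators. The only quibble is a slip of wording near the end — it is \emph{both} matchings for \emph{each} of the two shapes, not ``one matching per shape,'' that yield the four elements — and the dimension count of the decoration spaces that you flag as remaining bookkeeping is precisely what the paper also leaves implicit in the phrase ``by symmetry.''
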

\begin{proof}
    Since the relations of $\calY^{(n)}$ are all in genus zero, all the quadratic higher genus relations are in the orthogonal complement. Then, the four elements above generate all of those by symmetry.
\end{proof}

\begin{theorem}\label{thm:codioperad}
    The coproperad $(F\calY^{(n)})^\antish$ is a codioperad.
\end{theorem}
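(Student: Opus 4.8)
The plan is to apply \cref{prop:contractible}: since $\calY^{(n)}$ is finitely generated and quadratic, it suffices to show that the canonical map $\calY^{(n)!}\to U((F\calY^{(n)})^!)$ is an isomorphism, i.e. that the graded properad $(F\calY^{(n)})^! = \calG(s^{-1}E^\vee)/\langle s^{-2}R^\perp\rangle$ is concentrated in genus zero. This map is always injective, because a relation of positive genus stays of positive genus after any grafting, so the only relations seen in genus zero are those generated by $R^\perp_0$ inside directed trees; hence the genus-zero part of $(F\calY^{(n)})^!$ is automatically $\calT(s^{-1}E^\vee)/\langle s^{-2}R^\perp_0\rangle = \calY^{(n)!}$, and the whole content is surjectivity, namely that every connected directed graph $\Gamma$ built from $\nu,\omega,\psi$ with first Betti number $\geqslant 1$ already vanishes in $(F\calY^{(n)})^!$.

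To prove this, recall from the preceding proposition that $R^\perp$ is generated by $R^\perp_0$ together with all weight-two graphs of positive genus, that is, all \emph{bigons} (two vertices joined by a pair of parallel edges), which in turn are generated from the four displayed elements by the $\S$-bimodule action; note that the only bigons that can occur are of types $\nu$–$\nu$, $\omega$–$\nu$, $\psi$–$\nu$, $\omega$–$\omega$ and $\psi$–$\omega$, since $\psi$ has no inputs. So it suffices to rewrite $\Gamma$, modulo the genus-zero relations $R^\perp_0$, as a signed sum of graphs each containing a bigon. Choose a spanning tree of $\Gamma$ and a co-tree edge $e$; it determines a unique cycle $c\subseteq\Gamma$, and $P\coloneqq c\setminus\{e\}$ is a directed path. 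Regard $P$ as a directed tree whose boundary legs are the two half-edges of $e$, together with, at each of its vertices, the single half-edge joining that vertex to the rest of $\Gamma$. By the computation in the proof of \cref{prop:subdioperad} — which is local and therefore applies to the sub-tree $P$ sitting inside $\Gamma$ — the relations in $R^\perp_0$ say precisely that reshaping such a directed tree through tetravalent moves changes it only by a sign, and that every directed tree with a prescribed boundary configuration is obtained this way. Applying these moves to $P$, we may therefore rewrite $\Gamma$, up to signs, as a sum of graphs in which the two half-edges of $e$ sit on two adjacent vertices of the reshaped path, joined by an internal edge oriented compatibly with $e$; regluing $e$ then produces a bigon between those two vertices.

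Each resulting graph thus contains a bigon, which by the above lies in $\langle s^{-2}R^\perp\rangle$, hence so does $\Gamma$. This shows $(F\calY^{(n)})^!$ is concentrated in genus zero and, with the first paragraph, proves the theorem. The sub-case of the reshaping argument involving only $\nu$ and $\omega$ is precisely the statement that $\mathrm{BIB}^{1-n}$ satisfies \cref{prop:contractible}, established in \cite{quesney2023balanced}; the genuinely new point — and the main obstacle — is to verify that the analogous tetravalent moves are available, equivalently that relations $(6)$–$(9)$ suffice, when $\psi$-vertices lie along the chosen cycle, while carefully tracking the orientation-reversing moves of \cref{prop:subdioperad} in the presence of the arity-$(3;0)$ generator. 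The remainder of the argument is formal bookkeeping with the ideal $\langle s^{-2}R^\perp\rangle$ in the free properad.
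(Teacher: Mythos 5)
Your reduction via \cref{prop:contractible} to the vanishing of the positive-genus part of $(F\calY^{(n)})^!$, and the observation that it would suffice to rewrite every positive-genus graph, modulo the genus-zero relations, as a sum of graphs containing a two-vertex positive-genus subgraph (all of which lie in $R^\perp$), are both correct. (Minor slip: a $\psi$--$\omega$ bigon cannot occur, since $\omega$ has a single input and $\psi$ has none, so no pair of parallel edges can join them.) The problem is the central step: cutting a cycle open along a co-tree edge $e$, invoking \cref{prop:subdioperad} to reshape the resulting directed tree $P$ until the two half-edges of $e$ sit on adjacent vertices, and regluing. \cref{prop:subdioperad} says that all \emph{realizable} trivalent directed trees with a given boundary sequence $S$ coincide up to sign; it does \emph{not} say that a realizable tree with the required adjacency exists among them. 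Realizability is constrained both by orientations (every vertex must have at least one outgoing half-edge and $\psi$ accepts no incoming ones, so not every redistribution of the legs of $P$ onto a new tree shape is admissible) and by planarity (the reshaping preserves the cyclic sequence $S$, so the two half-edges of $e$ can only be brought together along a splitting of $S$ into two contiguous arcs, each of which must be realizable with the relevant half-edge attached to the vertex carrying the connecting edge). You acknowledge this as ``the main obstacle'' but do not resolve it, and this is exactly where the content of the theorem lives: were this step automatic, it would also reprove in a few lines the vanishing of positive-genus $\mathrm{BIB}^!$-graphs, which the paper instead imports as a nontrivial input from \cite[Corollary~24]{quesney2023balanced}.

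For contrast, the paper's proof takes a more hands-on route: it first kills bigons (already generators of $R^\perp$), then decomposes any remaining positive-genus graph into $\mathrm{BIB}^!$-subgraphs built from $\nu,\omega$ joined by $\psi$-vertices, annihilates any positive-genus $\mathrm{BIB}^!$-piece by the cited result of Quesney, and finally, for each cycle passing through $\psi$-vertices, uses relations $(2)$--$(5)$ to slide a $\nu$ adjacent to each $\psi$ on the cycle (possible because each strand must contain a $\nu$, there being no sink vertices) and relations $(7)$--$(8)$ to eliminate the $\psi$'s from the cycle, reducing to a positive-genus $\mathrm{BIB}^!$-graph, which vanishes. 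To salvage your argument you would need to prove the adjacency-realizability claim directly, by an induction on splittings of $S$ that tracks the orientation constraints; as it stands the proof is incomplete.
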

\begin{proof}
    Let us fix $n$ and denote $\mathrm{BIB} = \mathrm{BIB}^{1-n}$. By \cref{prop:contractible} it is sufficient to prove that $(F\calY^{(n)})^!$ is trivial in higher genus, or in other words, that the submodule $\langle R^\perp \rangle$ contains every element given by a graph of genus $\geqslant 1$. We already know that $R^\perp$ contains every higher-genus graph with exactly two internal vertices, so we consider from now on graphs that have at least three internal vertices. Since $\calY^{(n)!}$ is generated by $\mathrm{BIB}^!$ and an extra generator $\psi$, every such graph is given by attaching $\psi$ vertices, of arity $(3,0)$, to collections of $\mathrm{BIB}^!$-graphs, such as:
    \[\tikzfig{
        \node [vertex,inner sep=8pt] (bib1) at (-1.73,1) {$\mathrm{BIB}^!$};
        \node [vertex,inner sep=8pt] (bib2) at (1.73,1) {$\mathrm{BIB}^!$};
        \node [vertex,inner sep=8pt] (bib3) at (0,-2) {$\mathrm{BIB}^!$};
        \node [vertex] (psi1) at (0,1.73) {$\psi$};
        \node [vertex] (psi2) at (1,-0.6) {$\psi$};
        \node [vertex] (psi3) at (-1,-0.6) {$\psi$};
        \draw [->-] (-3.2,1) to (bib1);
        \draw [->-] (bib1) to (-3,0);
        \draw [->-] (3.2,1) to (bib2);
        \draw [->-] (bib3) to (-1.5,-2);
        \draw [->-] (bib3) to (1.5,-2);
        \draw [-w-] (psi1) to (bib1);
        \draw [->-,bend right=60] (psi1) to (bib2);
        \draw [->-,bend left=60] (psi1) to (bib2);
        \draw [->-] (psi2) to (bib2);
        \draw [->-] (psi2) to (bib2);
        \draw [->-] (psi2) to (2,-0.6);
        \draw [-w-] (psi2) to (bib3);
        \draw [->-] (psi3) to (bib1);
        \draw [->-] (psi3) to (-1.8,-1.4);
        \draw [-w-] (psi3) to (bib3);
    }\]
    By \cite[Corollary~24]{quesney2023balanced}, every $\mathrm{BIB}^!$-graph with genus $\geqslant 1$ is zero, so it is enough to look at diagrams as above where each $\mathrm{BIB}^!$-subgraph is a tree, which we can put in planar form. If the graph has nonzero genus, there must be a cycle with $k \geqslant 1$ vertices $\psi$. %
    The $k$ strands connecting these $\psi$ vertices have sequences of $\omega$ and $\nu$ vertices. Since there can be no internal sink or vertices, that is, vertices with no outgoing edges, each of the $k$ strands must have at least one $\nu$ vertex. We then use the relations labeled (2) to (5) in our calculation of $\calY^{(n)!}$  above to `bring' each such $\nu$ vertex to be adjacent to a $\psi$ vertex, getting a cycle that looks like
    \[
    \tikzfig{
        \node [vertex] (psi1) at (0,2.7) {$\psi$};
        \node [vertex] (nu1) at (0.9,1.8) {$\nu$};
        \node [vertex] (psi2) at (2.7,0) {$\psi$};
        \node [vertex] (nu2) at (1.8,-0.9) {$\nu$};
        \node [vertex] (psi3) at (-2.7,0) {$\psi$};
        \node [vertex] (nu3) at (-1.8,0.9) {$\nu$};
        \node (dots1) at (1.8,0.9) {$\rotatebox[origin=c]{-45}{\dots}$};
        \node (dots2a) at (0,-0.9) {$\dots$};
        \node (dots2b) at (-1.5,-0.9) {$\dots$};
        \node (dots3) at (-0.9,1.8) {$\rotatebox[origin=c]{45}{\dots}$};
        \draw [->-] (psi1) to (nu1);
        \draw [->-] (nu1) to (dots1);
        \draw [->-] (psi1) to (dots3);
        \draw [->-] (psi2) to (nu2);
        \draw [->-] (nu2) to (dots2a);
        \draw [->-] (psi2) to (dots1);
        \draw [->-] (psi3) to (nu3);
        \draw [->-] (nu3) to (dots3);
        \draw [->-] (psi3) to (dots2b.180);
        \draw [->-] (1.5,2.4) to (nu1);
        \draw [->-] (1.2,-0.3) to (nu2);
        \draw [->-] (-2.4,1.5) to (nu3);
        \draw [->-] (psi1) to (-1,2.7);
        \draw [->-] (psi2) to (3.3,-0.6);
        \draw [->-] (psi3) to (-3.3,-0.6);
    }\]
    Note that this graph may not be embedded in the plane as in the example above. In any case, we can use relation (7) to eliminate all $\psi$ vertices from the cycle. Repeating the procedure until all cycles are gone, we get to a single $\mathrm{BIB}^!$-graph, possibly with $\psi$ vertices attached to it by a single edge, that is, not forming any cycles of the form above. Note that none of the relations changes the topology of the underlying graph, therefore the resulting $\mathrm{BIB}^!$-graph has the same genus $\geqslant 1$ we started with, and vanishes by the aforementioned result.
\end{proof}

\begin{remark}
   By \cref{prop:contractible} and \cref{thm:codioperad}, we have equivalent categories of (dioperadic) $\calY^{(n)}_\infty$-algebras and (properadic) $(F\calY^{(n)})_\infty$-algebras. We will, from now on, elide this distinction and just use $\calY^{(n)}_\infty$ to also refer to the properad, and work purely in the setting of properadic algebras. 
\end{remark}

\subsection{Pre-CY and $\calY^{(n)}_\infty$-structures on based loop spaces}\label{subsec:YinfinityBasedLoop}
Let us describe the class of $\calY^{(n)}$-algebras that will be our main source of examples. For that, we recall the relation between pre-Calabi--Yau and Calabi--Yau structures from \cite{KTV2}. Let $(A,\mu)$ be an $A_\infty$-algebra over $R$, which is \emph{smooth}, in the sense that its diagonal bimodule $A$ is perfect; in that case, we have a quasi-isomorphism
\[ CH_*(A) \xrightarrow{\simeq} \R\Hom^*_{A-A}(A^!,A) \]
between Hochschild chains and the derived bimodule morphisms between the inverse dualizing bimodule $A^!$ and the diagonal bimodule $A$. We recall that there is a canonical map $HC^-_*(A) \to HH_*(A)$ from negative cyclic to Hochschild homology.
\begin{definition}
    A (smooth) \emph{$n$-Calabi--Yau structure} on $A$ is a negative cyclic homology class 
    \[[\omega] \in HC^-_n(A)\] 
    whose image under the map above gives a quasi-isomorphism of bimodules $A^! \xrightarrow{\sim} A$.
\end{definition}

\noindent One should regard a smooth $n$-Calabi--Yau structure as something like a noncommutative $n$-shifted symplectic form. We now paraphrase a result of \cite{KTV2}, which upgrades the usual relation between symplectic forms and nondegenerate Poisson bivectors to this noncommutative level. The role of Poisson bivector is played by an element of $CH^*_{(2)}(A)$. If $\alpha$ is a closed element of degree $n$ in the complex \[(CH^*_{(2)}(A),[\mu,-]) \ ,\] we have a map of complexes
\[ g_\alpha \colon CH_{*}(A) \to CH^{-*}(A)[n] \]
given by evaluating a certain diagram, which is an analog of the map between forms and vector fields induced by a bivector.
\begin{definition}\label{def:compatibilityTwoOutputs}
    Let $A$ be a smooth $A_\infty$-algebra and $[\omega_0] \in HH_n(A)$ be a Hochschild homology class. We say that an element
    \[ \varphi_{(2)} \in CH^n_{(2)}(A) \]
    is \emph{compatible} with $[\omega_0]$ if it satisfies the equation $[g_{\varphi_{(2)}}(\omega_0)] \cong 1$ in $HH^0(A)$.
\end{definition}

\begin{theorem}[{\cite[Proposition~4.2 and Theorem~4.7]{KTV2}}]\label{thm:dualPreCY}
    For any smooth $A_\infty$-algebra $A$ and nondegenerate Hochschild homology class $[\omega_0] \in HH_n(A)$, there is a bijection between the following sets:
    \begin{enumerate}
        \item the set of lifts of $[\omega_0]$ to a smooth $n$-CY structure, that is, to a negative cyclic class $[\omega]$ in its preimage under the map $HC^-_*(A) \to HH_*(A)$, and 
        \item $\pi_0$ of the space of $n$-pre-CY structures $\varphi$ on $A$ whose component \[\varphi_{(2)} \in CH^n_{(2)}(A) \] is compatible with $\omega_0$. 
    \end{enumerate}
\end{theorem}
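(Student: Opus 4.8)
The plan is to prove this as the homotopy-coherent, noncommutative analogue of the classical correspondence between nondegenerate closed $2$-forms and nondegenerate Poisson bivectors: a nondegenerate negative cyclic class plays the role of a symplectic form, the bivector $\varphi_{(2)}$ plays the role of the inverse Poisson structure, the negative cyclic descent tower plays the role of closedness $\d\omega = 0$, and the higher pre-CY equation $[\varphi,\varphi]=0$ plays the role of the Jacobi identity. Throughout, the $\QQ$-algebra hypothesis is used to invert the symmetrisation projectors onto the $\left(\Z/\ell,n\right)$-isotypic components of the tangent complex.

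First I would establish the lowest-order statement (Proposition~4.2). Given the nondegenerate class $[\omega_0] \in HH_n(A)$, a compatible bivector is a $[\mu,-]$-closed degree $n$ element $\alpha = \varphi_{(2)} \in CH^*_{(2)}(A)$ such that the musical map $g_\alpha \colon CH_*(A) \to CH^{-*}(A)[n]$ satisfies $[g_\alpha(\omega_0)] \cong 1$ in $HH^0(A)$. I would check that closedness of $\alpha$ makes $g_\alpha$ a morphism of complexes, and then argue that the normalisation $[g_\alpha(\omega_0)] \cong 1$ is equivalent to $g_\alpha$ being a quasi-isomorphism. Since $A$ is smooth, this quasi-isomorphism is in turn equivalent to the induced map $A^! \to A$ being an equivalence, that is, to nondegeneracy of $[\omega_0]$. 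The key point is that, evaluated at $[\omega_0]$, the chain-level pairing and the bivector $\alpha$ become mutually inverse up to homotopy precisely when the normalisation holds.

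Next I would organise both sides as towers over this leading datum and match them order by order (Theorem~4.7). A negative cyclic lift is a series $\omega = \omega_0 + u\omega_1 + u^2 \omega_2 + \cdots$ with $(b+uB)\omega = 0$, which unpacks to $b\omega_0 = 0$ together with the descent relations $b\omega_k + B\omega_{k-1} = 0$ for $k \geqslant 1$. A compatible pre-CY structure is $\varphi = \varphi_{(1)} + \varphi_{(2)} + \varphi_{(3)} + \cdots$ with $\varphi_{(1)} = \mu$ and $\varphi_{(2)} = \alpha$ fixed, the remaining $\varphi_{(\ell)}$ being constrained only by $[\varphi,\varphi] = 0$. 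The heart of the argument is to use $g_\alpha$ and the pairing $\omega_0$ to translate between the $\ell$-fold Hochschild cochains and Hochschild chains, under which I would identify the higher component $\varphi_{(\ell+2)}$ with $\omega_\ell$ for $\ell \geqslant 1$, and show that the necklace-bracket equation $[\varphi,\varphi]=0$ becomes exactly the descent relation $b\omega_\ell + B\omega_{\ell-1} = 0$. This forces a precise matching of the Connes operator $B$ with the portion of the necklace product that cyclically closes up the output legs, which is where the cyclic $\left(\Z/\ell,n\right)$-symmetrisation of the tangent complex enters.

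The main obstacle is exactly this order-by-order identification of the two differentials: one must verify that the combinatorics of the necklace bracket, restricted to the $\left(\Z/\ell,n\right)$-isotypic pieces, reproduces the $b+uB$ differential of the negative cyclic complex after the musical translation by $g_\alpha$, with all Koszul signs and the extra $(-1)^{(n-1)(\ell-1)}$ factor correctly accounted for. The $\QQ$-algebra hypothesis is essential here, both to invert the isotypic projectors and to guarantee that the tower of descent equations can be solved inductively with no obstruction beyond the explicit relations, so that the two deformation problems are governed by quasi-isomorphic complexes. Granting this dictionary, the bijection on $\pi_0$ follows formally: gauge equivalences of compatible pre-CY structures correspond to the $b$-coboundary ambiguity in the choice of the $\omega_k$, so homotopy classes of compatible pre-CY structures match cohomology classes of negative cyclic lifts of $[\omega_0]$.
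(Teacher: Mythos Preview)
The paper does not prove this theorem at all: it is stated with the attribution \cite[Proposition~4.2 and Theorem~4.7]{KTV2} and used as a black box, with no proof given in the present paper. There is therefore nothing in this paper to compare your proposal against.

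Your outline is a reasonable high-level summary of the strategy one expects from \cite{KTV2}: treat the negative cyclic lift as a tower of descent equations $b\omega_k + B\omega_{k-1}=0$, treat the pre-CY extension as a tower governed by the necklace bracket, and match them via the musical map $g_\alpha$. That said, the proposal is a sketch rather than a proof, and one step is stated too optimistically. You assert a direct bijective correspondence $\varphi_{(\ell+2)} \leftrightarrow \omega_\ell$ under which the necklace equation becomes literally $b\omega_\ell + B\omega_{\ell-1}=0$. In \cite{KTV2} the matching is not this clean: the two towers are controlled by filtered $L_\infty$-algebras that are only quasi-isomorphic (not isomorphic), and the bijection on $\pi_0$ comes from a Goldman--Millson type argument for nilpotent $L_\infty$-algebras, not from an explicit term-by-term dictionary. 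The paper itself flags this subtlety in the remark immediately following the theorem, noting that the relevant notion of equivalence on the pre-CY side is \emph{more general than gauge equivalence} and requires modeling the space of structures simplicially. Your final paragraph, which identifies gauge equivalence with $b$-coboundary ambiguity, elides exactly this point.
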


\begin{remark} 
    In the theorem above, it is essential that the space of pre-CY structures be modeled as a certain simplicial space. We note that in this context the notion of `equivalence', that is, being in the same connected component, is more general than the notion of gauge equivalence. For more details see the discussion in section 4 of \emph{op.cit.}
\end{remark}

\noindent One source of such smooth Calabi--Yau structures is the algebra of chains on based loop spaces of \emph{local Poincar\'e duality pairs}, that is, topological spaces with a fundamental class giving Poincar\'e duality with local system coefficients. Let $X$ be any path-connected space with the homotopy type of a finite simplicial complex, and denote $A = C_*(\Omega X,R)$; this is a dg algebra, whose product is induced by concatenation of based loops.
\begin{theorem}[{\cite{holstein2024koszul}}]\label{thm:holstein}
    Let $R$ be any ring and $[X] \in H_n(X,R)$ any degree $n$ class, represented by some $n$-chain $\alpha_X$. If $(X,[X])$ is a local Poincar\'e duality pair of dimension $n$, in the sense that $\frown \alpha_X$ induces a quasi-isomorphism
    \[ C^*(X,\calL) \xrightarrow{\sim} C_{n-*}(X,\calL) \]
    for any local system $\calL$, then $\alpha_X$ induces a smooth Calabi--Yau structure on $A = C_*(\Omega X,R)$. Moreover, the class of this smooth CY structure in negative cyclic homology only depends on $[X]$ and not on the chosen representative $\alpha_X$.
\end{theorem}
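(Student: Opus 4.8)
The plan is to deduce the theorem from a Koszul duality between $A = C_*(\Omega X, R)$ and $C^*(X,R)$, converting the geometric Poincar\'e duality hypothesis into the algebraic smooth Calabi--Yau condition. First I would set up the duality: since $X$ has the homotopy type of a finite simplicial complex, $C^*(X,R)$ is a proper dg algebra, and a suitable derived form of Koszul duality (involving a completion when $X$ is not simply connected) provides an equivalence between a category of $A$-modules and the category $\Loc(X)$ of derived local systems on $X$, carrying the unit $A$ to the constant local system $\underline{R}_X$. Properness of $C^*(X,R)$, being exchanged with smoothness under Koszul duality, forces the diagonal $A$-bimodule to be perfect; hence $A$ is smooth, the inverse dualizing bimodule $A^! = \R\Hom_{A-A}(A, A\otimes_R A)$ is defined, and the smooth Calabi--Yau formalism recalled above applies.

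Second I would identify $A^!$ geometrically. Reversal of loops gives $A^{\mathrm{op}} \simeq A$, so $A$-bimodules are identified with modules over $C_*(\Omega(X\times X))$, hence with $\Loc(X\times X)$; under this identification the diagonal bimodule $A$ corresponds to $\Delta_*\underline{R}_X$, where $\Delta\colon X \to X\times X$ is the diagonal. Grothendieck duality along the closed inclusion $\Delta$ then identifies $A^!$, up to the shift fixed by the conventions, with the bimodule corresponding to $\Delta_*\omega_X$, where $\omega_X \coloneqq \Delta^!\underline{R}_{X\times X}$ is the (shifted) dualizing complex of $X$. The Poincar\'e duality hypothesis---the quasi-isomorphism $C^*(X,\calL)\xrightarrow{\sim} C_{n-*}(X,\calL)$ for all $\calL$, equivalently for the tautological local system of group-ring coefficients---says exactly that capping with $\alpha_X$ is an equivalence between $\omega_X$ and a shift of $\underline{R}_X$ by $n$; pushing it forward along $\Delta_*$ and transporting it back through Koszul duality produces a quasi-isomorphism of bimodules $A^!\xrightarrow{\sim} A$ of the type demanded by the definition. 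This is where local Poincar\'e duality is consumed.

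Third I would promote $\alpha_X$ to a \emph{negative cyclic} class. By \cite{Goo85} together with Koszul duality, $HH_*(A) \cong H_*(LX)$ and $HC^-_*(A)$ is identified with the $S^1$-equivariant homology $H^{S^1}_*(LX)$, compatibly with the forgetful map $HC^-_*\to HH_*$ and with the circle action rotating free loops. The inclusion of constant loops $c\colon X\hookrightarrow LX$ is $S^1$-equivariant for the trivial action on $X$, so $[X]\in H_n(X,R)$ determines a class in $H^{S^1}_n(LX)$ and hence a class $[\omega]\in HC^-_n(A)$ lifting $c_*[X]\in HH_n(A)\cong H_n(LX)$.

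The final and most delicate step is nondegeneracy: one must check that the bimodule morphism $A^!\to A$ extracted from $[\omega]$ by the smooth Calabi--Yau machinery---the contraction maps of \cite{KTV2} applied to the Hochschild class $c_*[X]$ under the identification $HH_n(A)\cong\R\Hom_{A-A}(A^!,A)$---agrees, via the Koszul duality of the second step, with the cap product against $\alpha_X$, so that the hypothesis makes it a quasi-isomorphism. Concretely this amounts to tracing the constant-loop class through the whole chain of equivalences and recognizing the resulting endomorphism of $\underline{R}_X$ as the identity, which in turn reduces to the unitality of $\frown \alpha_X$. I expect matching this abstract noncommutative-calculus construction with the concrete topological cap product to be the main obstacle. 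Once it is in place, $[\omega]$ is by definition a smooth $n$-Calabi--Yau structure on $A = C_*(\Omega X, R)$, which is the assertion of the theorem.
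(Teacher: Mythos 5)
The paper contains no proof of this statement: \cref{thm:holstein} is imported verbatim from \cite{holstein2024koszul}, so there is no internal argument to compare yours against. Measured against the proof in that reference, your architecture is the correct one: Koszul duality between $A = C_*(\Omega X,R)$ and $C^*(X,R)$ (with the completion caveat for non-simply-connected $X$), identification of $A$-bimodules with local systems on $X\times X$ and of the inverse dualizing bimodule with the pushforward along the diagonal of the dualizing complex of $X$, translation of the local-coefficient Poincar\'e duality hypothesis into a bimodule equivalence $A^!\xrightarrow{\sim} A$, and the lift of $c_*[X]$ to $HC^-_n(A)$ via the $S^1$-equivariance of the inclusion of constant loops into $LX$.

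That said, as a proof the proposal has two gaps. The lesser one is the smoothness step: ``properness of $C^*(X,R)$ is exchanged with smoothness under Koszul duality'' is not a clean two-way principle at this level of generality (compare $C_*(\Omega S^1)=R[t,t^{-1}]$, which is smooth but not proper, against its proper Koszul dual $C^*(S^1)$); the standard and more robust argument is that for $X$ a finite complex $C_*(\Omega X)$ is a finite cell dg algebra, built by iterated cell attachments indexed by the cells of $X$, and hence smooth directly. The serious one is the step you explicitly defer: verifying that the bimodule morphism $A^!\to A$ extracted from $c_*[X]$ via $HH_n(A)\cong \R\Hom^*_{A-A}(A^!,A)$ coincides, through the whole chain of equivalences, with capping against $\alpha_X$. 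That identification is where the Poincar\'e duality hypothesis is actually consumed and is the technical core of \cite{holstein2024koszul}; without it the hypothesis never enters the argument, so what you have is an accurate roadmap of the known proof rather than a proof.
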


\begin{remark}
    The theorem above generalizes previous results, in the case of manifolds, of \cite{Abb15,cohencalabi}.
\end{remark}

\begin{definition}\label{def:compatibilityPreCY}
    Let $(X,[X])$ be a local Poincar\'e duality pair of dimension $n$. We say that an $n$-pre-CY algebra structure $\varphi$ on $A = C_*(\Omega X,R)$ is \emph{compatible} with $[X]$ if $\varphi$ corresponds to the smooth CY structure of \cref{thm:holstein} under the bijection of \cref{thm:dualPreCY}.
\end{definition}

\noindent Recall that $n$-pre-CY structures are algebra structures over the dg dioperad $\calV^{(n)}_\infty$, which is a cofibrant resolution of the dioperad $\calV^{(n)}$ of \cite{poirier2019koszuality}. As mentioned in the introduction, we use the notion of formality discussed in \cite[Section~2]{Kaledin}. If $\calC$ is a weight-graded reduced cooperad/codioperad/coproperad, there is a morphism of operads/dioperads/properads
\[ \Omega\calC \to \calP = \left. \calT(s^{-1}\calC^{(1)}) \middle/ d_{\Omega\calC}(s^{-1}\calC^{(2)}) \right. \]
and any $\Omega\calC$-algebra structure on a complex induces a $\calP$-structure on its homology. One can define formality of such structures in the following sense.
\begin{definition}\cite[Definition~2.5]{Kaledin}\label{def:formality}
    A $\Omega\calC$-algebra $(A,\varphi)$ is said to be \emph{formal} if there is a zig-zag of quasi-isomorphisms of $\Omega\calC$-algebras
    \[ (A, \varphi) \overset{\sim}{\longleftarrow} \cdot \overset{\sim}{\longrightarrow} \cdots \overset{\sim}{\longleftarrow} \cdot \overset{\sim}{\longrightarrow} (H_*(A), \varphi_*) \ . \]
    A graded $\calP$-algebra $(H,\varphi_*)$ is said to be \emph{intrinsically formal} if, for any chain complex $A$ with $H_*(A)=H$ and $\Omega\calC$-algebra structure $\varphi$ on $A$ inducing $\varphi_*$ on $H$, the $\Omega\calC$-algebra $(A,\varphi)$ is formal.
\end{definition}

\noindent We could consider the setting where the morphism given by the cofibrant resolution of dioperads
\[ \calV^{(n)}_\infty = \Omega(\calV^{(n)})^\antish \to \calV^{(n)}. \]
However, it turns out that for the class of algebras we are interested in, the question of formality as $\calV^{(n)}_\infty$-algebras is not interesting, due to the following result.
\begin{theorem}\label{thm:notVinftyFormal}
    Let $(X,[X])$ be a local Poincar\'e duality pair of dimension $n \ge 1$. None of the $n$-pre-CY algebra structures on $A = C_*(\Omega X,R)$ that are compatible with $[X]$ are formal.
\end{theorem}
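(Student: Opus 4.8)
The plan is to exploit the fact, already observed in the introduction, that for a space $X$ of dimension $n\geqslant 1$ the algebra $A = C_*(\Omega X,R)$ is connective. By \cref{prop:connectiveVanishing}, any $n$-pre-CY structure $\varphi$ on $A$ automatically has vanishing copairing; equivalently, $\varphi$ is an $\Omega\C$-algebra structure, hence (by \cref{prop:isomorphismCodioperads}) a $\calY^{(n)}_\infty$-algebra structure. So the first step is to recall that a $\calV^{(n)}_\infty$-formality zig-zag for $(A,\varphi)$ would, by passing through the quotient $\calV^{(n)}_\infty \twoheadrightarrow \calY^{(n)}_\infty$, descend to a zig-zag of $\calY^{(n)}_\infty$-quasi-isomorphisms relating $(A,\varphi)$ to $(H(A),\varphi_*)$. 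Thus it suffices to show that \emph{no} $\calV^{(n)}_\infty$-structure compatible with $[X]$ admits such a zig-zag — and in fact the cleanest route is to show that there is \emph{no} $\calV^{(n)}_\infty$-quasi-isomorphism (or zig-zag thereof) $ (A,\varphi) \rightsquigarrow (H(A),\varphi_*)$ at all, by producing an invariant of $\calV^{(n)}_\infty$-quasi-isomorphism that distinguishes the two sides.

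The key step is the choice of invariant. The natural candidate is the copairing itself: recall that the copairing $\varphi_{(2)}^{0,0}\in CH^n_{(2)}(A)$ of a $\calV^{(n)}_\infty$-structure is, by \cref{thm:dualPreCY} and \cref{thm:holstein}, forced to be \emph{nondegenerate} (it is compatible with the nondegenerate Hochschild class $[\omega_0]$ coming from the fundamental class $[X]$, so $[g_{\varphi_{(2)}}(\omega_0)]\cong 1$). On $A = C_*(\Omega X,R)$ the copairing vanishes strictly because $A$ is concentrated in non-negative degrees and $\nu$ has degree $-n<0$; but on $H = H(A)$ — which is also concentrated in non-negative degrees — the induced $\calV^{(n)}_\infty$-structure \emph{also} has strictly vanishing copairing, so naively both sides look the same. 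The resolution is that a $\calV^{(n)}_\infty$-quasi-isomorphism (an $\infty$-morphism) induces an isomorphism not on the strict copairing but on the \emph{homology class} of the copairing viewed inside the tangent complex $CH^*_{[n]}(A)$, and more precisely it must carry a nondegenerate such class to a nondegenerate one. So the plan is: first show that any $\calV^{(n)}_\infty$-structure on a connective algebra has strictly vanishing copairing, hence its copairing class in $HH^n_{(2)}$ is represented by $0$ and is therefore \emph{degenerate} (it cannot induce an isomorphism $A^!\xrightarrow{\sim}A$, since $A^!$ lives in a genuinely different degree range for $n\geqslant 1$); then show that the $\calV^{(n)}_\infty$-structure on $A$ produced by \cref{thm:dualPreCY}/\cref{thm:holstein} has, by construction, a \emph{nondegenerate} copairing class; finally observe that a zig-zag of $\calV^{(n)}_\infty$-quasi-isomorphisms would identify these two classes, giving a contradiction. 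In other words, the compatibility with $[X]$ forces nondegeneracy, while connectivity forces the homology copairing to be zero, and nondegeneracy is an $\infty$-isomorphism invariant.

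I would organize the write-up as follows. Step (1): record that $A$ and $H(A)$ are connective, so by \cref{prop:connectiveVanishing} both $(A,\varphi)$ and $(H(A),\varphi_*)$ have strictly vanishing copairing and are in fact $\Omega\C$-algebras. Step (2): recall from the construction that the $\calV^{(n)}_\infty$-structure $\varphi$ on $A$ compatible with $[X]$ has, by \cref{thm:dualPreCY}, a component $\varphi_{(2)}$ whose associated map $g_{\varphi_{(2)}}(\omega_0)$ is a quasi-isomorphism — this is the precise sense in which its ``copairing data'' is nondegenerate — whereas the induced structure on $H(A)$ cannot have this property because its arity-$(2;0)$, degree-$(-n)$ component is literally zero and therefore $g_{0}(\omega_0)=0$ is not a quasi-isomorphism (here one uses $n\geqslant 1$ and that $H(A)\neq 0$). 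Step (3): recall that a zig-zag of $\calV^{(n)}_\infty$-quasi-isomorphisms induces an equivalence of the associated Maurer--Cartan moduli and in particular preserves the property ``the copairing component induces a quasi-isomorphism $A^!\simeq A$'' — this is an equivalence-invariant statement about the underlying smooth CY / pre-CY data. Step (4): conclude that formality would force a contradiction between Steps (2) and (3).

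The main obstacle will be Step (3): making precise that ``nondegeneracy of the copairing'' is preserved under $\calV^{(n)}_\infty$-$\infty$-quasi-isomorphisms, i.e. identifying the right homotopy-invariant formulation. I expect this to follow from the fact that a $\calV^{(n)}_\infty$-$\infty$-morphism $A \rightsquigarrow B$ induces, on the level of the controlling $\mathfrak g^{(n)}$-algebras and their Maurer--Cartan elements, a compatible map that is the identity on the $A_\infty$-part up to homotopy and hence identifies the induced maps $g_{\varphi_{(2)}}$ up to the corresponding quasi-isomorphism of Hochschild complexes — so that one transports a quasi-isomorphism $A^!\simeq A$ to a quasi-isomorphism $B^!\simeq B$. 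If one prefers to avoid this, there is a strictly more elementary fallback: by \cref{prop:connectiveVanishing} every $\calV^{(n)}_\infty$-structure on the connective algebra $A$ has vanishing copairing, so formality as a $\calV^{(n)}_\infty$-algebra is equivalent to formality as a $\calY^{(n)}_\infty$-algebra (\cref{prop:isomorphismCodioperads}), and then one can simply cite the later non-formality results of the paper for $\calY^{(n)}_\infty$-structures — but the self-contained argument via nondegeneracy of the copairing is more natural and is the one I would present.
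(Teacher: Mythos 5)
Your main argument is essentially the paper's proof: both compare the class of the two-output component in $HH^n_{(2)}$ on the two ends of the would-be formality zig-zag, using that compatibility with $[X]$ forces this class to be nonzero (indeed nondegenerate) for $(A,\varphi)$, while the induced \emph{strict} $\calV^{(n)}$-structure on $H_*(\Omega X,R)$ has its entire two-output part reduced to the arity-$(2;0)$ generator $\nu$, which vanishes by connectivity and $n\geqslant 1$, and that the zig-zag identifies these classes. Note that the paper only transports \emph{non-vanishing} of the class under the induced isomorphism on $HH^n_{(2)}$, which is weaker than your ``nondegeneracy is a quasi-isomorphism invariant'' and dissolves what you flag as the main obstacle in your Step (3). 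One caveat: in your middle paragraph you assert that vanishing of the copairing $\varphi^{0,0}_{(2)}$ on a connective algebra forces the class of $\varphi_{(2)}$ in $HH^n_{(2)}$ to be represented by zero; read literally this applies to $(A,\varphi)$ itself and contradicts your own Step (2) --- the correct statement, which your Steps (1)--(4) do use, is that only the strict $\calV^{(n)}$-structure on homology has no two-output components with inputs, whereas on $A$ the maps $\varphi^{k_1,k_2}_{(2)}$ with $k_1+k_2\geqslant 1$ survive and carry the nondegenerate class. Finally, the proposed fallback is not viable: $\calV^{(n)}_\infty$-formality and $\calY^{(n)}_\infty$-formality are genuinely different statements (the induced $\calV^{(n)}$-structure on homology retains only $\mu$ and $\nu=0$, while the induced $\calY^{(n)}$-structure retains the nonzero $\alpha$), and the later results of the paper establish that $\calY^{(n)}_\infty$-formality \emph{holds} for spheres over $\QQ$ (\cref{thm:sphereIntrinsicCoformality}), so citing them would point in the opposite direction.
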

\begin{proof}
    Consider any $n$-pre-CY whose $m_{(2)}$ component is compatible with the smooth CY structure. Compatibility implies that the class of $m_{(2)}$ is nonzero in $HH^n_{(2)}(A)$. Suppose for the sake of contradiction that $(A,m)$ is formal as a $\calV^{(n)}_\infty$-algebra, that is, that there is a zig-zag of $\calV^{(n)}_\infty$-quasi-isomorphisms between $(A,m)$ and its homology with induced $\calV^{(n)}$-structure $(H,n)$. In particular, this implies that 
    \begin{enumerate}
        \item $A$ is formal as an $A_\infty$-algebra, by giving a zig-zag of $A_\infty$-quasi-isomorphisms between $(A,m_{(1)})$ and $(H,n_{(1)})$, and
        \item the classes $[m_{(2)}] \in HH^n_{(2)}(A)$ and $[n_{(2)}] \in HH^n_{(2)}(H)$ are identified by the isomorphism on Hochschild homology induced by this zig-zag.
    \end{enumerate}
    However, since $n \ge 1$ and $H$ is concentrated in non-negative homological degree, the element $n_{(2)}$ vanishes, contradicting the fact that $[m_{(2)}]$ is nonzero.
\end{proof}

\noindent As a result, it would not make much sense to define coformality of local Poincar\'e pairs in terms of $\calV^{(n)}_\infty$-algebra structures. We propose the setting of $\calY^{(n)}_\infty$-algebra structures as a better setting for this definition. We first identify which structures to associate to a pair.
\begin{definition}\label{def:compatibilityYinfinity}
    Let $(X,[X])$ be a local Poincar\'e pair. A $\calY^{(n)}_\infty$-algebra structure $\varphi$ on $A = C_*(\Omega X,R)$ is \emph{compatible} with $[X]$ if it corresponds under \cref{prop:isomorphismCodioperads} to a compatible $n$-pre-CY structure. Likewise, a $\calY^{(n)}$-algebra structure on $H_*(A) = H_*(\Omega X,R)$ is \emph{compatible} with $[X]$ if it is induced by a compatible $\calY^{(n)}_\infty$-algebra structure on $A$.
\end{definition}

\begin{proposition}\label{prop:existenceCompatible}
    Let $(X,[X])$ be a local Poincar\'e duality pair of dimension $n \geqslant 1$. Then there exist $\calY^{(n)}_\infty$-algebra structures on $A = C_*(\Omega X,R)$ that are compatible with $[X]$.
\end{proposition}
\begin{proof}
    This proposition follows from \cref{thm:dualPreCY} and \cref{prop:isomorphismCodioperads}.
\end{proof}

\noindent We now define coformality of a local Poincar\'e pair in terms of $\calY^{(n)}_\infty$-algebra structures.
\begin{definition}\label{def:coformality}
    Let $(X,[X])$ be a local Poincar\'e duality pair of degree $n \geqslant 1$.  We say that the pair $(X,[X])$ is
    \begin{itemize}
        \item \emph{coformal} over $R$ when all $\calY^{(n)}_\infty$-algebra structures on $A = C_*(\Omega X,R)$ that are compatible with $[X]$ are formal as $\calY^{(n)}_\infty$-algebras, and
        \item \emph{intrinsically coformal} over $R$ when all $\calY^{(n)}$-algebra structures on $H_*(A) = H_*(\Omega X,R)$ that are compatible with $[X]$ are intrinsically formal as $\calY^{(n)}$-algebras.
    \end{itemize} 
\end{definition}

\noindent In fact, when assessing intrinsic coformality, it will often be sufficient for us to study a single $\calY^{(n)}$-algebra structure, due to the following fact.
\begin{proposition}\label{prop:compatibleUniqueness}
    Let $n \ge 2$ and $\varphi$ and $\varphi'$ be two $\calY^{(n)}_\infty$-algebra structures on $A = C_*(\Omega X,R)$ that are compatible with $[X]$. Then their induced $\calY^{(n)}$-algebra structures $\varphi_*$ and $\varphi'_*$ on $H_*(A) = H_*(\Omega X,R)$ are equal.
\end{proposition}
\begin{proof}
    We note that the $\calY^{(n)}$-algebra structure on $H_*(A)$ induced by any $\calY^{(n)}_\infty$-algebra structure $\varphi$ on $A$ only depends on the component $(\varphi_{(2)})^{1,0}$ with one input and two outputs. If $\varphi$ and $\varphi'$ are any two compatible $\calY^{(n)}_\infty$-algebra structures, their components $\varphi_{(2)}$ and $\varphi'_{(2)}$ must be compatible with the same class in $HH_n(A)$; so by \cite[Proposition~2.2]{KTV2} they represent the same class in $HH^n_{(2)}(A)$. In other words, there is some element $\psi \in CH^{n-1}_{(2)}(A)$ such that
    \[ \varphi'_{(2)} - \varphi_{(2)} = d\psi \]
    However, since $n \geqslant 2$ and $A$ is supported in non-negative degrees, the component without outputs $\psi^{0,0}$ vanishes and $(\varphi_{(2)})^{1,0} = (\varphi'_{(2)})^{1,0}$.
\end{proof}

\begin{remark} 
    In \cref{prop:existenceCompatible}, the assumption of characteristic zero is necessary due to the inductive procedure used and the symmetrization at each step used in \cite{KTV2}. Moreover, over an arbitrary commutative ring $R$, even when there are compatible $\calY^{(n)}_\infty$-algebra structures on $A$, they might not induce $\calY^{(n)}$-algebra structures on homology, due to the failure of the K\"unneth map to be an isomorphism.
\end{remark}

Since the dg Lie algebra governing $A_\infty$-structures sits as a subalgebra of the one governing $\calY^{(n)}_\infty$-algebra structures, we have the following result.
\begin{proposition}
    Let $(X,[X])$ be a space with degree  $n \geqslant 1$ Poincar\'e duality with local coefficients that has an induced $\calY^{(n)}$-algebra structure $\varphi_*$ on $H = H_*(\Omega X,R)$.  If it is coformal, then $X$ is coformal in the sense that the dg algebra $C_*(\Omega X,R)$ is formal as an $A_\infty$-algebra.
\end{proposition}

\section{\textcolor{bordeau}{Intermediate obstruction sequences to formality}}\label{sec:intermediateObstruction}

\noindent In \cite{CE24b}, the first-named author constructs obstruction sequences to formality of algebras encoded by properads, such as pre-CY algebras with vanishing copairing. In \cref{sec:formalityLoopSpaces}, we will compute these obstructions to study pre-CY coformality of spheres. To this end, we will exploit the existence of an additional filtration; in this section, we explain the general framework for refining obstruction sequences in the presence of that extra filtration.

\subsection{Intermediate gauge triviality sequences}\label{subsec:intermediateGauge} 
We will work with the following assumptions.
\begin{assumptions}\label{ass:gradings} \leavevmode
    \begin{enumerate}
        \item Let $(\g,[-,-])$ be a weight-graded Lie algebra over $R$, i.e. a complete Lie algebra (with vanishing differential) with an additional weight grading such that \[\mathfrak{g} \cong \prod_{k \geqslant 1} \mathfrak{g}^{(k)},  \quad \left[ \mathfrak{g}^{(k)},  \mathfrak{g}^{(l)} \right] \subset  \mathfrak{g}^{(k +l)}. \]
        We denote by $\cdot$ the gauge group action and the canonical projections by
        \[\pi_k : \mathfrak{g} \twoheadrightarrow  \mathfrak{g} /\mathcal{F}^k \mathfrak{g},\ \text{where}\ \mathcal{F}^n \mathfrak{g} \coloneqq  \prod_{k \geqslant n} \mathfrak{g}^{(k)} .\]
        \item Suppose that $\g$ has an an additional descending filtration 
        \[\g = \mathcal{L}^0 \g \supset \mathcal{L}^1 \g \supset \mathcal{L}^2 \g \supset \cdots  \] that is compatible with the Lie bracket, in the sense that $[\calL^i \g, \calL^j \g] \subset \calL^{i+j} \g$ for any $i,j \geqslant 0$, and that is bounded with respect to $\calF$, that is, for every $k \geqslant 1$, there exists $\delta_k$ such that $\mathcal{L}^{\delta_k} \mathfrak{g} \subseteq \mathcal{F}^k \mathfrak{g}$.
    \end{enumerate}
\end{assumptions}

\noindent Let $\varphi$ and $\psi$ be two Maurer--Cartan elements in $\mathfrak{g}$, such that $\psi$ is of homogeneous weight one.  Let us set $\mathfrak{h}$ for the dg Lie algebra twisted by $\psi$
\[\mathfrak{h} \coloneqq \left(\mathfrak{g},  [-,-], d^{\psi} \coloneqq [\psi,- ], \mathcal{F} \right) \ .\] 
We want to detect whether $\varphi$ and $\psi$ are gauge equivalent, or equivalently, whether $\phi \coloneqq \varphi - \psi$ is \emph{gauge trivial}, i.e. if there exists $\lambda \in \mathfrak{h}_0$ such that $\lambda \cdot \phi = 0 $. By \cite[Section~1]{CE24b}, even without the additional filtration $\calL$, this can be achieved through the construction of a \emph{gauge triviality sequence} 
\[\lbrace \vartheta_{k} \rbrace_{1 \leqslant k \leqslant n} \ , \]
which is either an infinite sequence of vanishing homology classes, when $n = \infty$~; or a finite sequence of trivial classes that ends on a nonvanishing class $\vartheta_{n}$ for some $n \geqslant 1$. This index $n$ of the last class only depends on $\phi$ and is called the \emph{gauge triviality degree} of $\phi$ \cite[Theorem~1.17]{CE24b}. This gauge triviality sequence is constructed by induction: we first set 
\[\phi_1 \coloneqq \phi \quad \mbox{and} \quad \vartheta_1 \coloneqq \left[ \pi_2 (\phi_1)\right] \in H_{-1}\left( \mathfrak{h} / \mathcal{F}^{2} \mathfrak{h}  \right) \ . \]
Let us suppose that $\phi_i$ and $\vartheta_i$ have been constructed for all $1 \leqslant i \leqslant k$
\begin{itemize}
    \item[$\centerdot$] If $\vartheta_k \neq 0$~, then $k$ is the gauge triviality degree of $\phi$, and we stop.
    \item[$\centerdot$] If $\vartheta_k = 0$~, there must exist $\upsilon_k \in \mathfrak{h}$ such that $\upsilon_k \cdot \phi_k \in  \mathcal{F}^{k+1} \mathfrak{h}$. We set  
    \[\phi_{k+1} \coloneqq \upsilon_k \cdot \phi_k \quad \mbox{and} \quad \vartheta_{k+1} \coloneqq \left[ \pi_{k+2} (\phi_{k+1})\right] \in H_{-1}\left( \mathfrak{h} / \mathcal{F}^{k+2} \mathfrak{h}  \right) , \]
    and we can continue to the next value of $k$.
\end{itemize}

\noindent It can be difficult to determine in practice whether the class $\vartheta_k$ vanishes or not. This is where having an additional filtration will help. 

\begin{proposition}\label{prop:obstru}
    Under Assumptions \ref{ass:gradings}, let $\xi$ be a Maurer--Cartan element in $\h$  such that \[ \xi \in \calL^i \calF^k \h + \calF^{k+1} \h  \] for some $i \geqslant 0$ and $k \geqslant 2$. Let us consider its class in the homology of the quotient \[ \vartheta_{k}^i \coloneqq [\Imm(\xi)] \in H_{-1}\left(\frac{\h}{\calL^{i+1} \calF^k\h + \calF^{k+1}\h}\right). \]
    The following assertions are equivalent.
    \begin{enumerate}
        \item The homology class $\vartheta_{k}^i$ vanishes.
        \item There exists $\upsilon \in \mathfrak{h}_0$ such that $\upsilon \cdot \xi \in  \calL^{i+1} \calF^k \h + \calF^{k+1}\h$.
    \end{enumerate}
\end{proposition}

\begin{proof}
    For all $\upsilon \in \mathfrak{h}_0$, the gauge action formula gives \begin{equation}\label{eq}
        \upsilon \cdot \xi \equiv  \xi - \frac{ e^{ \mathrm{ad}_{\upsilon}} - \mathrm{id}}{\mathrm{ad}_{\upsilon}} (d^{\psi} \upsilon)  \pmod{\calL^{i+1} \calF^k\h + \calF^{k+1}\h} \ .
    \end{equation}  
    If $\vartheta_{k}^i = 0$, then there exists $\upsilon \in \mathfrak{h}_0$ such that \[ \xi \equiv d^{\psi} \upsilon  \pmod{\calL^{i+1} \calF^k\h + \calF^{k+1}\h} \ .\] Since $\xi \in \mathcal{F}^k \mathfrak{h}$, this implies that $d^{\psi} \upsilon \in \mathcal{F}^k \mathfrak{h}$ and  $\upsilon \in \mathcal{F}^{k-1} \mathfrak{h}$. Since $k \geqslant 2$, we have \[ \frac{ e^{ \mathrm{ad}_{\upsilon}} - \mathrm{id}}{\mathrm{ad}_{\upsilon}} (d^{\psi} \upsilon) \equiv d^{\psi} \upsilon \pmod{\calL^{i+1} \calF^k\h + \calF^{k+1}\h} \ . \] \cref{eq} implies that $\upsilon \cdot \xi \in  \calL^{i+1} \calF^k \h + \calF^{k+1}\h \ .$ Conversely, if point (2) holds, we have \[ \xi  \equiv \frac{ e^{ \mathrm{ad}_{\upsilon}} - \mathrm{id}}{\mathrm{ad}_{\upsilon}} (d^{\psi} \upsilon)   \pmod{\calL^{i+1} \calF^k\h + \calF^{k+1}\h} \]  by \cref{eq}. Since $\xi \in \mathcal{F}^k \mathfrak{h}$, this implies that $d^{\psi} \upsilon \in \mathcal{F}^k \mathfrak{h}$ and  \[\xi  \equiv d^{\psi} \upsilon \pmod{\calL^{i+1} \calF^k\h + \calF^{k+1}\h}  \ , \] which leads to $\vartheta_{k}^i = 0$. 
\end{proof}

\begin{remark}\label{vanishing}
    If the obstruction class $\vartheta_k^i$ is nonvanishing then so is the obstruction class $\vartheta_k$. Indeed, the  former is the image of the latter under the map
    \[ H_{-1}\left(\frac{\h}{\calF^{k+1}\h}\right) \to  H_{-1}\left(\frac{\h}{\calL^{i+1} \calF^k\h + \calF^{k+1}\h}\right). \]
\end{remark}

\begin{construction} \label{constru}
Let $\xi \in \mathrm{MC}(\calF^{k} \h)$ for $k \geqslant 2$.  We aim to detect whether there exists $\lambda \in \h_0$ such that $\lambda \cdot \xi \in   \mathcal{F}^{k +1} \mathfrak{h}$, or equivalently whether the obstruction class \[ \vartheta_k =  \left[ \pi_{k+2} (\phi_{k+1})\right] \in H_{-1}\left( \mathfrak{h} / \mathcal{F}^{k+2} \mathfrak{h}  \right) , \] vanishes, see \cite[Proposition~1.6]{CE24b}. Let us set $\xi_0 \coloneqq \xi $  and consider the first obstruction \[ \vartheta_{k}^0 \coloneqq [\Imm(\xi_0)] \in H_{-1}\left(\frac{\h}{\calL^{1} \calF^k\h + \calF^{k+1}\h}\right). \]
	\begin{itemize}
		\item[$\centerdot$] If $\vartheta_{k}^0$ is non-zero, then so is $\vartheta_k$.
		\item[$\centerdot$] If $\vartheta_{k}^0 = 0$~, there exists $\upsilon_0 \in \mathfrak{h}_0$~, such that $\upsilon_0 \cdot \xi_0 \in \calL^{1} \calF^k \h + \calF^{k+1}\h$, by the implication $(1) \Rightarrow (2)$ of \cref{prop:obstru}.
	\end{itemize}
	If $\vartheta_k^0 = 0$~, we set $\xi_1 \coloneqq \upsilon_0 \cdot \xi_0$ and \[\vartheta_k^1 \coloneqq [\Imm(\xi_1)] \in H_{-1}\left(\frac{\h}{\calL^{2} \calF^k\h + \calF^{k+1}\h}\right). \]
	\begin{itemize}
		\item[$\centerdot$] If $\vartheta_k^1$  is non-zero, then so is $\vartheta_k$.
		\item[$\centerdot$] If $\vartheta_{k}^1 = 0$~, there exists $\upsilon_1 \in \mathfrak{h}_0$~, such that $\upsilon_1 \cdot \xi_1 \in \calL^{2} \calF^k \h + \calF^{k+1}\h$, by the implication $(1) \Rightarrow (2)$ of  \cref{prop:obstru}.
	\end{itemize}	
	\noindent The construction of such obstruction classes can be performed higher up in a similar way. This leads to a sequence of classes $\left(\vartheta_k^i \right)_{1 \leqslant i \leqslant \eta}$ which is either 
	\begin{itemize}
		\item[$\centerdot$] an infinite sequence of vanishing homology classes, when $\eta_k= \infty$~, or
		\item[$\centerdot$]  a finite sequence of trivial classes that ends on a nonzero class $\vartheta_{\eta}$~, when $\eta_k \in \mathbb{N}$~.
	\end{itemize}  
\end{construction}

\begin{remark}\label{bound}
    By Assumptions \ref{ass:gradings}, the filtration $\calL$ is bounded and there exists $\delta_{k+1}$ such that \[\calL^{\delta_{k+1}}\h \subseteq \calF^{k+1} \h \ .\] In the case where $\eta \in \mathbb{N}$ is finite, we have $\eta_k < \delta_{k+1} - 1$. In the case where $\eta_k = \infty$, the construction starts to be trivial at the level $k = \delta_{k+1} - 1$ where $\xi_k \in \calF^{k+1} \h $. 
\end{remark}

\begin{definition}\label{gauge triviality sequ}
	Let $\xi \in \mathrm{MC}(\h)$ be a Maurer--Cartan element such that $\xi \in \calF^{k} \h$ for $k \geqslant 2$. A \emph{$k^{th}$-intermediate gauge triviality sequence} of $\xi$ is an obstruction sequence \[\left(\vartheta_k^i \right)_{0 \leqslant i \leqslant \eta_k}   , \quad  \eta_k \in \lbrace 0 , \dots , \delta_{k+1}, \infty \rbrace \ ,  \] obtained through \cref{constru}. 
\end{definition}

\begin{lemma}\label{gauge deg}
    Let $\xi \in \mathrm{MC}(\calF^{k} \h)$ for $k \geqslant 2$. The index $\eta_k \in \lbrace 0 , \dots , \delta_{k+1}, \infty \rbrace $ of the last class of a $k^{th}$-intermediate gauge triviality sequence only depends on $\xi$, i.e. given \[ \left(\vartheta_k^i \right)_{0 \leqslant i \leqslant \eta_k} \quad \and  \quad  \left( \vartheta_k^{'i}\right)_{0 \leqslant i \leqslant \eta_k'}  \] two $k^{th}$-intermediate gauge triviality sequences, we have $\eta_k = \eta_k'$. This element is called the \emph{$k$-th intermediate gauge triviality degree} of $\xi$.
\end{lemma}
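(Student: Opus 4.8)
The plan is to prove that the index $\eta_k$ is an invariant of $\xi$ by a ``diamond''-style argument, showing that at each stage the vanishing or non-vanishing of the intermediate obstruction class does not depend on the choices of gauge transformations $\upsilon_0, \upsilon_1, \dots$ made along the way. This parallels the proof of the analogous statement for ordinary gauge triviality sequences in \cite[Section~1]{CE24b} (e.g. the well-definedness of the gauge triviality degree in \cite[Theorem~1.17]{CE24b}), adapted to the refined setting of \cref{obstru}.

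First I would fix $k \geqslant 2$ and proceed by induction on $i$, proving the following statement: if $\xi \in \mathrm{MC}(\calL^i \calF^k \h + \calF^{k+1}\h)$, then the vanishing of the class $\vartheta_k^i = [\Imm(\xi)] \in H_{-1}(\h / (\calL^{i+1}\calF^k\h + \calF^{k+1}\h))$ depends only on $\xi$, and moreover if it vanishes, then any two elements $\xi_{i+1} = \upsilon \cdot \xi$ and $\xi_{i+1}' = \upsilon' \cdot \xi$ with $\xi_{i+1}, \xi_{i+1}' \in \calL^{i+1}\calF^k\h + \calF^{k+1}\h$ (produced by the two choices) are related by a gauge transformation lying in an appropriate layer of the filtration. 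The first part is immediate: $\vartheta_k^i$ is literally defined from $\xi$ with no choices involved. The key is the second part: if $\upsilon \cdot \xi$ and $\upsilon' \cdot \xi$ both land in $\calL^{i+1}\calF^k\h + \calF^{k+1}\h$, then $\xi_{i+1}' = (\upsilon' \star \upsilon^{-1}) \cdot \xi_{i+1}$ where $\star$ denotes the Baker--Campbell--Hausdorff product, and one must check that the composite gauge $\lambda \coloneqq \upsilon' \star (-\upsilon)$ can be taken — or replaced by another gauge with the same effect on $\xi_{i+1}$ — inside $\h_0$ with the property that conjugation by it preserves the relevant subquotients. Concretely, using the gauge action formula as in Equation~(\ref{eq}) of the proof of \cref{obstru}, one shows $\lambda \cdot \xi_{i+1} \equiv \xi_{i+1} - d^\psi(\text{something in }\calF^{i+k}\h)$ modulo $\calL^{i+2}\calF^k\h + \calF^{k+1}\h$; since $k \geqslant 2$ and using the boundedness of $\calL$ with respect to $\calF$ (\cref{assump}(2)), the higher correction terms in $\frac{e^{\mathrm{ad}_\lambda}-\mathrm{id}}{\mathrm{ad}_\lambda}$ drop out, so the two successor elements $\xi_{i+1}, \xi_{i+1}'$ define the \emph{same} class $\vartheta_k^{i+1}$ in $H_{-1}(\h/(\calL^{i+2}\calF^k\h + \calF^{k+1}\h))$. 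This propagates the invariant: if one run stops at stage $i$ (because $\vartheta_k^i \neq 0$) then every run stops there, and if one run continues, they all produce the same next class.

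Then I would close the induction by noting that the terminating condition is intrinsic at each stage: $\vartheta_k^i$ is a specific class, so either it is zero for all runs or nonzero for all runs. Combining this with the claim that when $\vartheta_k^i = 0$ the successor class $\vartheta_k^{i+1}$ is independent of the choice of $\upsilon_i$, we conclude by induction that the whole sequence of classes $(\vartheta_k^i)$, and in particular the terminal index $\eta_k \in \{0, \dots, \delta_{k+1}, \infty\}$, is determined by $\xi$ alone. The case $\eta_k = \infty$ is handled by \cref{bound}: the construction becomes trivial once $\xi_i \in \calF^{k+1}\h$, which happens at $i = \delta_{k+1} - 1$ by the boundedness hypothesis, so both sequences stabilize (to the zero class) at the same finite index and agree thereafter.

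The main obstacle will be the bookkeeping in the second part of the inductive step: tracking which layer of the double filtration $\calL^\bullet \calF^\bullet$ the various error terms land in after applying the BCH product and the gauge action formula, and verifying that the nonlinear tail of $\frac{e^{\mathrm{ad}_\lambda}-\mathrm{id}}{\mathrm{ad}_\lambda}(d^\psi\lambda)$ is negligible modulo $\calL^{i+2}\calF^k\h + \calF^{k+1}\h$. This is exactly where the hypotheses $k \geqslant 2$, compatibility of $\calL$ with the bracket ($[\calL^i, \calL^j] \subset \calL^{i+j}$), and $\calF$-boundedness of $\calL$ are all used simultaneously, and it is essentially the same computation that already appears in the proof of \cref{obstru} — so the work is to assemble these estimates carefully rather than to invent a new idea.
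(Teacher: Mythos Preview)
Your approach is correct and aligns with what the paper intends: it simply says ``The proof is the same than the one of \cite[Lemma~1.9]{CE24b}'', and your diamond-style induction is precisely the kind of argument that reference points to.

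One remark on a slight over-claim and a shortcut you may have missed. You assert that the two successor elements $\xi_{i+1}$ and $\xi'_{i+1}$ define the \emph{same} class $\vartheta_k^{i+1}$; this is true provided the accumulated gauge $\lambda = \mathrm{BCH}(\upsilon',-\upsilon)$ lies in $\calF^{k-1}\h_0$, so that the nonlinear tail of $\frac{e^{\ad_\lambda}-\id}{\ad_\lambda}(d^\psi\lambda)$ lands in $\calF^{2k-1}\h \subset \calF^{k+1}\h$. Construction~\ref{constru} does not explicitly demand $\upsilon_j \in \calF^{k-1}\h_0$, but the proof of Proposition~\ref{obstru} shows one may always choose them there, so your estimate goes through once you make this reduction explicit. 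Alternatively, and more simply, you do not need equality of the classes at all: by Proposition~\ref{obstru}, the vanishing of $\vartheta_k^i(\xi_i)$ is equivalent to the gauge orbit of $\xi_i$ meeting $\calL^{i+1}\calF^k\h + \calF^{k+1}\h$, and since $\xi_i$ is gauge-equivalent to the original $\xi$, this is a property of the gauge orbit of $\xi$ alone. Hence $\eta_k$ is just the supremum of those $i$ for which the orbit of $\xi$ meets $\calL^{i}\calF^k\h + \calF^{k+1}\h$, which is manifestly independent of the run. This orbit-level reformulation bypasses the BCH bookkeeping entirely.
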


\begin{proof}
    The proof is the same than the one of \cite[Lemma~1.9]{CE24b}.
\end{proof}

\begin{theorem}\label{prop:bifilteredTriviality}
     Let $\xi \in \mathrm{MC}(\calF^{k} \h)$ for $k \geqslant 2$. The following assertions are equivalent. 
     \begin{enumerate}
         \item The $k$-th intermediate gauge triviality degree of $\xi$ is equal to $\infty$;
         \item The obstruction $ \vartheta_k =  \left[ \pi_{k+2} (\phi_{k+1})\right] \in H_{-1}\left( \mathfrak{h} / \mathcal{F}^{k+2} \mathfrak{h}  \right) , $ vanishes;
         \item There exists $\omega \in \h_0$ such that $\omega \cdot \xi \in \calF^{k+1} \h$. 
     \end{enumerate}
\end{theorem}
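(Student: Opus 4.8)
The plan is to prove the cycle of implications $(1) \Rightarrow (3) \Rightarrow (2) \Rightarrow (1)$, using Proposition \ref{obstru} as the workhorse for the first implication, Construction \ref{constru} for the organizing structure, and the existing characterization of $\vartheta_k$ from \cite{CE24b} (cited in Construction \ref{constru}) for closing the loop. The whole point of the $k$-th intermediate gauge triviality sequence is that it refines the single class $\vartheta_k$ into finitely many steps graded by the auxiliary filtration $\calL$, so the argument amounts to bookkeeping these steps and invoking the boundedness of $\calL$ with respect to $\calF$ (\cref{assump}(2) and \cref{bound}).

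\emph{$(1) \Rightarrow (3)$.} Assume the $k$-th intermediate gauge triviality degree of $\xi$ is $\infty$, so that for every $i \geqslant 0$ the class $\vartheta_k^i$ vanishes and Construction \ref{constru} produces a sequence $\xi_0 = \xi, \xi_1, \xi_2, \dots$ with $\xi_{i+1} = \upsilon_i \cdot \xi_i$ and $\xi_{i+1} \in \calL^{i+1}\calF^k\h + \calF^{k+1}\h$. By \cref{bound}, since $\calL$ is bounded relative to $\calF$, there is an index $\delta_{k+1}$ with $\calL^{\delta_{k+1}}\h \subseteq \calF^{k+1}\h$; hence once $i+1 \geqslant \delta_{k+1}$ we have $\xi_{i+1} \in \calF^{k+1}\h$. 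Composing the gauges used up to that stage, set $\omega \coloneqq \BCH(\upsilon_{\delta_{k+1}-1}, \dots, \upsilon_1, \upsilon_0) \in \h_0$, i.e. the element of the gauge group whose action is the composite $\upsilon_{\delta_{k+1}-1}\cdot(\cdots(\upsilon_0 \cdot -))$; since $\h_0$ is complete and closed under the group law, $\omega \in \h_0$ is well-defined. Then $\omega \cdot \xi = \xi_{\delta_{k+1}} \in \calF^{k+1}\h$, which is exactly $(3)$.

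\emph{$(3) \Rightarrow (2)$.} This is the direction that reuses the existing theory. By the reference recalled in Construction \ref{constru} (namely \cite[Proposition~1.6]{CE24b}), for $\xi \in \MC(\calF^k\h)$ the class $\vartheta_k = [\pi_{k+2}(\phi_{k+1})] \in H_{-1}(\h/\calF^{k+2}\h)$ vanishes if and only if there exists $\lambda \in \h_0$ with $\lambda \cdot \xi \in \calF^{k+1}\h$ — and the hypothesis $(3)$ provides exactly such a $\lambda = \omega$. (One should take $\phi_{k+1}$ here to be $\xi$ itself, since in the situation of the theorem $\xi$ already plays the role of the stage-$k$ residue; this matches the setup of Construction \ref{constru}.) So $(3) \Rightarrow (2)$ is immediate from loc. cit.

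\emph{$(2) \Rightarrow (1)$.} Conversely, suppose $\vartheta_k = 0$, so there is $\lambda \in \h_0$ with $\lambda \cdot \xi \in \calF^{k+1}\h \subseteq \calL^{i+1}\calF^k\h + \calF^{k+1}\h$ for every $i \geqslant 0$. We must show every intermediate class $\vartheta_k^i$ along \emph{any} run of Construction \ref{constru} vanishes; by \cref{gauge deg} it suffices to exhibit one run that never terminates. We argue by induction on $i$ that $\vartheta_k^i = 0$: at each stage we have built $\xi_i = \upsilon_{i-1}\cdot(\cdots \upsilon_0 \cdot \xi)$ with $\xi_i \in \calL^i\calF^k\h + \calF^{k+1}\h$, and we want $\upsilon_i \in \h_0$ with $\upsilon_i \cdot \xi_i \in \calL^{i+1}\calF^k\h + \calF^{k+1}\h$. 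Applying the equivalence $(1)\Leftrightarrow(2)$ of Proposition \ref{obstru} to $\xi_i$ (with the same $k$ and with $i$ as the $\calL$-index), it is enough to show $\vartheta_k^i = [\Imm(\xi_i)] = 0$ in $H_{-1}(\h/(\calL^{i+1}\calF^k\h + \calF^{k+1}\h))$. For this, note that $\xi_i$ is gauge equivalent to $\xi$ (via $\upsilon_{i-1},\dots,\upsilon_0$) and hence, by the same reasoning as in \cite[Proposition~1.6]{CE24b} or by \cref{vanishing} run in reverse, the vanishing of $\vartheta_k$ forces the vanishing of its image $\vartheta_k^i$ in the smaller quotient — more precisely, $\lambda \cdot \xi \in \calF^{k+1}\h$ means there is $\upsilon \in \h_0$ with $\xi_i \equiv d^\psi \upsilon \pmod{\calF^{k+1}\h}$ after composing gauges, and reducing mod $\calL^{i+1}\calF^k\h + \calF^{k+1}\h$ gives $\vartheta_k^i = 0$. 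The induction never stops, so the intermediate gauge triviality degree is $\infty$, giving $(1)$.

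\emph{Main obstacle.} The delicate point is the direction $(2) \Rightarrow (1)$: one must be careful that the vanishing of the single ``coarse'' class $\vartheta_k$ genuinely propagates to the vanishing of \emph{every} refined class $\vartheta_k^i$ produced along \emph{every} legitimate choice of gauges $\upsilon_j$ in Construction \ref{constru}, rather than just along one cleverly chosen branch. This is handled by combining \cref{gauge deg} (which says the terminating index is a branch-independent invariant) with Proposition \ref{obstru}, so that it suffices to follow a single branch; but one has to check that the gauge produced by $(2)$ can be massaged, stage by stage, into gauges of the form demanded by Construction \ref{constru}, i.e. each $\upsilon_i$ living in $\h_0$ and moving $\xi_i$ into the next term of the $\calL$-filtration modulo $\calF^{k+1}\h$. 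The compatibility $[\calL^i\h,\calL^j\h]\subseteq \calL^{i+j}\h$ and the boundedness relative to $\calF$ are exactly what make this massaging terminate after $\delta_{k+1}$ steps.
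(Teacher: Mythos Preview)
Your proposal is correct and follows essentially the same route as the paper: $(1)\Rightarrow(3)$ by composing the BCH of the gauges $\upsilon_i$ up to the bound $\delta_{k+1}$, $(3)\Leftrightarrow(2)$ by \cite[Proposition~1.6]{CE24b}, and $(2)\Rightarrow(1)$ via \cref{vanishing}. The only difference is that the paper dispatches $(2)\Rightarrow(1)$ in one line by contrapositive (if $\eta_k<\infty$ then $\vartheta_k^{\eta_k}\neq 0$, hence $\vartheta_k\neq 0$ by \cref{vanishing}), whereas you unwind this into a direct induction along a branch and appeal to \cref{gauge deg}; your argument is correct but the contrapositive is shorter and avoids having to track that the vanishing of $\vartheta_k$ is preserved along the gauge-equivalent sequence $\xi_0,\xi_1,\dots$.
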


\begin{proof}
    Let us prove the implication $(1) \Rightarrow (3)$. Let $\left(\vartheta_k^i \right)_{0 \leqslant i \leqslant \eta_k} $ be a $k$-th intermediate gauge triviality sequence of $\xi$ and let us denote by $\left(\upsilon_k \right)$ and $\left(\xi_k \right)$ the associated sequence of gauges and Maurer--Cartan elements given by \cref{constru}.  If $\eta_k = \infty$~, it follows from the construction that the gauge  \[\omega \coloneqq \mathrm{BCH}(\upsilon_{\delta_{k+1} -1}, \mathrm{BCH}(\cdots \mathrm{BCH}(\upsilon_{2}, \upsilon_{1})) \cdots ) \ , \] satisfies $\omega \cdot \varphi \in \mathcal{F}^{k+1} \h$, see \cref{bound}. The equivalence $(2) \Leftrightarrow (3) $ is given by \cite[Proposition~1.6]{CE24b}. The implication $(2) \Rightarrow (1)$ follows from \cref{vanishing}. Indeed, if the $k$-th intermediate gauge triviality degree $\eta_k$ of $\xi$ is not equal to $\infty$ and \[\vartheta^{\eta_k}_k \neq 0 \implies \vartheta_k \neq 0 \ . \qedhere\]  
\end{proof}

\begin{lemma}\label{prop:dependance}
    Let $\phi \in \mathrm{MC}( \calF^2\h)$ and suppose that there exist $\omega, \omega' \in \mathfrak{h}_0$  such that $\omega \cdot \phi$ and $\omega' \cdot \phi$ are both in $\calF^k \h$. Then, they have equal $k$-th intermediate gauge triviality degrees.
\end{lemma}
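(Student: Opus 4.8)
The plan is to reduce this to a gauge-invariance property of the $k$-th intermediate gauge triviality degree, and then to establish that property by characterizing $\eta_k$ in a way that visibly depends only on the gauge orbit of a Maurer--Cartan element. Concretely, first I would set $\nu \coloneqq \BCH(\omega',-\omega) \in \h_0$ and observe that, since the gauge action is an action of the group $(\h_0,\BCH)$, one has $\omega' \cdot \phi = \nu \cdot (\omega \cdot \phi)$. Writing $\xi \coloneqq \omega \cdot \phi$ and $\xi' \coloneqq \omega' \cdot \phi$, these are gauge-equivalent Maurer--Cartan elements lying in $\calF^k\h$ (with $k \geqslant 2$, as required for the intermediate sequences to make sense). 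It thus suffices to prove: if $\xi,\xi' \in \mathrm{MC}(\calF^k\h)$ satisfy $\xi' = \nu \cdot \xi$ for some $\nu \in \h_0$, then $\eta_k(\xi) = \eta_k(\xi')$.

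The key step, which I would carry out next, is the characterization: for every $j \geqslant 1$,
\[ \eta_k(\xi) \geqslant j \quad \Longleftrightarrow \quad \text{there exists } \upsilon \in \h_0 \text{ with } \upsilon \cdot \xi \in \calL^j\calF^k\h + \calF^{k+1}\h \ . \]
The implication $(\Rightarrow)$ is immediate: running an instance of \cref{constru} up to stage $j$ produces gauges $\upsilon_0,\dots,\upsilon_{j-1}$ with $\xi_j = \upsilon_{j-1}\cdot(\cdots(\upsilon_0 \cdot \xi)) \in \calL^j\calF^k\h + \calF^{k+1}\h$, so $\upsilon \coloneqq \BCH(\upsilon_{j-1},\BCH(\dots,\upsilon_0))$ works. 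For $(\Leftarrow)$ I would show, by induction on $i = 0,\dots,j-1$, that \emph{every} run of \cref{constru} has $\vartheta_k^i = 0$: at stage $i$ one has $\xi_i = w_i \cdot \xi$ for some $w_i \coloneqq \BCH(\upsilon_{i-1},\BCH(\dots,\upsilon_0)) \in \h_0$ with $\xi_i \in \calL^i\calF^k\h + \calF^{k+1}\h$; given a witness $\upsilon$ as on the right-hand side, the element $\BCH(\upsilon,-w_i) \in \h_0$ satisfies $\BCH(\upsilon,-w_i)\cdot \xi_i = \upsilon \cdot \xi \in \calL^j\calF^k\h + \calF^{k+1}\h \subseteq \calL^{i+1}\calF^k\h + \calF^{k+1}\h$ since $i+1 \leqslant j$, so $\vartheta_k^i = 0$ by the implication $(2)\Rightarrow(1)$ of \cref{obstru}. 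As $\vartheta_k^0 = \dots = \vartheta_k^{j-1} = 0$ in every run, \cref{gauge deg} gives $\eta_k(\xi) \geqslant j$.

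To conclude, I would observe that the right-hand condition of the displayed equivalence is manifestly gauge-invariant: if $\xi' = \nu \cdot \xi$ then $\upsilon \cdot \xi' = \BCH(\upsilon,\nu)\cdot \xi$, so a witness $\upsilon$ for $\xi'$ yields the witness $\BCH(\upsilon,\nu)$ for $\xi$ and conversely. Hence the set $\{\, j \geqslant 1 : \exists\, \upsilon \in \h_0,\ \upsilon \cdot \xi \in \calL^j\calF^k\h + \calF^{k+1}\h \,\}$ coincides for $\xi$ and for $\xi'$; by the characterization this set equals $\{1,\dots,\eta_k(\xi)\}$ when $\eta_k(\xi)$ is finite and equals $\Z_{\geqslant 1}$ when $\eta_k(\xi) = \infty$ (the infinite case can alternatively be read off from the equivalence $(1)\Leftrightarrow(3)$ of \cref{prop:bifilteredTriviality}), so $\eta_k(\xi) = \eta_k(\xi')$. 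Applied to $\xi = \omega \cdot \phi$ and $\xi' = \omega' \cdot \phi$, this is exactly the claim. I expect the main obstacle to be the bookkeeping in the $(\Leftarrow)$ direction: one must invoke that ``$\eta_k \geqslant j$'' is independent of the choices made inside \cref{constru} (this is precisely \cref{gauge deg}) and check that the repeated use of associativity and invertibility of $\BCH$ in the complete filtered Lie algebra $\h$ is legitimate; everything else is a formal consequence of \cref{obstru}.
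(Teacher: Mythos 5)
Your proposal is correct and follows essentially the same route as the paper: both arguments come down to the observation that, via the $\BCH$ group structure on $\h_0$, a gauge witnessing $\upsilon\cdot\xi\in\calL^{j}\calF^k\h+\calF^{k+1}\h$ can be transported between the gauge-equivalent elements $\omega\cdot\phi$ and $\omega'\cdot\phi$, and then \cref{obstru} forces their degrees to agree (the paper phrases this as a proof by contradiction rather than through your explicit characterization of $\eta_k\geqslant j$). Your inductive argument in the $(\Leftarrow)$ direction spells out a step the paper's proof leaves implicit, but it is the same underlying mechanism.
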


\begin{proof}
    Suppose by contradiction that the $k$-th intermediate gauge triviality degree of $\omega \cdot \phi$ is $\eta_k \in \{0,1,\dots,\infty\}$, and that of $\omega \cdot \phi$ is a finite number such that $\eta_k' < \eta_k$.  By construction, there exists $\lambda$ such that $\lambda \cdot (\omega \cdot \phi) \in \calL^{\eta_k'+1} \calF^k \h + \calF^{k+1} \h$. But we can rewrite this element as 
    \[ \lambda \cdot (\omega \cdot \phi) = \mathrm{BCH}(\lambda,\omega)\cdot \phi = \mathrm{BCH}(\mathrm{BCH}(\lambda,\omega), - \omega') \cdot (\omega' \cdot \phi) \ . \]
    By \cref{prop:obstru}, the $k$-th intermediate gauge triviality degree of $\omega' \cdot \phi$ must be strictly larger than $\eta_k'$, which is impossible. The case $\eta_k < \eta_k'$ is treated analogously.
\end{proof}

\begin{definition}
    Let $\phi \in \mathrm{MC}( \calF^2\h)$ be a Maurer--Cartan element and suppose that there exist $\omega \in \mathfrak{h}_0$ such that $\omega \cdot \phi \in \mathcal{F}^k \h$. The \emph{$k$-th intermediate gauge triviality degree} $\phi$ is defined and is the one of $\omega \cdot \phi$. 
\end{definition}

\begin{theorem}\label{thm:gaugeTriviality}
    Under Assumptions \ref{ass:gradings}, the following assertions are equivalent. 
    \begin{enumerate}
        \item The Maurer--Cartan element $\varphi$ is gauge equivalent to its first component $\varphi^{(1)}$. 
        \item The gauge triviality degree of $\phi \coloneqq \varphi - \varphi^{(1)}$ is equal to $\infty$.
        \item For all $k \geqslant 2$, the $k^{th}$-intermediate gauge triviality degree of $\phi$ is defined, and equal to $\infty$. 
    \end{enumerate}   
\end{theorem}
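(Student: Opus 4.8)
The plan is to reduce $(1) \Leftrightarrow (2)$ to \cite[Theorem~1.17]{CE24b} after recording how twisting interacts with the gauge action, and then to obtain $(2) \Leftrightarrow (3)$ from \cref{prop:bifilteredTriviality} together with \cref{prop:dependance}. For the first equivalence, I would begin by noting that $\varphi^{(1)}$ is itself a Maurer--Cartan element: since the differential of $\g$ vanishes, the weight-two component of $[\varphi,\varphi]=0$ is exactly $[\varphi^{(1)},\varphi^{(1)}]=0$, so $\psi \coloneqq \varphi^{(1)}$ is a legitimate choice in \cref{assump} and $\phi = \varphi - \varphi^{(1)}$ is the element $\phi = \varphi - \psi$ appearing there. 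A one-line computation using $[\psi,\lambda] = -\ad_\lambda\psi$ and $[\psi,\psi]=0$ shows that $\xi \mapsto \psi + \xi$ is a bijection $\MC(\h) \xrightarrow{\sim} \MC(\g)$ intertwining the gauge actions, in the sense that $\lambda \cdot_{\g} (\psi + \xi) = \psi + (\lambda \cdot_{\h} \xi)$ for every $\lambda \in \g_0 = \h_0$. In particular $\phi$ lies in $\MC(\calF^2\h)$, and $\lambda \cdot_{\g} \varphi = \varphi^{(1)}$ if and only if $\lambda \cdot_{\h} \phi = 0$; thus $(1)$ says precisely that $\phi$ is gauge trivial in $\h$, which by \cite[Theorem~1.17]{CE24b} holds if and only if the gauge triviality degree of $\phi$ equals $\infty$, that is, if and only if $(2)$ holds.

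For $(2) \Rightarrow (3)$, I would assume the gauge triviality degree of $\phi$ is $\infty$, so the gauge triviality sequence never terminates. For each $k \geqslant 2$ its construction then produces a gauge $\omega_k \in \h_0$ (obtained by composing the gauges of the first $k-1$ steps) with $\phi_k \coloneqq \omega_k \cdot \phi \in \MC(\calF^k\h)$, together with a further gauge $\upsilon_k \in \h_0$ satisfying $\upsilon_k \cdot \phi_k = \phi_{k+1} \in \calF^{k+1}\h$. In particular the $k$-th intermediate gauge triviality degree of $\phi$ is defined, and by \cref{prop:dependance} it agrees with that of $\phi_k$. Applying \cref{prop:bifilteredTriviality} to $\xi = \phi_k$, the existence of $\upsilon_k$ (its condition $(3)$) forces the $k$-th intermediate gauge triviality degree of $\phi_k$ — hence of $\phi$ — to be $\infty$. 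As $k$ was arbitrary, this is $(3)$.

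For $(3) \Rightarrow (2)$, I would assume $(3)$ and prove by induction on $k \geqslant 1$ that the obstruction $\vartheta_k$ of the gauge triviality sequence of $\phi$ vanishes, so that the sequence never terminates and $(2)$ holds. One has $\vartheta_1 = [\pi_2(\phi)] = 0$ because $\phi \in \calF^2\h$, and we set $\phi_2 \coloneqq \phi$. Assuming $\vartheta_1, \dots, \vartheta_{k-1}$ vanish, the construction yields $\phi_k = \omega \cdot \phi \in \MC(\calF^k\h)$ for some $\omega \in \h_0$; by \cref{prop:dependance} the $k$-th intermediate gauge triviality degree of $\phi_k$ equals that of $\phi$, which is $\infty$ by $(3)$. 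The implication $(1) \Rightarrow (3)$ of \cref{prop:bifilteredTriviality} then gives $\upsilon_k \in \h_0$ with $\upsilon_k \cdot \phi_k \in \calF^{k+1}\h$, and by \cite[Proposition~1.6]{CE24b} this is exactly the assertion that $\vartheta_k = 0$; setting $\phi_{k+1} \coloneqq \upsilon_k \cdot \phi_k$, the induction proceeds.

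The substance of the argument lies in $(2) \Leftrightarrow (3)$, the equivalence $(1) \Leftrightarrow (2)$ being essentially the cited result of \cite{CE24b} once twisting is dealt with. The main thing to be careful about is the bookkeeping: tracking which gauge-translate of $\phi$ sits in which piece $\calF^k\h$, and checking that the obstruction class invoked through \cref{prop:bifilteredTriviality} for a translate $\phi_k$ is genuinely the $k$-th obstruction $\vartheta_k$ of $\phi$ — which it is, since that class measures nothing more than whether $\phi_k$ can be gauged from $\calF^k\h$ into $\calF^{k+1}\h$, independently of how $\phi_k$ was reached.
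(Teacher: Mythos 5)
Your proposal is correct and follows essentially the same route as the paper: $(1)\Leftrightarrow(2)$ is delegated to \cite[Theorem~1.17]{CE24b}, and $(2)\Leftrightarrow(3)$ is obtained from \cref{prop:bifilteredTriviality} together with the well-definedness of the intermediate degrees (\cref{prop:dependance}); the paper merely compresses your explicit two-direction induction into a citation of \cite[Theorem~1.11]{CE24b} plus the observation that, by \cref{prop:bifilteredTriviality}, the $(k+1)$-th intermediate degree is defined iff the $k$-th is defined and infinite.
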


\begin{proof}
    The equivalence $(1) \Leftrightarrow (2)$ is given by \cite[Theorem~1.17]{CE24b}. Let us prove the equivalence $(2) \Leftrightarrow (3)$. The $k^{th}$-intermediate gauge triviality degree is defined for all $k \geqslant 2$ if and only if there exists $\omega_k \in \mathfrak{h}_0$ such that $\omega_k \cdot \phi \in \mathcal{F}^k\mathfrak{h}$. By \cite[Theorem~1.11]{CE24b}, this is equivalent to the gauge triviality degree of $\phi$ being infinite. Now, by \cref{prop:bifilteredTriviality}, for each $k \geqslant 1$, the $(k+1)^{th}$-intermediate gauge triviality degree is defined if and only if the $k^{th}$-intermediate gauge triviality degree is defined and infinite. 
 \end{proof}

\subsection{Rigidity criteria}
Let us recall the notion of rigidity of a Maurer--Cartan element.
\begin{definition}\label{def:rigid}
    A Maurer--Cartan element $\psi$ concentrated in weight one is \emph{rigid} when any of its deformations is gauge-trivial, that is, when any Maurer--Cartan element $\varphi$ satisfying $\varphi^{(1)} =  \psi$ is gauge equivalent to $\psi$. 
\end{definition}

\noindent The obstruction theory of Maurer--Cartan solutions gives the following characterization.
\begin{proposition}[{\cite[Corollary~1.18]{CE24b}}]
    Let $\mathfrak{g}$ be a weight-graded dg Lie algebra, and $\psi$ a Maurer--Cartan element $\psi$ concentrated in weight one. If
    \[\mathcal{F}^{2} H_{-1}(\mathfrak{g}^{\psi}) = 0 \]
    then $\psi$ is automatically rigid.
\end{proposition}

\noindent In the presence of an extra filtration $\calL$, we can refine the rigidity criterion above. We first consider the following notion of rigidity with respect to the extra filtration.
\begin{definition}\label{def:Lrigidity}
    The element $\psi$ is \emph{$\calL^m$-rigid} for some $m \geqslant 0$ if every $\phi \in \mathrm{MC}(\g^\psi)$ satisfying $\phi \in \calF^2\g^\psi$ is gauge-equivalent to some element in $\calL^{m+1} \g^\psi$.
\end{definition}

\begin{proposition}\label{prop:intermediateToTotalRigid}
    Under Assumptions \ref{ass:gradings}, if $\psi$ is $\calL^0$-rigid, and if the image of the map 
    \[ \calF^2 H_{-1}(\gr_\mathcal{L}^i \g^\psi) \longrightarrow H_{-1}(\g^\psi/\mathcal{L}^{i+1}\g^\psi)\] 
    vanishes for every $i \geqslant 1$, then $\psi$ is rigid.
\end{proposition}
\begin{proof}
    Let us consider some Maurer--Cartan element $\varphi$ such that $\varphi^{(1)} = \psi$, and again denote $\h =\g^\psi$. Since $\phi = \varphi - \psi \in \mathrm{MC}(\h)$ is in $\calF^2\h$ and $\psi$ is $\calL^0$-rigid, we can assume $\phi \in \calL^i\calF^k\h$, for some $i \geqslant 1$ and $k \geqslant 2$. The Maurer--Cartan equation then implies that
    \[ d^\psi \phi = -\frac{1}{2}[\phi,\phi] \equiv 0 \pmod{\calL^{i+1} \h}, \]
    so $[\phi]$ is a class in $\calF^k H_{-1}(\gr_\calL^i \h)$. We now consider the maps
    \small \[  \calF^k H_{-1}\left(\gr_\calL^i \h\right) \to \calF^k H_{-1}\left(\frac{\h}{\mathcal{L}^{i+1}\h}\right) \to \calF^k H_{-1}\left(\frac{\h}{\mathcal{L}^{i+1}\h+ \calF^{k+1}\h}\right) \xleftarrow{f} \calF^k H_{-1}\left(\frac{\h}{\mathcal{L}^{i+1}\calF^k\h+ \calF^{k+1}\h}\right).  \]
    \normalsize The last group is where the $k$-th intermediate gauge triviality class $\vartheta_{k}^i$ lives. But the image of this class in $\calF^k H_{-1}(\h/\mathcal{L}^{i+1}\h+ \calF^{k+1}\h)$ is equal to the image of $[\phi] \in \calF^k H_{-1}(\gr_\calL^i \h)$, which vanishes by assumption. Therefore, to show that all classes $\vartheta_k^i$ vanish, it is sufficient to show that the map $f$ is injective. Suppose we have $[\alpha] \in \ker(f)$. Then we can find $\lambda$ such that 
    \[ d^\psi \lambda \equiv \alpha \pmod{\calL^{i+1}\h + \calF^{k+1}\h}. \] Let us set $\lambda' = \lambda^{( k-1)} + \lambda^{( k)} + \cdots $ Since $d^\psi$ is homogeneous of weight one,  we must have
    \[ d^\psi \left( \lambda' \right) \equiv \alpha \pmod{\calL^{i+1}\h + \calF^{k+1}\h}, \]
    since $\alpha$ is in $\calF^k\h$, in other words, $\lambda'$ is a primitive of $\alpha$ in $\h/\mathcal{L}^{i+1}\calF^k\h+ \calF^{k+1}\h$.
\end{proof}

\subsection{Application to formality of properadic algebras}
\noindent 
Let us recall the approach of formality as a deformation problem, following \cite{PHC,Kaledin}. Let $\C$ be a reduced weight-graded dg coproperad, and as in \cref{def:coformality} we have the canonical morphism
\[ \Omega\calC \to \calP = \left. \calT(s^{-1}\calC^{(1)}) \middle/ d_{\Omega\calC}(s^{-1}\calC^{(2)}) \right. \]
Any chain complex $A$ and $\Omega\calC$-algebra structure $\varphi$ on $A$, its homology $H = H_*(A)$ has two $\Omega\calC$-algebra structures:
\begin{itemize}
    \item an \emph{induced} $\calP$-algebra structure $\varphi_*$, in other words, a $\Omega\calC$-algebra structure concentrated in weight one, and
    \item  a \emph{transferred} $\Omega\calC$-algebra structure $\varphi_t$ whose weight one component agrees with $\varphi_*$ and such that $(A,\varphi)$ and $(H,\varphi_t)$ are quasi-isomorphic as $\Omega\calC$-algebras.
\end{itemize}
As a consequence, the $\Omega\calC$-algebra $(A,\varphi)$ is formal if and only if there is a zig-zag of $\infty$-quasi-isomorphisms linking $(H,\varphi_*)$ and $(H,\varphi_t)$. By the invertibility of $\infty$-quasi-isomorphisms \cite[Theorem~4.18]{PHC}, this is equivalent to the existence of a single $\infty$-quasi-isomorphism, which, since $H$ has vanishing differential, is given by a $\infty$-isotopy up to an isomorphism of $H$.

An $\infty$-isotopy is an $\infty$-morphism of $\Omega\calC$-algebra structures whose first component is the identity. Recall the convolution dg Lie algebra
\[ \mathfrak{g}_H =\left(\Hom_{\mathbb{S}}\left(\overline{\C}, \End_H\right), \partial, \star\right), \] 
whose Maurer--Cartan elements are in bijection with $\Omega \calC$-algebra structures on $H$. By \cite[Proposition~1.1]{campos2025universal}, $\varphi_t$ and $\varphi_*$ are $\infty$-isotopic if and only if the corresponding Maurer--Cartan elements in $\mathfrak{g}_H$, also denoted $\varphi_*$ and $\varphi_t$, are gauge-equivalent. Putting together all of these results, we have the following characterization of formality and intrinsic formality.
\begin{proposition}\label{prop:equivalenceFormality}
    A $\Omega\calC$-algebra $(A,\varphi)$ is formal if and only if $\varphi_*$ and $\varphi_t$ are gauge equivalent. A $\calP$-algebra $(H,\psi)$ is intrinsically formal if and only if $\psi$ is rigid, in the sense of \cref{def:rigid}.
\end{proposition}

\noindent Suppose now that $\calC$ is a reduced weight-graded coproperad (with no differential) and let $H$ be a graded $R$-module. Then, the associated convolution dg Lie algebra $\g_{H}$ is a weight-graded Lie algebra in the sense of \cref{ass:gradings}(1), with the weight grading coming from that of $\C$, and associated filtration $\calF^N \g_H$. The dg Lie algebra $\mathfrak{g}_H$ always has an extra filtration where $\calL^i\g_H$ is all the operations with $(i+1)$ or more inputs. More precisely, the $(\S^{op}\times \S)$-module $\calC$ has a direct sum decomposition under a \emph{second} grading
\[ \calC = I \oplus \calC_{(1)} \oplus \calC_{(2)} \oplus \calC_{(3)} \oplus  \dots \]
where $\calC_{(\ell)}$ is spanned by the operations with $\ell$ outputs. This gives a decomposition
\[ \g_H = (\g_H)_{(1)} \times (\g_H)_{(2)} \times (\g_H)_{(3)} \times \dots \]
and we can then define the extra filtration by
\[ \calL^i\g_H = \prod_{\ell \geqslant i+1} (\g_H)_{(\ell)}. \]

\begin{theorem}
    Let $\C$ be a reduced weight-graded coproperad, such that the extra filtration $\calL$ on $\g_{H}$ is bounded with respect to $\calF$, satisfying (2) of \cref{ass:gradings}. Suppose that $(H, \gamma)$ be an $\Omega \C$-algebra structure on a graded $R$-module $H$. The following assertions are equivalent:
    \begin{enumerate}
        \item There exists an $\infty$-quasi-isomorphism between $(H, \gamma)$ and $\left(H, \gamma^{(1)} \right)$ 
        \item The Maurer--Cartan elements $\gamma$ and $\gamma^{(1)}$ are gauge equivalent in $\g_H$. 
        \item The gauge-triviality degree of $\phi = \gamma - \gamma^{(1)}$ is equal to $\infty$.
        \item For all $k \geqslant 1$, the $k$-th intermediate gauge triviality degree of $\phi$ is defined and equal to $\infty$.
    \end{enumerate}
\end{theorem}
\begin{proof}
    The equivalence $(1) \Leftrightarrow (2)$ is \cite[Proposition~1.1]{campos2025universal}, and the remaining equivalences are given by \cref{thm:gaugeTriviality}.
\end{proof}

\noindent When $\gamma$ and $\gamma^{(1)}$ are respectively the transferred and induced structures $\varphi_t$ and $\varphi_*$ on the homology of a dg $\Omega\calC$-algebra $(A,\varphi)$, one can then apply the theorem above to characterize the formality of $\varphi$. Similarly, we can use the rigidity criteria developed in the previous subsection to characterize intrinsic formality. Combining \cref{prop:intermediateToTotalRigid} and \cref{prop:equivalenceFormality}, we have the following result.
\begin{theorem}\label{thm:criterion}
    Let $\C$ be a reduced weight-graded coproperad, such that the extra filtration $\calL$ on $\g_{H}$ is bounded with respect to $\calF$, satisfying (2) of \cref{ass:gradings}, and $(H, \psi)$ be an $\Omega \C$-algebra structure concentrated in weight one (that is, a $\calP$-algebra structure), which is $\calL^0$-rigid. If the image of the map
    \[ \calF^2 H_{-1}(\gr_\mathcal{L}^i \g^{\psi}) \longrightarrow H_{-1}(\g^{\psi}/\mathcal{L}^{i+1}\g^{\psi}) \] 
    vanishes for every $i \geqslant 1$, then $(H,\psi)$ is intrinsically formal.
\end{theorem}

\section{\textcolor{bordeau}{Coformality of based loop spaces of spheres}}\label{sec:formalityLoopSpaces}
\noindent In this section, we use the formalism explained in the previous sections to study formality of $\calY^{(n)}_\infty$-algebra structures on the homology algebra of based loop spaces of spheres. That is, we consider $\Omega \calC$-structures where $\calC = \calY^{(n) \antish}$; for any chain complex $(A,d_A)$, the corresponding convolution dg Lie algebra $\g_A$ is given by
\begin{align*}
    \g_A &= \prod_{i \geqslant 2} \Hom(A[1]^{\otimes i},A)[1] \\
    &\times \prod_{ij\neq 0} \Hom(A[1]^{\otimes i} \otimes A[1]^{\otimes j}, A\otimes A)^{(2,n)}[n-1] \\
    &\times \prod_{\ell \geqslant 3} CH^*_{(\ell)}(A)^{(\ell,n)}[(n-2)(\ell-1)+1] \ .
\end{align*}
Recall that the superscript $(-)^{(\ell,n)}$ denotes the subcomplex of elements that have the appropriate cyclic symmetry. The dg Lie algebra $\g_A$ is the dg Lie subalgebra of $CH^*_{[n]}(A)[1]$ of operations with total arity (number of in- and outputs) greater than or equal to $3$, endowed with the necklace bracket, which we will just denote by $[-,-]$. The weight grading is given by the \emph{total number of legs} minus two. That is, the weight of a map $\varphi: A^{\otimes N_\mathrm{in}} \to A^{\otimes N_\mathrm{out}}$ is
\[ \wt(\varphi) = N_\mathrm{in} + N_\mathrm{out} - 2 \ . \] 
The extra filtration $\calL$ is bounded with respect to $\calF$, since $\calL^{i}$ is spanned by all operations with at least $(i+1)$-outputs, so for any $k$ we have 
\[ \calL^{k+1} \g_A \subseteq \calF^k \g_A, \]
and $\calL$ satisfies (2) of \cref{ass:gradings}. We note that the differential on every graded piece $\gr_\mathcal{L}^{\ell-1} \g^{\psi}_A$ is given by taking necklace bracket with the $A_\infty$-structure $\mu$, which is the usual differential on $CH^*_{(\ell)}(A)$. In other words, unraveling all the degree shifts, we find that for every $\ell \geqslant 3$ we have an isomorphism of complexes
\[ \gr_\mathcal{L}^{\ell-1} \g^{\psi}_A \cong CH^{-*+(n-2)(\ell-1)+1}_{(\ell)}(A)^{(\ell,n)} \]
and for $\ell = 2$ we have an injection
\[ \gr_\mathcal{L}^{\ell-1} \g^{\psi}_A \hookrightarrow CH^{-*+(n-2)(\ell-1)+1}_{(\ell)}(A)^{(\ell,n)} \]
given by the inclusion of elements of total arity $\geqslant 3$. 

\medskip
\noindent Note that $\calL^0\g_A/\calL^1\g_A$ is nothing but the convolution algebra governing $A_\infty$-algebras on $(A,d_A)$, since it has exactly the operations with one output.
\begin{proposition}\label{prop:AinftyRigidity}
    Let $(H,\psi)$ be a graded $\calY^{(n)}$-algebra, and $(H,\psi_{(1)})$ the corresponding associative algebra. If $(H,\psi_{(1)})$ is intrinsically formal as an associative algebra, then $(H,\psi)$ is $\calL^0$-rigid, in the sense of \cref{def:Lrigidity}.
\end{proposition}
\begin{proof}
    Let $\psi + \phi$ be a deformation of $\psi$ with $\phi \in \calF^2\g^\psi$. We decompose
    \[ \phi = \phi_{(1)} + \phi_{(2)} + \dots \]
    where $\phi_{(\ell)}$ has $\ell$ outputs. Note that $(\psi_{(1)} + \phi_{(1)})$ must be an $A_\infty$-structure. If $\psi_{(1)}$ is intrinsically formal as an associative algebra, then there is a gauge transformation trivializing $\phi_1$; so applying it to all of $\phi$ we get a gauge transformation sending it into an element in $\calL^2\g_H$.
\end{proof}

\medskip

\noindent We now turn to the specific example of based loop spaces of spheres. Recall the definition of coformality for a local Poincar\'e duality pair $(X,[X])$, given in \cref{def:coformality}.

\begin{theorem}\label{thm:sphereIntrinsicCoformality}
     For any $n \geqslant 1$, the pair $(S^n,[S^n])$ is intrinsically coformal.
\end{theorem}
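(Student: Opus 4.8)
## Proof strategy for Theorem \ref{thm:sphereIntrinsicCoformality}

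The plan is to apply the intrinsic formality criterion of \cref{critère} to the $\calY^{(n)}$-algebra $(H,\varphi_*)$ where $H = H_*(\Omega S^n, R)$, equipped with the canonical induced structure from \cref{prop:induced}. Recall that for $n \geqslant 2$, $H = H_*(\Omega S^n)$ is the free (tensor) algebra $R\langle x \rangle$ on a single generator $x$ in degree $n-1$, while for $n=1$, $\Omega S^1 \simeq \Z$ and $H = R[\Z]$ is the group ring (the case $n=1$ should either be handled separately by a direct argument, or folded in — the Poincaré duality structure is quite rigid there). So the bulk of the work is $n \geqslant 2$. There are three things to check in order to invoke \cref{critère}: first, that $\calL^0$-rigidity holds, i.e., that the associative algebra $H = R\langle x\rangle$ is intrinsically formal as an $A_\infty$-algebra; second, that the $\calY^{(n)}$-structure $\varphi_*$ is concentrated in weight one; and third, the vanishing of the image of $\calF^2 H_{-1}(\gr^i_\calL \g^{\varphi_*}) \to H_{-1}(\g^{\varphi_*}/\calL^{i+1}\g^{\varphi_*})$ for all $i \geqslant 2$.

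For $\calL^0$-rigidity: the free associative algebra $R\langle x \rangle$ on a single generator is intrinsically formal as an $A_\infty$-algebra. This is classical — the bar construction / Hochschild cohomology $HH^*(R\langle x\rangle)$ is concentrated in low cohomological degree (the algebra is Koszul of global dimension one, its bimodule resolution being $0 \to H\otimes x \otimes H \to H \otimes H \to H \to 0$), so the obstruction groups for deforming the $A_\infty$-structure vanish; one invokes the $A_\infty$ version of the rigidity criterion (the $\calL^0$-graded piece of the story, or equivalently \cite[Corollary~1.18]{CE24b}). Weight-one concentration of $\varphi_*$ follows from the explicit description: the induced structure has $m_{(1)}^2 = \mu$ (the concatenation product, weight $2+1-2=1$), $m_{(2)}^{1,0}$ of arity $(2;1)$ (weight $1+2-2=1$), and $m_{(3)}^{0,0,0}$ of arity $(3;0)$ (weight $0+3-2=1$); these are exactly the generators $\mu,\alpha,\beta$ of $\calY^{(n)}$, all of which are quadratic-dual-weight one. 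One must check that on $H = R\langle x\rangle$ the induced $\alpha$ and $\beta$ are genuinely the "basic" ones with no higher corrections — this is where \cref{prop:induced}'s canonicity and the explicit smallness of $H$ come in.

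The heart of the proof is the third condition, the vanishing of the obstruction maps for $i \geqslant 2$. Here $\gr^i_\calL \g^{\varphi_*}$ is computed by the isomorphism $\gr^\ell_\calL \g_A \cong CH^{-*+(n-2)(\ell-1)+1}_{(\ell)}(H)^{(\ell,n)}$ established just before the theorem, i.e., the $\ell$-higher Hochschild cochains of $H = R\langle x\rangle$ with their cyclic-symmetry constraint, with differential the bracket with $\mu$. So one needs to compute these higher Hochschild cochain complexes for the free algebra on one generator and show that $H_{-1}$ in the relevant internal degree either vanishes outright, or maps to zero in $H_{-1}(\g^{\varphi_*}/\calL^{i+1}\g^{\varphi_*})$. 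The free algebra is small enough that $CH^*_{(\ell)}(R\langle x\rangle)$ should be computable explicitly — for the ordinary Hochschild complex ($\ell = 1$) this is standard, and the higher ones are built from tensor products of bar-type complexes with a cyclic symmetrization. The key point I expect to exploit is a degree/weight count: the generator $x$ sits in a fixed degree $n-1$, the cyclic-invariance projector combined with the shift $(n-2)(\ell-1)+1$ forces the surviving classes into a narrow band, and the "extra" degree of freedom needed to produce a nonzero obstruction in $H_{-1}$ simply is not available.

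The main obstacle will be this last computation of the cyclically-(anti)symmetric higher Hochschild cohomology $H_*\bigl(CH^*_{(\ell)}(R\langle x\rangle)^{(\ell,n)}\bigr)$ in the precise internal degrees dictated by the $\calF$-filtration, and organizing it uniformly in $\ell$ and in the parity of $n$. One should expect to use the smallness of the bimodule resolution of $R\langle x\rangle$ to reduce each $CH^*_{(\ell)}$ to something like $\Hom$ out of a bounded complex, then handle the $\Z/\ell$-isotypic projection. A secondary subtlety is that for $\ell = 2$ one only has an injection $\gr^2_\calL \g_A \hookrightarrow CH^{-*+(n-2)}_{(2)}(H)^{(2,n)}$ (operations of total arity $\geqslant 3$), so the $i = 2$ case must be treated with this restriction in mind — but since the excluded arity-$(2;0)$ part is precisely the copairing which vanishes by construction, this should cause no real trouble. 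Once all obstruction classes are shown to vanish (equivalently map to zero), \cref{critère} delivers intrinsic formality of $(H,\varphi_*)$, which is by \cref{def:coformality} exactly intrinsic coformality of the pair $(S^n,[S^n])$.
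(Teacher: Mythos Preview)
Your overall framework is correct and matches the paper: apply \cref{critère} to $(H,\varphi_*)$ with $H = H_*(\Omega S^n) \cong R[t]$, check $\calL^0$-rigidity via intrinsic formality of the polynomial algebra, note that $\varphi_*$ has weight one, and then verify the vanishing condition on the obstruction maps. The first two checks are fine (and indeed $\beta$ vanishes for degree reasons, so $\varphi_* = \mu + \alpha$).

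The gap is in the third step. Your plan is to compute $CH^*_{(\ell)}(R[t])^{(\ell,n)}$ directly via the short bimodule resolution and then argue by a degree/weight count that the relevant $H_{-1}$ classes are simply absent. This expectation is wrong: there \emph{are} nonzero classes in $\calF^2 H_{-1}(\gr^i_\calL \g_H^{\varphi_*})$ for infinitely many $i$. What the paper does instead is use the Calabi--Yau duality: the element $\alpha$ and the weak CY class $\omega = 1[t]$ are inverse to each other, giving explicit quasi-isomorphisms $g_{(\ell)}\colon CH_*(H) \xrightarrow{\sim} CH^{-*+n\ell}_{(\ell)}(H)$. This reduces the computation of the higher Hochschild cohomologies to the well-known Hochschild homology $HH_*(R[t]) \cong H_*(\Lambda S^n)$, whose generators are $t^k$ and $t^k[t]$. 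After a symmetrization (this is where $\QQ$ enters essentially), the classes landing in the correct degree for $\calF^2 H_{-1}$ are precisely those coming from $t^k[t]$, and they are nonzero in $\gr^i_\calL$.

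The mechanism that kills them in the quotient $\g^{\varphi_*}/\calL^{i+1}$ is not the $[\mu,-]$-differential but the $[\alpha,-]$-part of the full twisted differential $[\varphi_*,-]$: the paper exhibits an explicit primitive one $\calL$-level down via
\[
\left[\alpha,\ \tfrac{1}{\ell-1}\sum_{i=0}^{\ell-1} \rho^i\bigl(g_{(\ell-1)}(t^{k+1})\bigr)\right] = g_{(\ell)}\bigl(1[t^{k+1}]\bigr),
\]
and $g_{(\ell)}(1[t^{k+1}])$ is cohomologous to a nonzero multiple of $g_{(\ell)}(t^k[t])$. Your proposal does not identify this $\alpha$-bracket mechanism at all, and a pure degree argument on the associated graded will not see it. Without this, the proof does not close --- and indeed the failure of this mechanism in characteristic two (where one cannot symmetrize to produce the cyclically symmetric primitive) is exactly what underlies \cref{thm:nonIntrinsic}.
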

\begin{proof}
    Let us first present the case $n \geqslant 2$. The relevant homology algebra is 
    \[H = H_*(\Omega S^n) \cong R[t] \ ,\]
    where $t$ has degree $(n-1)$. This is a smooth $n$-Calabi--Yau algebra, with weak CY structure represented by $\omega  = 1[t] \in CH_n(H)$ and compatible $n$-pre-CY structure given by
    \[ \psi = \mu + \alpha, \quad \mu \in CH^2(H), \quad \alpha \in CH^n_{(2)}(H), \]
    where $\mu$ is the usual multiplication on $H$, see \cite[Proposition~7.1]{rivera2023algebraic}. 
    Since $\psi$ is a $\calY^{(n)}$-algebra structure, it has homogeneous weight one in $\g_H$, and in terms of the extra filtration $\mathcal{L}$ we have 
    \[\mu \in \calL^0\g_H \quad \mbox{and} \quad \alpha \in \mathcal{L}^1\g_H \ .\]
    First we note that $H$ is an intrinsically formal associative algebra, so due to \cref{prop:AinftyRigidity} $\psi$ is $\calL^0$-rigid. Therefore, by \cref{thm:criterion}, we just have to prove that the image of the maps 
    \[ \calF^2 H_{-1}(\gr_\mathcal{L}^i \g_H^\psi) \to H_{-1}(\g_H^\psi /\mathcal{L}^{i+1}\g_H^\psi) \]
    vanish for each $i \geqslant 1$. The graded pieces of the $\mathcal{L}$ filtration, with the induced differential, give subcomplexes
    \[  \calF^2 \gr_\mathcal{L}^{\ell-1} \g_H^\psi \subset \left(CH^{* + n\ell -n -2\ell+3}_{(\ell)}(H), [\mu,-] \right) \]
    spanned by the vertices that are cyclically (anti)symmetric and have 4 total legs or more. Note that here we changed the index using $\ell = i+1$ so that $\ell$ denotes the number of output legs. In order to understand the cohomology of the right-hand side of the inclusion above, we use the fact that the elements $\alpha$ and $\omega$ are inverses of each other, in the sense of \cite{KTV2}; they give chain equivalences between $CH_*(H)$ and all the complexes $CH^*_{(\ell)}(H)$, up to appropriate shifts. Let us be more precise: for any $\ell \geqslant 1$, there is an explicit quasi-isomorphism
    \[ g_{(\ell)} \colon CH_*(H) \xrightarrow{\sim} CH^{-*+n\ell}_{(\ell)}(H) \]
    given by evaluating a certain diagram on some Hochschild chain $x$. For example, when $\ell=3$ this diagram is
    \[\tikzfig{
	\node [vertex] (x) at (0,0) {$x$};
	\node [bullet] (se) at (0.5,-0.87) {};
	\node [bullet] (ne) at (0.5,0.87) {};
	\node [bullet] (w) at (-1,0) {};
	\node [bullet] (t) at (-0.87,0.5) {};
	\draw [->-] (x) to (t);
	\draw [->-=1] (ne) to (1,1.74);
	\draw [->-=1] (se) to (1,-1.74);
	\draw [->-=1] (w) to (-2,0);
	\draw (1,0) arc (0:60:1);
	\draw (1,0) arc (0:-60:1);
	\draw [->-] (-0.5,0.87) arc (120:60:1);
	\draw [->-=0.8] (-0.5,0.87) arc (120:150:1);
	\draw [->-=0.8] (-0.87,0.5) arc (150:180:1);
	\draw [->-] (-0.5,-0.87) arc (-120:-60:1);
	\draw [->-] (-0.5,-0.87) arc (-120:-180:1);
	\node [vertex,fill=white] (e) at (1,0) {$\alpha$};
	\node [vertex,fill=white] (nw) at (-0.5,0.87) {$\alpha$};
	\node [vertex,fill=white] (sw) at (-0.5,-0.87) {$\alpha$};
        \node at (-2.3,0) {$1$};
        \node at (1.2,-2) {$2$};
        \node at (1.2,2) {$3$};
    }\]
    The diagrams for other values are similar, with $\ell$ copies of the $\alpha$ vertex around the circle. We now recall the Hochschild homology of $H$, in other words the homology of the free loop space $\Lambda S^n$, together with a choice of representative for the generator of each factor. When $n$ is odd, we have:
    \[
        \begin{array}{cccccccccccccccccccc}
            HH_*(H) & \cong & R & \oplus & R[n-1] & \oplus & R[n] & \oplus & R[2n-2] & \oplus & R[2n-1] & \oplus \cdots\\
             & & \rotatebox[origin=c]{90}{$\in$} & & \rotatebox[origin=c]{90}{$\in$} & & \rotatebox[origin=c]{90}{$\in$} & & \rotatebox[origin=c]{90}{$\in$} & & \rotatebox[origin=c]{90}{$\in$} \\
             & & 1 & & t & & 1[t] & & t^2 & & t[t] & 
        \end{array}
    \]
    and when $n$ is even, we have
    \[
        \begin{array}{cccccccccccccccccccc}
            HH_*(H) & \cong & R & \oplus & R[n-1] & \oplus & R[n] & \oplus & R[3n-3] & \oplus & R[3n-2] & \oplus \cdots\\
             & & \rotatebox[origin=c]{90}{$\in$} & & \rotatebox[origin=c]{90}{$\in$} & & \rotatebox[origin=c]{90}{$\in$} & & \rotatebox[origin=c]{90}{$\in$} & & \rotatebox[origin=c]{90}{$\in$} \\
             & & 1 & & t & & 1[t] & & t^3 & & t^2[t] & 
        \end{array}
    \]
    Let us study the cyclic symmetry properties of the elements in $CH^*_{(\ell)}(H)$ that are images of the representatives above. Let us denote by
    \[ \rho \colon CH^*_{(\ell)}(H) \to CH^*_{(\ell)}(H) \]
    the rotation automorphism; for an element $\xi$ to have the appropriate symmetry we must have $\rho(\xi) = (-1)^{(n-1)(\ell-1)} \xi$. Using a similar diagram to the above, and using the sign calculus explained in \cite{KTV1}, we find a homotopy between 
    \[  g_{(\ell)} \quad \text{and} \quad  (-1)^{(n-1)(\ell-1)} \rho \circ g_{(\ell)}. \] 
    Since $R$ is a $\QQ$-algebra, we can symmetrize, and for any $k$ and any closed element $x$ of $CH_*(H)$ we use the homotopy above to conclude that there is a homology equivalence
    \[ g_{(\ell)}(x) \simeq \frac{1}{\ell}\left( \sum_{i=0}^{\ell-1}  (-1)^{i(n-1)(\ell-1)} \rho^i (g_{(\ell)}(x)) \right), \]
    and therefore all the classes $[g_{(\ell)}(x)]$ can be represented by appropriately symmetric elements. Using the explicit formula for $\alpha$ and for the map $g_{(\ell)}$, we calculate that, for any $\ell$ and $k$,
    \begin{enumerate}
        \item the element $g_{(\ell)}(1[t^{k+1}])$ is already appropriately symmetric on the nose, and cohomologous to  $(k+1) g_{(\ell)}(t^k[t])$, since $1[t^{k+1}]$ is homologous to $(k+1) t^k[t]$, and 
        \item the following equation holds
        \[ \left[\alpha, \frac{1}{\ell-1}\left( \sum_{i=0}^{\ell-2} \rho^i (-1)^{i(n-1)(\ell-2)}(g_{(\ell-1)}(t^{k+1})) \right)\right] = g_{(\ell)}(1[t^{k+1}]). \]
    \end{enumerate}
    By counting degrees, we know that any homology class of $\gr_\mathcal{L}^{\ell-1} \g_H^\psi$ in degree $-1$ must be the image of a class of degree $(n+2\ell-4)$ in $HH_*(H)$ under the map $g_{(\ell)}$, thus a class whose homological degree has the same parity as $n$. These are exactly the classes represented by a multiple of $t^k[t]$ for some $k$. But the differential on $\g^\psi_H$ is given by taking commutator with $\psi = \mu + \alpha$, so $(1)$ and $(2)$ above imply that all these classes $g_{(\ell)}(t^k[t])$ vanish in the relevant group
    \[ H_{-1}\left(\g_H^\psi/\mathcal{L}^{\ell}\g_H^\psi\right),\] 
    and so we can apply \cref{thm:criterion}.

    \medskip
    
    \noindent In the remaining case $n=1$, where $H=k[t,t^{-1}]$, with $\deg(t)=0$. Here the compatible $\calY^{(n)}$-algebra structures are of the form $\psi = \mu + \alpha + \tau$,
    where $\tau$ is some element with zero inputs and three outputs, see for example the structure in \cite[Section~7.4]{rivera2023algebraic}. There will be many compatible $\calY^{(n)}$-algebra structures, as the dimension condition in \cref{prop:compatibleUniqueness} fails. Nevertheless, the same argument we presented above in the $n\ge 2$ case applies to any such structure, but even more simply. Since $HH_*(H)$ is concentrated in degrees zero and one when $n=1$, there can only be nontrivial classes of degree $(n+2\ell-4)$ in $HH_*(H)$ when $\ell = 2$. So the only classes in homological degree $-1$ in $\gr_\mathcal{L}^{\ell-1} \g_H^\psi$ are multiples of the classes $g_{(2)}(t^k[t])$, but all of these are weight one. We conclude that $\calF^2 H_{-1}(\gr_\mathcal{L}^{\ell-1} \g_H^\psi)$ vanishes for all $\ell \ge 2$ and we can again apply \cref{thm:criterion}.
\end{proof}

\bibliographystyle{abbrv}
\bibliography{bib}

\end{document}